\newcommand{\rrvert}{\vert}
\newcommand{\llvert}{\vert}
\newtheorem{theorem}{Theorem}
\newtheorem{pro}{Proposition}
\newtheorem{lem}{Lemma}
\newtheorem{corollary}{Corollary}
\newcommand{\id}{\operatorname{Id}}
\newcommand{\cb}{\operatorname{CB}}
\newcommand{\cbi}{\operatorname{CBI}}
\newcommand{\ivp}{\operatorname{IVP}}
\newcommand{\sip}{\mathbb{P}}
\newcommand{\se}{\mathbb{E}}
\newcommand{\ssa}{\mathscr{F}}
\newcommand{\si}{{\mathbf{1}}}
\newcommand{\eps}{\varepsilon}
\newcommand{\re}{\mathbb{R}}
\newcommand{\ra}{\mathbb{Q}}
\newcommand{\F}{\ssa}
\newcommand{\G}{\mathscr{G}}
\newcommand{\p}{\sip}
\begin{document}
\begin{frontmatter}

\title{A Lamperti-type representation of continuous-state branching
processes with immigration}
\runtitle{Lamperti transformation for CBI processes}

\begin{aug}
\author[A]{\fnms{M.~Emilia}~\snm{Caballero}\ead[label=e1]{marie@matem.unam.mx}},
\author[B]{\fnms{Jos\'e~Luis}~\snm{P\'erez~Garmendia}\ead[label=e2]{jlapg20@bath.ac.uk}}
\and
\author[A]{\fnms{Ger\'onimo}~\snm{Uribe~Bravo}\corref{}\thanksref{t2}\ead[label=e3]{geronimo@matem.unam.mx}\ead[label=u1,url]{http://www.matem.unam.mx/geronimo}}
\runauthor{M. E. Caballero, J. L. P\'erez Garmendia and G. Uribe Bravo}
\affiliation{Universidad Nacional Aut\'onoma de M\'exico, University
of Bath and
Universidad~Nacional Aut\'onoma de M\'exico}
\address[A]{M. E. Caballero\\
G. Uribe Bravo\\
Instituto de Matem\'aticas UNAM\\
\'Area de la investigaci\'on cient\'ifica\\
Circuito Exterior de Ciudad Universitaria\\
Distrito Federal CP 04510\\
M\'exico\\
\printead{e1}\\
\hphantom{E-mail: }\printead*{e3}\\
\printead{u1}}
\address[B]{J. L. P\'erez Garmendia\\
Department of Statistics\\
Instituto Tecnol\'ogico Aut\'onomo de M\'exico\\
Rio Hondo No.1, Col. Progreso Tizap\'an\\
Distrito Federal CP 01080\\
M\'exico} 
\end{aug}

\thankstext{t2}{Supported by a postdoctoral fellowship from
UC MexUS---CoNaCyt and NSF Grant DMS-08-06118.}

\received{\smonth{1} \syear{2011}}
\revised{\smonth{2} \syear{2012}}

%
\begin{abstract}
Guided by the relationship between the breadth-first walk of a rooted
tree and its sequence of generation sizes, we are able to include
immigration in the Lamperti representation of continuous-state
branching processes. We provide a representation of continuous-state
branching processes with immigration by solving a random ordinary
differential equation driven by a pair of independent L\'evy processes.
Stability of the solutions is studied and gives, in particular, limit
theorems (of a type previously studied by Grimvall, Kawazu and Watanabe
and by Li) and a simulation scheme for continuous-state branching
processes with immigration. We further apply our stability analysis to
extend Pitman's limit theorem concerning Galton--Watson processes
conditioned on total population size to more general offspring laws.
\end{abstract}

%
\begin{keyword}[class=AMS]
\kwd{60J80}
\kwd{60F17}
\end{keyword}
\begin{keyword}
\kwd{L\'evy processes}
\kwd{continuous branching processes with immigration}
\kwd{time-change}
\end{keyword}

\pdfkeywords{60J80, 60F17, Levy processes,
continuous branching processes with immigration,
time-change}

\end{frontmatter}

\section{Introduction}
\label{introSection}
\subsection{Motivation}
\label{MotivationSubsection}
In this document, we extend the Lamperti representation of continuous
state branching processes so that it allows immigration.
First, we will see how to find discrete (and simpler) counterparts to
our results in terms of the familiar Galton--Watson process with
immigration and its representation using two independent random walks.

Consider a genealogical structure with immigration such as the one
depicted in Figure~\ref{GSI}.
%
%
\begin{figure}

\includegraphics{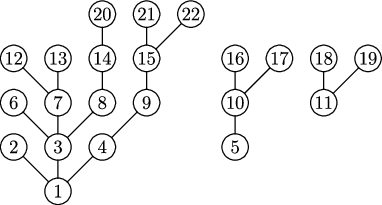}

\caption{A genealogical structure allowing immigration.}
\label{GSI}
\end{figure}
When ordering its elements in breadth-first order (with the accounting
policy of numbering immigrants after the established population in each
generation), $\chi_i$ will denote the number of children of individual~$i$. Define a first version of the breadth-first walk $\tilde
x= ( \tilde x_i )$ by
\[
\tilde x_0=0 \quad\mbox{and}\quad \tilde x_{i+1}=\tilde
x_i+\chi_{i+1}.
\]
Consider also the immigration process $y= ( y_n )_{n\geq0}$ where
$y_n$ is the quantity of immigrants arriving at generations less than
or equal to $n$ (not counting the initial members of the population as
immigrants). Finally, suppose the initial population has $k$ members.
If $c_n$ denotes the number of individuals of generations $0$ to $n$,
$c_{n+1}$ is obtained from $c_n$ by adding the quantity of sons of each
member of the $n$th generation plus the immigrants, leading to
\[
c_{n+1}=c_n+ ( \chi_{c_{n-1}+1}+\cdots+
\chi_{c_{n}} )+ ( y_{n+1}-y_n ).
\]
By induction we get
\[
c_{n+1}=k+\tilde x_{c_n}+y_{n+1}.
\]
Let $z_n$ denote the number of individuals of generation $n$ so that
$z_0=c_0=k$ and for $n\geq1$
\[
z_n=c_{n}-c_{n-1};
\]
if $\eta_i=\chi_i-1$, we can define a second version of the
breadth-first walk of the population by setting
\[
x_0=0 \quad\mbox{and}\quad x_i=x_{i-1}+
\eta_i
\]
(so that $x_i=\tilde x_i-i$). We then obtain
%
%
\begin{equation}
\label{discreteLampertiTransformation} z_{n+1}=k+x_{c_n}+y_{n+1}.
\end{equation}
This representation of the sequence of generation sizes $z$ in terms of
the breadth-first walk $x$ and the immigration function $y$ can be seen
as a \textit{discrete Lamperti transformation}.
It is the discrete form of the result we aim at analyzing. However, we
wish to consider a random genealogical structure which is not discrete.
Randomness will be captured by making the quantity of sons of
individuals an i.i.d. sequence independent of the i.i.d. sequence of
immigrants per generation, so that the model corresponds to a
Galton--Watson with immigration. Hence $x$ and $y$ would become two
independent random walks, whose jumps take values in $ \{
-1,0,1,\ldots\}$ and $ \{ 0,1,\ldots\}$,
respectively. Discussion of
nondiscreteness in the random genealogy model would take us far apart
[we are motivated by L\'evy trees with or without immigration,
discussed, e.g., by \citet
{MR1954248,MR1883717,MR2485021,MR2500236}]. We only mention that
continuum trees are usually defined through a continuum analogue of the
depth-first walk; our point of view is that generation sizes should be
obtained in terms of the continuum analogue of the breadth-first walk.
Indeed, in analogy with the discrete model, we just take $X$ and $Y$ as
independent L\'evy processes, the former without negative jumps (a
spectrally positive L\'evy process) and the latter with increasing
sample paths (a subordinator). The discrete Lamperti transformation of
(\ref{discreteLampertiTransformation}) then takes the form
%
%
\begin{equation}
\label{generalizedLampertiTransformation} Z_t=x+X_{\int_0^t Z_s \,ds}+Y_t.
\end{equation}
This should be the continuum version of a Galton--Watson process with
immigration, namely, the continuous-state branching processes with
immigration introduced by \citet{MR0290475}.
\subsection{Preliminaries}
\subsubsection{(Possibly killed) L\'evy processes}
A spectrally positive L\'evy process (spLp) is a stochastic process
$X= ( X_t )_{t\geq0}$ which starts at zero, takes values on
$(-\infty,\infty]$, has independent and stationary increments,
c\`adl\`ag paths, and no negative jumps. Such a process is characterized
by its Laplace exponent $\Psi$ by means of the formula
\[
\se\bigl( e^{-\lambda X_t} \bigr)=e^{t{\Psi( \lambda)}},
\]
where
\[
{\Psi( \lambda)}=-\kappa+a \lambda+\frac{\sigma^2\lambda^2}{2}+\int
_0^\infty
\bigl( e^{-\lambda x}-1+\lambda x\si_{x\leq
1} \bigr) {\nu( dx )}
\]
for $\lambda>0$; here $\nu$ is the so-called L\'evy measure on
$(0,\infty)$ and satisfies
\[
\int1\wedge x^2 {\nu( dx )}<\infty.
\]
The constant $\kappa$ will be for us the killing rate; a L\'evy
process with killing rate $\kappa$ can be obtained from one with zero
killing rate by sending the latter to $\infty$ at an independent
exponential time of parameter $\kappa$; $\sigma^2$ is called the
diffusion coefficient, while $a$ is the drift. 

We shall also make use of subordinators, which are spLp with increasing
trajectories. The Laplace exponent $\Phi$ of a subordinator $X$ is
defined as the negative of its Laplace exponent as a spLp, so
\[
\se\bigl( e^{-\lambda X_t} \bigr)=e^{-t{\Phi( \lambda
)}}.
\]
Since the L\'evy measure $\nu$ of a subordinator actually satisfies
\[
\int1\wedge x {\nu( dx )}<\infty,
\]
and subordinators have no Brownian component (i.e., $\sigma^2=0$), we
can write
\[
{\Phi( \lambda)}=\kappa+d\lambda+\int\bigl( 1-e^{-\lambda
x} \bigr) {\nu(
dx )}.
\]
So, we have the relationship
\[
-d=a+\int_0^1 x {\nu( dx )}
\]
between the parameters of $X$ seen as a spLp and as a subordinator.

\subsubsection{Continuous-state branching processes and the Lamperti
representation}
Continuous-state branching ($\cb$) processes are the continuous
time and space version of Galton--Watson processes. They were
introduced in different levels of generality by \citet{1958Jirina},
\citet{1967Lamperti} and \citet{1967Silverstein}. They are Feller
processes with state-space $[0,\infty]$ (with any metric that makes it
homeomorphic to $[0,1]$) satisfying the following branching property:
the sum of two independent copies started at $x$ and $y$ has the law of
the process started at $x+y$. The states $0$ and $\infty$ are
absorbing. The branching property can be recast by stating that the
logarithm of the Laplace transform of the transition semigroup is given
by a linear transformation of the initial state.

As shown by \citet{1967Silverstein}, $\cb$ processes are in one to one
correspondence with Laplace exponents of (killed) spectrally positive
L\'evy processes, which are called the branching mechanisms. In short,
the logarithmic derivative of the semigroup of a $\cb$ process at zero
applied to the function $x\mapsto e^{-\lambda x}$ exists and is equal
to $x\mapsto x{\Psi( \lambda)}$. The function $\Psi$
is the
called the \textit{branching mechanism} of the $\cb$ process and it is
the Laplace exponent of a spLp. A probabilistic form of this assertion
is given by \citet{1967LampertiCSBP} who states that if $X$ is a spLp
with Laplace exponent $\Psi$, and for $x\geq0$, we set $T$ for its
hitting time of $-x$,
\[
I_t=\int_0^t\frac{1}{x+X_{s\wedge T}} \,ds
\]
and $C$ equal to its right-continuous inverse, then
\[
Z_t=x+X_{C_{t\wedge T}}
\]
is a $\cb$ process with branching mechanism $\Psi$, or
${\cb( \Psi)}$. This does not seem to be directly
related to
(\ref{generalizedLampertiTransformation}). The fact that it is related
gives us what we think is the right perspective on the Lamperti
transformation and the generalization considered in this work. Indeed,\vadjust{\goodbreak}
as previously shown in Ethier and Kurtz [(\citeyear{MR838085}),
Chapter 6, Section 1], $Z$
is the only process satisfying
%
%
\begin{equation}
\label{ODEFormOfLampertiTransformation}
Z_t=x+X_{\int_0^t Z_s
\,ds},
\end{equation}
which is absorbed at zero. This is (\ref
{generalizedLampertiTransformation}) in the absence of immigration. To
see that a process satisfying (\ref{ODEFormOfLampertiTransformation})
can be obtained as the Lamperti transform of~$X$, note that if
$C_t=\int_0^t Z_s \,ds$, then while $Z$ has not reached zero, $C$ is
strictly increasing so that it has an inverse, say $I$, whose
right-hand derivative $I'_+$ is given by
\[
{I'_{+} ( t )}=\frac{1}{{C'_{+} ( I_t
)}}=\frac{1}{Z_{I_t}}=
\frac
{1}{x+X_{{C\circ I ( t )}}}=\frac{1}{x+X_t}.
\]

\subsubsection{Continuous-state branching processes with immigration}
Contin\-uous-state branching processes with immigration (or $\cbi$
processes) are the continuous time and space version of Galton--Watson
processes with immigration and were introduced by
\citet{MR0290475}. They are Feller processes with state-space
$[0,\infty]$ such that the logarithm of the Laplace of the transition
semigroup is given by an affine transformation of the initial state.
[They thus form part of the affine processes studied by
\citet{MR2243880}.] As shown by \citet{MR0290475}, they are
characterized by the Laplace exponents of a spLp and of a subordinator:
the logarithmic derivative of the semigroup of a $\cb$ process at zero
applied to the function $x\mapsto e^{-\lambda x}$ exists and is equal
to the function
\[
x\mapsto x{\Psi( \lambda)}-{\Phi( \lambda)},
\]
where $\Psi$ is the Laplace exponent of a spLp and $\Phi$ is the
Laplace exponent of a subordinator. They are, respectively, called the
\textit{branching and immigration mechanisms} and characterize the
process which is therefore named ${\cbi( \Psi,\Phi)}$.

We aim at a probabilistic representation of $\cbi$ processes in the
spirit of the Lamperti representation.

\subsection{Statement of the results}

We propose to construct a ${\cbi( \Psi,\Phi)}$ that
starts at
$x$ by solving the functional equation
%
%
\begin{equation}
\label{genLampRepDef} Z_t=x+X_{\int_0^t Z_s \,ds}+Y_t.
\end{equation}
We call such a process $Z$ the \textit{Lamperti transform} of $ (
X,x+Y )$ and denote it by $Z={L ( X,x+Y )}$; however,
the first thing
to do is to show that there exists a unique process which satisfies
(\ref{genLampRepDef}). When $Y$ is zero, a particular solution to
(\ref{genLampRepDef}) is the Lamperti transform of $X+x$ recalled
above. Even in this case there could be many solutions to (\ref
{genLampRepDef}), in clear contrast to the discrete case where one can
proceed recursively to construct the unique solution.
Our stepping stone for the general analysis of (\ref{genLampRepDef})
is the following partial result concerning existence and uniqueness
proved in Section~\ref{ODESection}.\vadjust{\goodbreak}

A pair of c\`adl\`ag functions $ ( f,g )$ such that $f$ has no
negative jumps, $g$ is nondecreasing and ${f ( 0 )}+{g
( 0 )}\geq
0$ is termed an \textit{admissible breadth-first pair}; $f$ and $g$
will be termed the \textit{reproduction and immigration functions},
respectively. When $g$ is constant, we say that $f+g$ is
\textit{absorbed at zero} if ${f ( x )}+g=0$ implies ${f (
y )}+g=0$ for
all $y>x$.
%
%
\begin{theorem}
\label{ExistenceTheorem}
Let $ ( f,g )$ be an admissible breadth-first pair. There
exists a
nonnegative $h$ satisfying the equation
\[
{h ( t )}={f \biggl( \int_0^t {h ( s )} \,ds
\biggr)}+{g ( t )}.
\]
Furthermore, the solution is unique when $g$ is strictly increasing,
when $f+{g ( 0 )}$ is a strictly positive function or when
$g$ is
constant and $f+g$ is absorbed at zero.
\end{theorem}

In the context of Theorem~\ref{ExistenceTheorem}, much is gained by
introducing the function $c$ given by
\[
{c ( t )}=\int_0^t {h ( s )} \,ds,
\]
which has a right-hand derivative $c'_+$ equal to $h$. This is because
the functional equation for $h$ can then be recast as the initial value problem
\[
{\ivp( f,g )}= \cases{ c'_+=f\circ c+g,
\cr
{c ( 0 )}=0.}
\]

Our forthcoming approximation results for the function $h$ of Theorem~\ref{ExistenceTheorem} rely on the study of a functional inequality.
Let $ ( f,g )$ be an admissible breadth-first pair. We will be
interested in functions $c$ which satisfy
%
%
\begin{eqnarray}
\label{IVPWithInequalities}
&&\int_s^t {f_-
\circ c ( r )}+{g ( r )} \,dr
\leq{c ( t )}-{c ( s )}\leq\int_s^t
{f\circ c ( r )}+{g ( r )} \,dr\nonumber\\[-8pt]\\[-8pt]
&&\eqntext{\mbox{for $s\leq t$.}}
\end{eqnarray}
Note that any solution $c$ to ${\ivp( f,g )}$ satisfies
(\ref
{IVPWithInequalities}): the second inequality is actually an equality by
definition of ${\ivp( f,g )}$, and since $f\geq f_-$ as
$f$ has no
negative jumps, we get the first inequality. Hence, the functional
inequality (\ref{IVPWithInequalities}) admits solutions. Regarding
uniqueness, if the solution to (\ref{IVPWithInequalities}) is unique,
then the solution to ${\ivp( f,g )}$ is unique, and since the
latter is nonnegative and nondecreasing, so is the former.
Also, similar sufficient conditions for uniqueness of ${\ivp(
f,g )}$ of Theorem~\ref{ExistenceTheorem} imply uniqueness of
nondecreasing solutions of the functional inequality~(\ref
{IVPWithInequalities}).
%
%
\begin{pro}
\label{UniquenessForIVPWithInequalitiesProposition}
Let $ ( f,g )$ be an admissible breadth-first pair. If
either $g$
is strictly increasing, $f_-+{g ( 0 )}$ is strictly
positive or $g$
is constant and $f_-+{g ( 0 )}$ is absorbed at zero, then
(\ref
{IVPWithInequalities}) has an unique nondecreasing solution starting
at zero.\vadjust{\goodbreak}
\end{pro}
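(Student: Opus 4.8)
The plan is to reduce everything to Theorem~\ref{ExistenceTheorem}. Existence is immediate: the solution $c$ to $\ivp(f,g)$ provided by that theorem has right-derivative $h\geq 0$, hence is non-decreasing with $c(0)=0$, and, since $f\geq f_-$, it satisfies \eqref{IVPWithInequalities} (as already noted). For uniqueness I would show that, under each of the three hypotheses, \emph{every} non-decreasing solution $c$ of \eqref{IVPWithInequalities} with $c(0)=0$ is in fact a solution of $\ivp(f,g)$; the matching clause of Theorem~\ref{ExistenceTheorem} then gives uniqueness.

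First I would record two facts about such a $c$. Since $f$ and $g$ are \cadlag\ (hence locally bounded) and $c$ is non-decreasing, both sides of \eqref{IVPWithInequalities} are absolutely continuous in $t$, so $c$ is locally Lipschitz, $c(t)=\int_0^t c'(r)\,dr$, and $f_-(c(t))+g(t)\leq c'(t)\leq f(c(t))+g(t)$ for a.e.\ $t$. Secondly --- and here the absence of negative jumps of $f$ is used --- $c$ cannot reach a level $a$ for the first time from below, at a time $\sigma$, unless $f_-(a)+g(\sigma-)\geq 0$: as $r\uparrow\sigma$ one has $c(r)\uparrow a$ with $c(r)<a$, so $f(c(r))\to f_-(a)$, and dividing the upper bound of \eqref{IVPWithInequalities} on $(r,\sigma)$ by $\sigma-r$ forces the nonnegative quantity $(c(\sigma)-c(r))/(\sigma-r)$ to be dominated by something converging to $f_-(a)+g(\sigma-)$.

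When $g$ is strictly increasing or $f_-+g(0)$ is strictly positive, $c$ can have no interval of constancy: on such an interval, at level $a$, one would need $f_-(a)+g\leq 0\leq f(a)+g$ a.e., which contradicts $f_-+g(0)>0$ directly, and contradicts strict monotonicity of $g$ because the second fact (applied at the left endpoint $\sigma$ of the interval, together with $g$ increasing strictly past $\sigma$) forces $f_-(a)+g>0$ on the interval --- the possibility $a=0$ being ruled out by admissibility. Thus $c$ is strictly increasing, hence injective; as the jump set of $f$ is countable and each $\{t:c(t)=a\}$ is a singleton, $f\circ c=f_-\circ c$ Lebesgue-a.e., so $c'=f\circ c+g$ a.e., and since $f\circ c+g$ is right-continuous this upgrades to $c'_+=f\circ c+g$ everywhere, i.e.\ $c$ solves $\ivp(f,g)$; Theorem~\ref{ExistenceTheorem} applies (through $f+g(0)\geq f_-+g(0)>0$ in the second case).

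The case $g\equiv g_0$ constant with $f_-+g_0$ absorbed at zero is the one I expect to be the main obstacle, because $c$ may genuinely have intervals of constancy and the hypothesis is phrased via $f_-$, not $f$. If $c$ is constant on a non-degenerate interval at level $\ell$, then $f_-(\ell)+g_0\leq 0$ (from $c'=0$ there) and $f_-(\ell)+g_0\geq 0$ (from the second fact, or admissibility if $\ell=0$), so $f_-(\ell)+g_0=0$; absorption then gives $f_-\equiv -g_0$ on $(\ell,\infty)$, whence, $f$ having no negative jumps and being right-continuous, $f\equiv -g_0$ on $[\ell,\infty)$ and $c$ is absorbed at $\ell$. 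Hence $c$ is strictly increasing up to a (possibly infinite) absorption time and constant afterwards, with $c'_+=f\circ c+g_0$ in both regimes, so $c$ solves $\ivp(f,g_0)$. To conclude via Theorem~\ref{ExistenceTheorem} I would check that admissibility plus ``$f_-+g_0$ absorbed at zero'' forces ``$f+g_0$ absorbed at zero'': if $f(x)+g_0=0$ but $f_-(x)+g_0\neq 0$, then $f_-(x)+g_0<0$, and since $f_-$ is left-continuous with right limits and has only non-negative jumps while $f_-(0)+g_0\geq 0$, it must vanish at some $x_0<x$ with $f_-+g_0<0$ just to the right of $x_0$, contradicting absorption applied at $x_0$; so $f_-(x)+g_0=0$, and absorption together with right-continuity of $f$ gives $f\equiv -g_0$ on $[x,\infty)$, which is what is needed.
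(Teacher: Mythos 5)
Your proof is correct and follows essentially the same route as the paper: under each hypothesis you show that any non-decreasing solution of \eqref{IVPWithInequalities} is strictly increasing (up to an absorption level in the constant-immigration case), so that the two bounds coincide and $c$ solves $\imf{\ivp}{f,g}$, after which the uniqueness clause of Theorem \ref{ExistenceTheorem} applies. You are in fact somewhat more explicit than the paper in two places it leaves implicit --- the countable-jump-set argument showing $f\circ c=f_-\circ c$ a.e.\ when $c$ is injective, and the verification that ``$f_-+\imf{g}{0}$ absorbed at zero'' forces ``$f+\imf{g}{0}$ absorbed at zero'' --- both of which are needed and correctly supplied.
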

However, as is shown in Section
\ref{ProofOfAnalyticAssertionsSubsection}, assuming that (\ref
{IVPWithInequalities}) admits an unique solution is stronger than just
assuming that ${\ivp( f,g )}$ has an unique solution.

As a consequence of the analytic Theorem~\ref{ExistenceTheorem}, we
solve a probabilistic question raised by
Lambert (\citeyear{LambertQProcess,MR2299923}).
%
%
\begin{corollary}
\label{LambertProblemCorollary}
Let $X$ be a spectrally positive $\alpha$-stable L\'evy process. For
any c\`adl\`ag and strictly increasing process $Y$ independent of $X$,
there is weak existence and uniqueness for the stochastic differential equation
%
%
\begin{equation}
\label{SDEStablespLp} Z_t=x+\int_0^t
\llvert Z_s\rrvert^{1/\alpha} \,dX_s+Y_t.
\end{equation}
\end{corollary}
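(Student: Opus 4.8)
The plan is to deduce the corollary from the pathwise construction of Theorem~\ref{ExistenceTheorem} together with the dictionary, available for spectrally positive $\alpha$-stable L\'evy processes, between time-changes by absolutely continuous clocks and stochastic integrals with nonnegative integrands. Recall the scaling identity $\paren{X_{kt}}_{t\geq 0}\overset{d}{=}\paren{k^{1/\alpha}X_t}_{t\geq 0}$, which at the level of stochastic calculus takes the following form: for a nonnegative predictable $H$ with $\int_0^t H_s^\alpha\,ds<\infty$, the process $M_t=\int_0^t H_s\,dX_s$ is a time-change of a spectrally positive $\alpha$-stable L\'evy process $\widehat X$, namely $M_t=\widehat X_{A_t}$ with $A_t=\int_0^t H_s^\alpha\,ds$; conversely, if $A_t=\int_0^t a_s\,ds$ is a continuous adapted absolutely continuous clock, then $X_{A_t}=\int_0^t a_s^{1/\alpha}\,dX'_s$ for a spectrally positive $\alpha$-stable $X'$ (both statements possibly requiring an enlargement of the probability space). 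Reducing everything to this equivalence is the main work, the delicate point being to keep the innovation process independent of $Y$, which I would handle by conditioning on $Y$.

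For weak existence, take $X$ a spectrally positive $\alpha$-stable L\'evy process independent of $Y$ and let $Z=\imf{L}{X,x+Y}$ be the solution of $\imf{\ivp}{X,x+Y}$ produced, on a set of full probability, by Theorem~\ref{ExistenceTheorem}; the relevant hypothesis there holds because $x+Y$ is strictly increasing, so the solution is unique and, being a measurable functional of $\paren{X,Y}$, defines a bona fide process. With $C_t=\int_0^t Z_s\,ds$ one has $Z_t=x+X_{C_t}+Y_t$, $C$ absolutely continuous with $C'=Z\geq 0$, so the converse half of the dictionary gives $X_{C_t}=\int_0^t Z_s^{1/\alpha}\,dX'_s$ for a spectrally positive $\alpha$-stable $X'$; since $Z\geq 0$ we may write $\abs{Z_s}^{1/\alpha}$, and $Z$ solves \eqref{SDEStablespLp}. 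To see that $X'$ can be taken independent of $Y$, condition on $Y=\gamma$ for a fixed strictly increasing \cadlag\ path $\gamma$: given this, the entire construction lives on the probability space of $X$ alone and the conditional law of $X'$ is that of a spectrally positive $\alpha$-stable process regardless of $\gamma$, whence $X'\perp Y$.

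For weak uniqueness, let $\paren{Z,X,Y}$ be any weak solution of \eqref{SDEStablespLp}, with $Y$ of the prescribed law and independent of $X$. Since $X$ has no negative jumps and $Y$ is nondecreasing, $Z$ has no negative jumps, and wherever $Z$ reaches $0$ the integrand $\abs{Z}^{1/\alpha}$ vanishes while $Y$ pushes upward, so a comparison/localisation argument (as in the $Y\equiv 0$ case) shows $Z\geq 0$; hence $\int_0^t Z_s\,ds<\infty$ and the integrand is $Z_s^{1/\alpha}$. Applying the first half of the dictionary to $M_t=\int_0^t Z_s^{1/\alpha}\,dX_s$ (so $H_s=Z_s^{1/\alpha}$, $H_s^\alpha=Z_s$) yields a spectrally positive $\alpha$-stable $\widehat X$ with $M_t=\widehat X_{A_t}$ and $A_t=\int_0^t Z_s\,ds$; thus $Z$ solves $\imf{\ivp}{\widehat X,x+Y}$. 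Conditioning on $Y$ as above shows $\widehat X$ is spectrally positive $\alpha$-stable and independent of $Y$, and by the uniqueness in Theorem~\ref{ExistenceTheorem} (again $x+Y$ is strictly increasing) $Z$ is the \emph{same} measurable functional of $\paren{\widehat X,Y}$ used in the existence proof. Since the joint law of $\paren{\widehat X,Y}$ is fixed by the data, so is the law of $Z$.

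I expect the main obstacle to be the rigorous form of the time-change\,$\leftrightarrow$\,stochastic-integral dictionary for spectrally positive $\alpha$-stable processes: for $\alpha=2$ it is the Dambis--Dubins--Schwarz theorem, and for $\alpha\in(1,2)$ it rests on a martingale/jump-measure representation forcing the innovation to be $\alpha$-stable, so matching predictable compensators exactly and controlling the possible space enlargement (and checking that it does not spoil independence from $Y$) is where the care lies. A secondary point is the measurability of the solution map $\paren{X,Y}\mapsto Z$ coming from Theorem~\ref{ExistenceTheorem}, which I would extract from uniqueness together with the approximation results developed for \eqref{IVPWithInequalities}.
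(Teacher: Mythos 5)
Your proposal is correct and follows essentially the same route as the paper: the paper also obtains existence by applying the time-change/stochastic-integral dictionary (Theorems 4.1 and 4.2 of Kallenberg's work on time-change representations of stable integrals) to the Lamperti transform $\imf{L}{X,x+Y}$, obtains uniqueness by running the dictionary in the other direction and invoking the uniqueness in Theorem \ref{ExistenceTheorem} for strictly increasing $x+Y$, and handles the independence of the innovation process from $Y$ by conditioning on $Y$. The "main obstacle" you identify is exactly what the cited Kallenberg theorems supply.
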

When $X$ is twice a Brownian motion and $Y_t=\delta t$ for some $\delta
>0$, this might be one of the simplest proofs available of weak
existence and uniqueness of the SDE defining squared Bessel processes,
since it makes no mention of the Tanaka formula or local times; it is
based on Knight's theorem and Theorem~\ref{ExistenceTheorem}. When $X$
is a Brownian motion and $dY_t={b ( t )} \,dt$ for some
Lipschitz and
deterministic ${b\dvtx[0,\infty)\to[0,\infty)}$, \citet{MR770393}
actually proves pathwise uniqueness through a local time argument. Our
result further shows that if $b$ is measurable and strictly positive,
then there is weak uniqueness.
In the case $Y$ is an $ ( \alpha-1 )$-stable subordinator
independent of $X$, we quote Lambert
(\citeyear{LambertQProcess,MR2299923}):

\begin{quote}
\textit{\ldots whether or not uniqueness holds for} (\ref{SDEStablespLp})
\textit{remains an open question.}
\end{quote}

\noindent Corollary~\ref{LambertProblemCorollary} answers affirmatively. Note
that when $Y=0$, the stated result follows from \citet{MR1905857},
and is handled by a time-change akin to the Lamperti transformation.
\citet{MR2584896} obtain strong existence and pathwise uniqueness
for a different kind of SDE related to $\cbi$ processes with stable
reproduction and immigration.

Regarding solutions to (\ref{genLampRepDef}), Theorem \ref
{ExistenceTheorem} is enough to obtain the process $Z$ when the
subordinator $Y$ is strictly increasing. When $Y$ is compound Poisson,
a solution to (\ref{genLampRepDef}) can be obtained by pasting
together Lamperti transforms.
However, further analysis using the pathwise behavior of $X$ when $Y$
is zero or compound Poisson implies the following result.
%
%
\begin{pro}
\label{uniquenessForLevy}
Let $x\geq0$, $X$ be a spectrally positive L\'evy process and $Y$ an
independent subordinator. Then there is a unique c\`adl\`ag process $Z$
which satisfies
\[
Z_t=x+X_{\int_0^t Z_s \,ds}+Y_t.
\]
The above equation is satisfied by any c\`adl\`ag process $Z$ satisfying
the functional inequality
\[
x+X_{\int_0^t Z_s \,ds-}+Y_{t}\leq Z_t\leq
x+X_{\int_0^t Z_s \,ds}+Y_t,
\]
which also has a unique solution.
\end{pro}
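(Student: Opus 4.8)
The plan is to handle, for a.e.\ fixed trajectory, the two possible types of the subordinator $Y$, reducing the hard case to a pathwise uniqueness statement for the equation without immigration. Suppose first that $Y$ is strictly increasing (this covers every $Y$ with positive drift and every $Y$ with infinite L\'evy measure). For each $\ooo$ the pair $\paren{X(\ooo),x+Y(\ooo)}$ is an admissible breadth-first pair whose immigration function is strictly increasing, so Theorem \ref{ExistenceTheorem} provides a unique solution $h^\ooo$ of $h(t)=X_{\int_0^t h(s)\,ds}(\ooo)+x+Y_t(\ooo)$, which is exactly \eqref{genLampRepDef}; setting $Z_t(\ooo)=h^\ooo(t)$ defines the desired process, joint measurability coming from the approximative construction of $h^\ooo$ in the proof of Theorem \ref{ExistenceTheorem}. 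If $Z$ is any \cadlag\ process satisfying the stated functional inequality, then $c=\int_0^\cdot Z_s\,ds$ satisfies \eqref{IVPWithInequalities} for $\paren{f,g}=\paren{X(\ooo),x+Y(\ooo)}$ (integrate the inequality and use $Z=c'_+$), so Proposition \ref{UniquenessForIVPWithInequalitiesProposition} forces $c$, hence $Z=c'_+$, to agree with the solution above; in particular $Z$ solves \eqref{genLampRepDef}.

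The remaining possibility is that $Y$ is compound Poisson (this includes $Y\equiv 0$), with jump times $0<\tau_1<\tau_2<\cdots$, $\tau_n\uparrow\infty$, a possible killing meaning merely that at some $\tau_n$ the processes $Y$ and $Z$ are sent to the absorbing state $\infty$. The building block is the claim that, for a.e.\ $\ooo$ and every $x\ge 0$, the equation $Z_t=x+X_{\int_0^t Z_s\,ds}$ has a unique \cadlag\ solution, equal to the Lamperti transform of $X+x$ recalled in the introduction. If $X$ is a subordinator the statement is elementary (either $T:=\inf\set{u:X_u=-x}=\infty$, or $x=0$ and a Gronwall estimate using $X_u/u\to d$ forces $Z\equiv 0$), so assume $X$ is not a subordinator. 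Note $X_T=-x$ since $X$ has no negative jumps. Let $Z$ be a \cadlag\ solution, $C_t=\int_0^t Z_s\,ds$. While $C_t<T$ one has $X_{C_t}>-x$, hence $Z_t>0$ and $C$ strictly increasing; inverting ($I'_+=1/(x+X)$ on $[0,T)$) shows $C$, and therefore $Z$, is uniquely determined there and equals the Lamperti transform. Put $t_0=\inf\set{t:C_t=T}$; the equation gives $Z_{t_0}=x+X_{C_{t_0}}=x+X_T=0$. Finally $Z\equiv 0$ on $[t_0,\infty)$: on any interval $[t_0,\beta]$ on which $Z$ vanishes, $C\equiv T$; if such an interval were maximal with $\beta<\infty$ then $Z_\beta=x+X_T=0$, and for $Z$ to become positive right after $\beta$ the time change $C$ would have to exceed $T$; but a spectrally positive L\'evy process that is not a subordinator enters $(-\infty,-x)$ at times arbitrarily close to $T$ from the right, so $x+X_{C_t}$ would be strictly negative at times arbitrarily close to $\beta$, contradicting positivity of $Z$ there. (An explosion $C_{t_0-}=\infty$ in finite time is dealt with by setting $Z=\infty$ afterwards.)

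To pass from the building block to the compound Poisson case, argue inductively on the intervals $[\tau_i,\tau_{i+1})$. On $[0,\tau_1)$ the equation is the one just treated, so $Z$ is unique there, and the equation then determines $Z_{\tau_1}$. On $[\tau_1,\tau_2)$, with $\hat X_u=X_{C_{\tau_1}+u}-X_{C_{\tau_1}}$, $\hat x=Z_{\tau_1}$ and $\hat Z_t=Z_{\tau_1+t}$, one obtains $\hat Z_t=\hat x+\hat X_{\int_0^t\hat Z_s\,ds}$; conditionally on $Y$ the time $\tau_1$ is deterministic and $C_{\tau_1}$ — being the right-inverse of the additive functional $u\mapsto\int_0^u ds/(x+X_{s\wedge T})$ evaluated at $\tau_1$ — is a stopping time of $X$, so by the strong Markov property $\hat X$ is again a spectrally positive L\'evy process and the building block applies to $\hat Z$. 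Induction gives uniqueness on $[0,\infty)$, and the same pasting produces a solution. The functional inequality is handled in the same way: on each interval where $Y$ is constant the pathwise argument above applies verbatim to the two-sided inequality (if $C_t<T$ then $X_{C_t}>-x$ and already the lower bound gives $Z_t>0$), while on the strictly increasing case one invokes Proposition \ref{UniquenessForIVPWithInequalitiesProposition}; in particular every \cadlag\ solution of the inequality also solves the equation.

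The main obstacle is the building block, specifically its last step: ruling out \cadlag\ solutions that leave $0$ once the time change has reached the first passage time $T$ of $-x$ by $X$. This uses essentially that a spectrally positive L\'evy process which is not a subordinator immediately visits $(-\infty,-x)$ after hitting $-x$, and making this path property interact with the continuity of $C$ and the \cadlag\ regularity of $Z$ — so as to exclude oscillatory escapes from $0$ — is the delicate point. It is also why the assertion about the functional inequality is not automatic, consistently with the remark following \eqref{IVPWithInequalities} that uniqueness for the inequality is strictly stronger than uniqueness for $\imf{\ivp}{f,g}$.
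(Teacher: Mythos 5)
Your overall strategy coincides with the paper's: dispose of strictly increasing $Y$ via Theorem \ref{ExistenceTheorem} and Proposition \ref{UniquenessForIVPWithInequalitiesProposition}, and reduce the compound Poisson case to pathwise uniqueness on the constancy intervals of $Y$, the key input being that a spectrally positive L\'evy process which is not a subordinator immediately enters $(-\infty,-x)$ after first hitting $-x$ (regularity of $0$ for $(-\infty,0)$ plus the strong Markov property); your Gronwall treatment of the subcase where $X$ is a subordinator and $x=0$ is a legitimate variant of the paper's argument via $\int_{0+}ds/X_s=\infty$. One formulation should be repaired: the building block as stated, ``for a.e.\ \ooo\ and \emph{every} $x\geq 0$,'' is too strong, since the path property you invoke fails simultaneously in $x$ at the countably many random levels where the running infimum of $X$ has a constancy interval. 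What saves the induction is exactly what you say later: the levels actually needed are $\sigma\paren{Y}$-measurable, so conditioning on $Y$ and applying the strong Markov property at the stopping time $C_{\tau_i}$ reduces each step to the fixed-level statement, which does hold almost surely. This is an imprecision rather than a flaw, but the quantifier must be weakened accordingly.

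The genuine gap is in the functional-inequality part on constancy intervals of $Y$. You assert the pathwise argument applies ``verbatim'' because ``if $C_t<T$ then $X_{C_t}>-x$ and already the lower bound gives $Z_t>0$''; but the lower bound involves the \emph{left limit} of $X$ at $C_t$, not $X_{C_t}$, and for $u<T=\inf\set{u:X_u=-x}$ one only gets $\imf{X_-}{u}\geq -x$, not a strict inequality. If the left-limit process reaches $-x$ strictly before $X$ does, i.e.\ if $T_{0+}:=\inf\set{u:x+\imf{X_-}{u}=0}<T$, then at the time $t_1$ with $C_{t_1}=T_{0+}$ the two-sided inequality reads $0\leq Z_{t_1}\leq x+X_{T_{0+}}$ with a strictly positive upper bound, so the inequality admits a spurious solution that stalls at $T_{0+}$ forever, and your uniqueness claim for the inequality fails pathwise on this event. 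The event must be ruled out, and the paper does so by quasi-left-continuity of $X$ along the announcing stopping times $T_\eps=\inf\set{u:x+X_u<\eps}\uparrow T_{0+}$, which forces $x+X_{T_{0+}}=0$ almost surely and hence $T_{0+}=T$. This step is not cosmetic: it is precisely the point at which uniqueness for the inequality is genuinely stronger than uniqueness for $\imf{\ivp}{f,g}$, as your own closing remark anticipates, so it needs to be supplied.
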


Our main result, a pathwise construction of a ${\cbi( \Psi
,\Phi)}$, is the following.

%
\begin{theorem}
\label{CBIRepThm}
Let $X$ be a spectrally positive L\'evy process with Laplace exponent
$\Psi$ and $Y$ an independent subordinator with Laplace exponent $\Phi
$. The unique stochastic process $Z$ which solves
\[
Z_t=x+X_{\int_0^t Z_s \,ds}+Y_t
\]
is a ${\cbi( \Psi,\Phi)}$ that starts at $x$.
\end{theorem}

We view Theorems~\ref{ExistenceTheorem} and~\ref{CBIRepThm} as a
first step in the construction of branching processes with immigration
where the immigration can depend on the current value of the
population. One generalization would be to consider solutions to
\[
Z_t=x+X_{\int_0^t {a ( s,Z_s )} \,ds}+Y_{\int_0^t {b
( s,Z_s )} \,ds},
\]
where $a$ is interpreted as the breeding rate, and $b$ as the rate at
which the arriving immigration is incorporated into the population. For
example, \citet{MR2500236} consider a continuous branching process
where immigration is proportional to the current state of the
population. This could be modeled by the equation
\[
Z_t=x+X_{\int_0^t Z_s \,ds}+Y_{\int_0^t \alpha Z_s \,ds},
\]
which, thanks to the particular case of Theorem~\ref{CBIRepThm} stated
by \citet{1967LampertiCSBP}, has the law of a ${\cb( \Psi
-\alpha\Phi)}$ started at $x$; this is the conclusion of
\citet{MR2500236},
where they rigorously define the model in terms of a Poissonian
construction of a more general class of CBI processes which is inspired
in previous work of \citet{MR656509} for CBIs with continuous sample
paths. Another representation of CBI processes, this time in terms of
solutions to stochastic differential equations was given by
\citet{MR2243880} under moment conditions.

The usefulness of Theorem~\ref{CBIRepThm} is two-fold: first, we can
use known sample path properties of $X$ and $Y$ to deduce sample-path
properties of $Z$, and second, this representation gives a particular
coupling with monotonicity properties which are useful in limit
theorems involving $Z$, as seen in
Corollaries~\ref{ContinuityOfCBILawsCorollary},
\ref{LimitTheoremGWI} and Theorem \ref
{ExtensionOfPitmansTheorem}. Simple applications of Theorem \ref
{CBIRepThm} include the following.
%
%
\begin{corollary}[{[\citet{MR0290475}]}]
\label{KawazuWatanabeCorollary}
If $\Psi$ is the Laplace exponent of a spectrally positive L\'evy
process, and $\Phi$ is the Laplace exponent of a subordinator, there
exists a CBI process with branching mechanism $\Psi$ and
immigration mechanism $\Phi$.
\end{corollary}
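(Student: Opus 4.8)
The plan is to read the conclusion off directly from Theorem \ref{CBIRepThm}, so the ``proof'' is essentially an unpacking of that statement. First I would fix an arbitrary starting point $x\geq 0$ and, on a suitable probability space, construct a spectrally positive L\'evy process $X$ with Laplace exponent $\Psi$ together with an independent subordinator $Y$ with Laplace exponent $\Phi$. The existence of such a pair is standard: the hypotheses say precisely that $\Psi$ and $\Phi$ have the admissible L\'evy--Khintchine forms recalled in the Preliminaries, so each of $X$ and $Y$ exists by the L\'evy--Khintchine theorem, and an independent coupling of the two is obtained by a product construction (or Kolmogorov extension).

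Next I would invoke Theorem \ref{CBIRepThm} for this pair $(X,Y)$: there is a unique \cadlag\ process $Z$ solving\[ Z_t=x+X_{\int_0^t Z_s\, ds}+Y_t,\] and that process is a $\imf{\cbi}{\Psi,\Phi}$ started at $x$. In particular $Z$ has the affine log-Laplace transition semigroup (with branching mechanism $\Psi$ and immigration mechanism $\Phi$) and the Feller property that define $\cbi$ processes, so a $\cbi$ process with the prescribed mechanisms started from $x$ exists.

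Since $x\geq 0$ was arbitrary, this produces the whole family of one-dimensional starting laws, i.e.\ the Markov family $\imf{\cbi}{\Psi,\Phi}$ itself. There is no genuine obstacle here: the only mild point worth mentioning is the standard one, that a ``$\cbi$ process'' is a Markov family indexed by its initial state, and this is automatic because Theorem \ref{CBIRepThm} identifies $L(X,x+Y)$ with the canonical $\imf{\cbi}{\Psi,\Phi}$ for every $x\geq 0$. Thus Corollary \ref{KawazuWatanabeCorollary} is an immediate consequence of Theorem \ref{CBIRepThm} (which is in turn built on the analytic existence result, Theorem \ref{ExistenceTheorem}, via the pathwise construction $Z=\imf{L}{X,x+Y}$).
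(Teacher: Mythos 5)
Your proposal is correct and matches the paper's intent exactly: the authors state that Corollary \ref{KawazuWatanabeCorollary} follows immediately from Theorem \ref{CBIRepThm} and omit the proof, and your argument is precisely that unpacking (construct $X\perp Y$ via L\'evy--Khintchine, apply Theorem \ref{CBIRepThm} for each $x\geq 0$). Nothing further is needed.
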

%
%
\begin{corollary}
\label{NoDownwardJumpsCorollary}
A ${\cbi( \Psi,\Phi)}$ process does not jump downward.\vadjust{\goodbreak}
\end{corollary}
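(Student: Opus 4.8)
The plan is to read the absence of downward jumps directly off the pathwise representation supplied by Theorem~\ref{CBIRepThm}. Fix $x\geq 0$. Since the law of a $\cbi(\Psi,\Phi)$ process started at $x$ is determined by the triple $(\Psi,\Phi,x)$, and since ``having a downward jump'' is a measurable event on the space of \cadlag\ paths, it suffices to exhibit one version of the process with no downward jumps. For this I would take the version produced by Theorem~\ref{CBIRepThm}: let $X$ be a spectrally positive L\'evy process with Laplace exponent $\Psi$, let $Y$ be an independent subordinator with Laplace exponent $\Phi$, and let $Z$ be the unique solution of $Z_t=x+X_{\int_0^t Z_s\,ds}+Y_t$. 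Writing $C_t=\int_0^t Z_s\,ds$, we have $Z_t=x+X_{C_t}+Y_t$, and the task reduces to showing that $t\mapsto X_{C_t}+Y_t$ has no negative jumps.

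First I would note that, up to the (possibly finite) time at which $Z$ reaches the absorbing state $\infty$, the \cadlag\ path $Z$ is nonnegative and locally bounded, so $C$ is continuous and nondecreasing there; reaching $\infty$ is not a downward jump, so that portion of the path causes no trouble, and likewise the killing of $X$ only manifests as $Z$ being sent to $\infty$. Because $C$ is continuous, the time-changed process $t\mapsto X_{C_t}$ can only jump at a time $t$ where $X$ jumps at the point $C_t$, and then with the same (nonnegative) jump size; since $X$ is spectrally positive this forces $\Delta(X_{C_\cdot})_t\geq 0$ for every $t$. On the other hand $Y$, being a subordinator, is nondecreasing, so $\Delta Y_t\geq 0$ for every $t$. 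Note also that $X\circ C$ and $Y$ are both \cadlag, so left limits of their sum decompose additively.

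Combining these, $\Delta Z_t=\Delta(X_{C_\cdot})_t+\Delta Y_t\geq 0$ for all $t$, so this version of the $\cbi(\Psi,\Phi)$ process has no downward jumps, and by the reduction above the same holds almost surely for any $\cbi(\Psi,\Phi)$ process. I do not expect a genuine obstacle here; the only mildly delicate points are the bookkeeping around explosion of $Z$ and killing of $X$ (both of which only send the path to $\infty$), and the observation that continuity of $C$ is exactly what guarantees that the time-change $X\circ C$ inherits the one-sided jump structure of $X$.
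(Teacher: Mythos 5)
Your argument is correct and is precisely the one the paper intends: the authors state that this corollary ``follows immediately from Theorem \ref{CBIRepThm}'' and omit the proof, and what you have written is that immediate argument (continuity of $C$, spectral positivity of $X$, monotonicity of $Y$) spelled out carefully.
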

\citet{MR2592395} give a direct proof of this when $\Phi=0$.
%
%
\begin{corollary}
\label{LIL}
Let $Z$ be a ${\cbi( \Psi,\Phi)}$ that starts at
$x>0$, let
$\tilde\Phi$ be the right-continuous inverse of $\Psi$, and define
\[
{\alpha( t )}=\frac{{\log}\llvert{\log t}\rrvert}{{\tilde
\Phi( {t^{-1}\log}\llvert{\log t}\rrvert)}}.
\]
There exists a constant $\zeta$ (in general nonzero) such that
\[
\liminf_{t\to0}\frac{Z_t-x}{{\alpha( xt )}}=\zeta.
\]
\end{corollary}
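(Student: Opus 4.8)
The plan is to combine the pathwise representation of Theorem~\ref{CBIRepThm} with the iterated logarithm law for the lower envelope of the spectrally positive L\'evy process $X$ at time zero. Put $\imf{C}{t}=\int_0^t\imf{Z}{s}\,ds$, so that Theorem~\ref{CBIRepThm} yields $\imf{Z}{t}=x+X_{\imf{C}{t}}+Y_t$. Since $Z$ is \cadlag\ with $\imf{Z}{0}=x>0$, right-continuity forces $\imf{Z}{s}\to x$ as $s\downarrow 0$, whence $\imf{C}{t}/t\to x$ almost surely as $t\downarrow 0$. In particular $C$ is continuous and, for some random $\tau>0$, strictly increasing on $[0,\tau]$, so that $t\mapsto\imf{C}{t}$ is an increasing bijection of $(0,\tau)$ onto a right neighbourhood of $0$ with $\imf{C}{t}/(xt)\to 1$.

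Consider first the term $X_{\imf{C}{t}}$. By Blumenthal's zero--one law, $\liminf_{s\downarrow 0}X_s/\imf{\alpha}{s}$ is a.s.\ equal to a deterministic constant $\zeta\in[-\infty,\infty]$; the classical results on lower functions of spectrally positive L\'evy processes identify $\zeta$ as finite, with $\alpha$ the correct normalisation and $\zeta$ non-zero outside degenerate choices of $\Psi$. To transfer this along the time change I would show $\imf{\alpha}{\imf{C}{t}}/\imf{\alpha}{xt}\to 1$: indeed $\imf{C}{t}/(xt)\to 1$, and for any factor $c$ close to $1$ the ratio $\imf{\alpha}{cs}/\imf{\alpha}{s}$ lies between $c\wedge 1$ and $c\vee 1$ up to a factor tending to $1$, an elementary squeeze that uses only that $\tilde\Phi$ is non-decreasing and that $q\mapsto\imf{\tilde\Phi}{q}/q$ is non-increasing (the latter because $\Psi$ is convex with $\imf{\Psi}{0}\leq 0$). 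Since $t\mapsto\imf{C}{t}$ is an increasing bijection onto a right neighbourhood of $0$, it follows that $\liminf_{t\downarrow 0}X_{\imf{C}{t}}/\imf{\alpha}{xt}=\liminf_{t\downarrow 0}X_{\imf{C}{t}}/\imf{\alpha}{\imf{C}{t}}=\liminf_{s\downarrow 0}X_s/\imf{\alpha}{s}=\zeta$.

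Finally I would reinsert the subordinator, which is the step I expect to be the main obstacle. Because $Y\geq 0$ we have $\imf{Z}{t}-x\geq X_{\imf{C}{t}}$, hence $\liminf_{t\downarrow 0}\bigl(\imf{Z}{t}-x\bigr)/\imf{\alpha}{xt}\geq\zeta$; the reverse inequality requires controlling $Y_t$ against $\imf{\alpha}{xt}$ along a sequence realising $\zeta$ for the $X$-term. The classical input is the small-time law of large numbers $Y_t/t\to d$ a.s.\ as $t\downarrow 0$, where $d$ is the drift of $Y$ (equivalently $\lim_{\lambda\to\infty}\imf{\Phi}{\lambda}/\lambda$), which rests on $\int 1\wedge x\,\imf{\nu}{dx}<\infty$. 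When $X$ has paths of infinite variation, $\imf{\Psi}{\lambda}/\lambda\to\infty$, so $\imf{\tilde\Phi}{q}=\imf{o}{q}$, hence $\imf{\alpha}{xt}/t\to\infty$ and $Y_t=\imf{o}{\imf{\alpha}{xt}}$; the two bounds then coincide and $\zeta$ is precisely the lower-envelope constant of $X$. When $X$ has finite variation, $\imf{\Psi}{\lambda}/\lambda$ tends to a finite limit, $\imf{\alpha}{xt}$ is of order $t$, and now $X_{\imf{C}{t}}/t$ and $Y_t/t$ both converge a.s.\ (to the drifts of $X$ and of $Y$), so that $\bigl(\imf{Z}{t}-x\bigr)/\imf{\alpha}{xt}$ converges to an explicit constant, now genuinely depending on both mechanisms and on $x$. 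In either case the limit inferior is a constant, and taking $X$ a multiple of Brownian motion (so $\imf{\alpha}{t}\asymp\sqrt{t\log\abs{\log t}}$ and $\zeta$ is the classical iterated logarithm constant) or $X$ spectrally positive and $\gamma$-stable with $\gamma\in(1,2]$ shows that $\zeta$ is in general non-zero.
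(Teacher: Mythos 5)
Your proposal is correct and follows essentially the same route as the paper: the Lamperti representation plus $\frac{1}{t}\int_0^t Z_s\,ds\to x$, the regular-variation-type estimate $\imf{\alpha}{a_tt}/\imf{\alpha}{t}\to 1$ via concavity of $\tilde\Phi$, the Fristedt--Pruitt lower envelope $\liminf_{t\to 0}X_t/\imf{\alpha}{t}=\zeta$, and finally $Y_t/t\to d$ to dispose of the immigration term, with the same dichotomy (your infinite/finite variation split corresponds to the paper's $\tilde d=0$ versus $\tilde d>0$, the latter yielding the constant $-1+d\tilde d/x$).
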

The case $x=0$ in Corollary~\ref{LIL} is probably very different, as
seen when
${\Psi( \lambda)}=2\lambda^2$ and ${\Phi(
\lambda)}=d\lambda$,
which corresponds to the squared Bessel process of dimension $d$.
Indeed, It{\^o} and McKean [(\citeyear{MR0345224}), page 80] show that for a squared Bessel
process $Z$ of integer dimension that starts at $0$, we have
\[
\limsup_{t\to0 }\frac{Z_t}{ {2t\log}\llvert{\log t}\rrvert}=1.
\]
We have not been able to obtain this result using the Lamperti
transformation. However, note that starting from positive states, we
can obtain the lower growth rate, since it is the reproduction function
$X$ that determines it, while starting from~$0$, it is probably a
combination of the local growth of $X$ and $Y$ that drives that of $Z$.

A solution $c$ to ${\ivp( f,g )}$ is said to \textit
{explode} if
there exists $t\in(0,\infty)$ such that ${c ( t )}=\infty$.
(Demographic) explosion is an unavoidable phenomena of ${\ivp(
f,g )}$. When $f>0$ and $g=0$, it is known that explosion occurs
if and
only if
\[
\int^\infty\frac{1}{{f ( x )}} \,dx<\infty.
\]
Actually, even when there is immigration, the main function responsible
for explosion is the reproduction function.
%
%
\begin{pro}
\label{DeterministicExplosionProposition}
Let $ ( f,g )$ be an admissible pair, and let $f^+={\max
( f,0 )}$.

\begin{longlist}[(2)]
\item[(1)]\label{DeterministicExplosionProposition1} If
$\int^{\infty}1/{f^+ ( x )} \,dx=\infty$,
then no solution to ${\ivp( f,g )}$ explodes.
\item[(2)]\label{DeterministicExplosionProposition2} If
$\int^{\infty}1/{f^+ ( x )} \,dx<\infty$, $\lim_{x\to
\infty
}{f ( x )}=\infty$ and ${g ( \infty)}$
exceeds the maximum of
$-f$, then any solution to ${\ivp( f,g )}$ explodes.
\end{longlist}
\end{pro}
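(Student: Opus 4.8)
The plan is to sandwich $\imf{c'_+}{s}$ between autonomous expressions in $\imf{c}{s}$ and then integrate against the values of $c$. By Theorem \ref{ExistenceTheorem} at least one solution exists; I fix one, call it $c$, and recall that $c$ is continuous, non-decreasing, and absolutely continuous with $dc(s)=\imf{c'_+}{s}\,ds$, so that by the image-measure formula, for any Borel $\phi\geq 0$ and $a\leq b$,
\begin{esn}
\int_a^b \imf{\phi}{\imf{c}{s}}\,\imf{c'_+}{s}\,ds=\int_{\imf{c}{a}}^{\imf{c}{b}}\imf{\phi}{u}\,du,
\end{esn}
flat stretches of $c$ contributing nothing on either side. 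Both parts follow from this identity with a well-chosen $\phi$.

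For \eqref{DeterministicExplosionProposition1} I would argue by contradiction. Suppose $c$ explodes and put $t_0=\inf\set{t>0:\imf{c}{t}=\infty}$; then $t_0\in(0,\infty)$ and $\imf{c}{t}\uparrow\infty$ as $t\uparrow t_0$. Fix $t_1\in(0,t_0)$. On $[t_1,t_0)$ one has $\imf{g}{s}\leq\imf{g}{t_0}<\infty$ and $f\leq f^+$ (no negative jumps), so $\imf{c'_+}{s}\leq\imf{f^+}{\imf{c}{s}}+\imf{g}{t_0}$ and hence $\imf{c'_+}{s}\big/\paren{\imf{f^+}{\imf{c}{s}}+\imf{g}{t_0}}\leq 1$. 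Integrating over $[t_1,t]$, applying the identity above with $\phi=1/\paren{f^++\imf{g}{t_0}}$, and letting $t\uparrow t_0$ gives
\begin{esn}
\int_{\imf{c}{t_1}}^{\infty}\frac{du}{\imf{f^+}{u}+\imf{g}{t_0}}\leq t_0-t_1<\infty.
\end{esn}
It then remains to show this contradicts $\int^\infty1/f^+=\infty$. The constant $\imf{g}{t_0}$ only affects the integral where $f$ is small, so I would split $[\imf{c}{t_1},\infty)$ according to the sign of $f$: on $S=\set{u\geq\imf{c}{t_1}:\imf{f}{u}\leq 0}$ one has $\imf{c'_+}{s}\leq\imf{g}{t_0}$ whenever $\imf{c}{s}\in S$, so $c$ traverses $S$ at speed at most $\imf{g}{t_0}$ within time $t_0-t_1$ and $S$ therefore has finite Lebesgue measure, while off $S$ one has $f^+=f>0$ and $1/f^+$, $1/\paren{f^++\imf{g}{t_0}}$ differ only through the local size of $f$. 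Turning this dichotomy into the conclusion $\int_{\imf{c}{t_1}}^{\infty}1/f^+<\infty$ is the step I expect to be the main obstacle.

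For \eqref{DeterministicExplosionProposition2} let $m=-\inf_x\imf{f}{x}$, which is the maximum of $-f$ and is finite because $\imf{f}{x}\to\infty$ and $f$ is locally bounded. Since $\imf{g}{\infty}=\sup_t\imf{g}{t}>m$, choose $t^*<\infty$ with $\gamma:=\imf{g}{t^*}-m>0$; then for every $t\geq t^*$ at which $\imf{c}{t}<\infty$,
\begin{esn}
\imf{c'_+}{t}=\imf{f}{\imf{c}{t}}+\imf{g}{t}\geq\inf_x\imf{f}{x}+\imf{g}{t^*}=\gamma,
\end{esn}
so $\imf{c}{t}\geq\imf{c}{t^*}+\gamma\paren{t-t^*}$ for as long as $c$ stays finite. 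Consequently either $c$ explodes, or $c$ is finite on all of $[0,\infty)$ and $\imf{c}{t}\to\infty$. In the latter case, using $\imf{f}{x}\to\infty$, pick $M$ with $\imf{f}{x}>0$ for $x\geq M$, and let $t^{**}$ be the first time $\imf{c}{t}\geq M$ (finite in this case); for $t\geq t^{**}$ one has $\imf{c}{t}\geq M$ and hence $\imf{c'_+}{t}=\imf{f}{\imf{c}{t}}+\imf{g}{t}\geq\imf{f}{\imf{c}{t}}=\imf{f^+}{\imf{c}{t}}>0$, so the identity above gives
\begin{esn}
t-t^{**}\leq\int_{t^{**}}^{t}\frac{\imf{c'_+}{s}}{\imf{f^+}{\imf{c}{s}}}\,ds=\int_{\imf{c}{t^{**}}}^{\imf{c}{t}}\frac{du}{\imf{f^+}{u}}\leq\int_{M}^{\infty}\frac{du}{\imf{f^+}{u}}<\infty.
\end{esn}
Letting $t\to\infty$ makes the left side unbounded, a contradiction; so $c$ explodes in this case too, completing the proof. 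This direction is essentially routine: its content is that $\imf{g}{\infty}$ exceeding the maximum of $-f$ drives $c$ out of every bounded interval and into the region where $f$ is positive, after which the classical explosion criterion for $f^+$ takes over.
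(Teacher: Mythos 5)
Your part~(1) contains a genuine gap, and it is exactly the step you flag. From explosion you correctly derive
\begin{esn}
\int_{\imf{c}{t_1}}^{\infty}\frac{du}{\imf{f^+}{u}+\imf{g}{t_0}}\leq t_0-t_1<\infty,
\end{esn}
but this does \emph{not} contradict $\int^{\infty}1/f^+=\infty$, and no splitting of the domain will make it do so. Take $f$ strictly positive, equal to $n^2$ near $n$ except on an interval of length $2^{-n}$ on which it descends continuously to the value $4^{-n}$, stays there for half that interval, and then jumps back up (so $f$ is \cadlag\ with no negative jumps). Then $\int^{\infty}1/f^+\geq\sum_n 2^{-n-1}\,4^{n}=\infty$, while $\int^{\infty}du/\paren{\imf{f^+}{u}+C}<\infty$ for every $C>0$. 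Worse: for this $f$ and $g\equiv 1$ the solution of $\imf{\ivp}{f,g}$ is unique (since $f+\imf{g}{0}>0$) and explodes at time $\int_0^\infty du/\paren{\imf{f}{u}+1}<\infty$, so part~(1) as literally stated is false and cannot be proved. You should not read this as a failure to find the paper's argument: the paper's own proof commits the same error in contrapositive form, asserting without justification that $\int^{\infty}1/f^+=\infty$ forces $\int_0^\infty dx/\paren{\imf{f^+}{x}+1+\imf{g}{t}}=\infty$. Statement, paper proof and your proof are all repaired simultaneously by strengthening the hypothesis of~(1) to ``$\int^{\infty}dx/\paren{\imf{f^+}{x}+C}=\infty$ for every $C>0$'' (which holds, for instance, whenever $f^+$ is eventually bounded below by a positive constant, or eventually non-decreasing); under that hypothesis the display above, taken with $C=\imf{g}{t_0}$ when $\imf{g}{t_0}>0$ and combined with $1/f^+\geq 1/\paren{f^++C}$ when $\imf{g}{t_0}=0$, is already the desired contradiction, so your argument then closes with no further work.

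Your part~(2) is essentially the paper's argument --- both integrate $ds/\imf{c'_+}{s}$ along the range of $c$ once $f\circ c+g$ has become positive --- and your version is in fact more careful on one point: you first establish $\imf{c}{t}\geq\imf{c}{t^*}+\gamma\paren{t-t^*}$, hence that $c$ genuinely sweeps out all of $[M,\infty)$, before invoking the integral bound, whereas the paper applies the inverse-function identity without checking that $c$ is unbounded. One small repair is needed: the inequality $\imf{c'_+}{t}\geq\imf{f}{\imf{c}{t}}=\imf{f^+}{\imf{c}{t}}$ silently uses $g\geq 0$, which is not part of the hypotheses (admissibility only requires $\imf{f}{0}+\imf{g}{0}\geq 0$, so $g$ may be negative; e.g.\ $\imf{f}{x}=1+x^2$, $g\equiv-1/2$). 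Replace the comparison function $f^+$ by $f+\imf{g}{t^{**}}$, which is eventually $\geq f/2$ because $f\to\infty$, and the same change-of-variables computation goes through. The paper's proof has the identical blemish (it bounds $1/\paren{\imf{f}{x}+\imf{g\circ i}{x}}$ by $1/\imf{f}{x}$), so this is cosmetic rather than a difference between the two approaches.
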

We call $f$ an \textit{explosive reproduction function} if
\[
\int^{\infty}\frac{1}{{f^+ ( x )}} \,dx<\infty.
\]

Recall that $\infty$ is an absorbing state for CBI processes;
Proposition~\ref{DeterministicExplosionProposition} has immediate
implications on how\vadjust{\goodbreak} a CBI process might reach it. First of all, CBI
processes might jump to $\infty$, which happens if and only if either
the branching or the immigration corresponds to killed L\'evy
processes. When there is no immigration and the branching mechanism
$\Psi$ has no killing rate, the criterion is due to
\citet{1970Ogura}
and \citet{MR0408016}, who assert that the probability that a ${\cb
( \Psi)}$ started from $x>0$ is absorbed at infinity in finite
time is positive if and only if
\[
\int_{0+}\frac{1}{{\Psi( \lambda)}} \,d\lambda>-\infty.
\]
One can even obtain a formula for the distribution of its explosion
time; cf. the proof of Theorem 2.2.3.2 in
\citet{MR2466449}, page 95.
We call such $\Psi$ an \textit{explosive branching mechanism}. From
Proposition~\ref{DeterministicExplosionProposition} and Theorem \ref
{CBIRepThm} we get:

%
\begin{corollary}
\label{CBIExplosionCorollary}
Let $x>0$.
\begin{longlist}[(3)]
\item[(1)] The probability that a ${\cbi( \Psi,\Phi
)}$ $Z$ that
starts at $x$ jumps to $\infty$ is positive if and only if ${\Psi
( 0 )}$ or ${\Phi( 0 )}$ are nonzero.
\item[(2)] The probability that $Z$ reaches $\infty$ continuously is
positive if and only if ${\Psi( 0 )}=0$ and $\Psi$ is an
explosive
branching mechanism.
\item[(3)] The probability that $Z$ reaches $\infty$ continuously is
equal to $1$ if ${\Psi( 0 )}={\Phi( 0
)}=0$, $\Phi$ is not
zero and $\Psi$ is explosive.
\end{longlist}
\end{corollary}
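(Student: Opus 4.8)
The plan is to condition on the paths of the reproduction process $X$ and the immigration subordinator $Y$ and to transfer the deterministic dichotomy of Proposition~\ref{DeterministicExplosionProposition} to $Z$, which by Theorem~\ref{CBIRepThm} is the unique solution of $\imf{\ivp}{X,x+Y}$. Writing $\imf{c}{t}=\int_0^t Z_s\,ds$, which is continuous, one has $\jump{Z}{t}=\jump{X}{\imf{c}{t}}+\jump{Y}{t}$, so a jump of $Z$ to $\infty$ can only be produced by an infinite jump of $X$ — possible iff $X$ is killed, i.e. $\imf{\Psi}{0}=-\kappa_X\neq 0$, and then occurring when the time change $c$ reaches the killing time $\zeta_X$ — or by an infinite jump of $Y$ — possible iff $\imf{\Phi}{0}=\kappa_Y\neq 0$, at its killing time $\zeta_Y$; and $Z$ reaching $\infty$ \emph{continuously} is exactly $c$ exploding, that is, $\imf{\ivp}{X,x+Y}$ exploding in the sense of Proposition~\ref{DeterministicExplosionProposition}. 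The one nontrivial analytic input I would invoke is that, via the Lamperti representation of $\cb$ processes and the criterion recalled in the introduction (together with a tail $0$–$1$ law), $\Psi$ is an explosive branching mechanism if and only if almost every path of $X$ is an explosive reproduction function, in which case necessarily $X_t\to\infty$ a.s.\ and so $\sup_{t\ge 0}\paren{-X_t}<\infty$ a.s.

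For~(1): if $\imf{\Psi}{0}=\imf{\Phi}{0}=0$ then $X$ and $Y$ have only finite jumps, hence $\jump{Z}{t}<\infty$ always and $Z$ never jumps to $\infty$. Conversely, if $\imf{\Psi}{0}\neq 0$, then since $Z_0=x>0$ and $Z$ is \cadlag\ one has $Z>0$ on a random interval $[0,\delta)$, so $\imf{c}{\delta}>0$ a.s.; as $\zeta_X$ is an independent exponential variable, $\p\paren{\zeta_X<\imf{c}{\delta}}>0$, and on that event $c$ reaches $\zeta_X$ while $Z$ is still finite, forcing a jump of $Z$ to $\infty$. The case $\imf{\Phi}{0}\neq 0$ is identical, using that $Z$ stays finite on some random $[0,\delta')$ and $\p(\zeta_Y<\delta')>0$.

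For the ``if'' direction of~(2), assume $\imf{\Psi}{0}=0$ and $\Psi$ explosive. First I would reduce to $\imf{\Phi}{0}=0$: on $[0,\zeta_Y)$ the process $Z$ coincides with the $\cbi$ $Z^0$ built from $\paren{X,x+Y^0}$, where $Y^0$ is the unkilled subordinator and $\imf{\Phi^0}{0}=0$; once the $\imf{\Phi}{0}=0$ case is established, $Z^0$ reaches $\infty$ continuously at an a.s.\ finite time $t_\infty^0$ on an event of positive probability, and since $\zeta_Y$ is an independent exponential, on the further positive-probability event $\set{t_\infty^0<\zeta_Y}$ the process $Z$ reaches $\infty$ continuously. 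So let $\imf{\Psi}{0}=\imf{\Phi}{0}=0$. On $A=\set{x+Y_\infty>\sup_{t\ge 0}\paren{-X_t}}$ — which has positive probability: it is a.s.\ when $Y\not\equiv 0$ (then $Y_\infty=\infty$) and equals $\set{\inf_{t}X_t>-x}$ when $Y\equiv 0$, of positive probability because $X$ drifts to $+\infty$ — the second part of Proposition~\ref{DeterministicExplosionProposition} applies (a.s., since $X$ is a.s.\ an explosive reproduction function with $X_t\to\infty$), so $\imf{\ivp}{X,x+Y}$ explodes; as $X$ and $Y$ have no infinite jumps, $c$ stays continuous up to the explosion time and $Z$ reaches $\infty$ continuously. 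For~(3), when in addition $\Phi\neq 0$ and $\imf{\Phi}{0}=0$ one has $Y_\infty=\infty$ a.s., so $A$ has full probability and $Z$ reaches $\infty$ continuously almost surely.

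For the ``only if'' direction of~(2): if $\imf{\Psi}{0}\neq 0$, then whenever $c$ reaches the finite value $\zeta_X$ the process $Z$ jumps to $\infty$, and a continuous explosion would force $c\to\infty$ and hence first produce such a jump, so $Z$ can reach $\infty$ only by a jump and $\p(Z\text{ reaches }\infty\text{ continuously})=0$; and if $\imf{\Psi}{0}=0$ with $\Psi$ not explosive, then a.s.\ $\int^\infty 1/X^+=\infty$, so by the first part of Proposition~\ref{DeterministicExplosionProposition} no solution of $\imf{\ivp}{X,x+Y}$ explodes, and $Z$ can reach $\infty$ only through a jump of $Y$, never continuously. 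The step I expect to be the main obstacle is the bookkeeping that cleanly separates the two mechanisms — in particular making rigorous that a killing of $X$ always preempts a continuous explosion, and justifying the killed-subordinator reduction used in parts~(2)–(3) — together with pinning down the analytic equivalence between ``$\Psi$ an explosive branching mechanism'' and the almost-sure pathwise statement about $X$; once those are in place, Proposition~\ref{DeterministicExplosionProposition} and Theorem~\ref{CBIRepThm} supply the rest.
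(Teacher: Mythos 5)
Your argument is correct and, for parts (1) and (3) and the ``if'' half of (2), follows essentially the same route as the paper: represent $Z$ as the Lamperti transform of $\paren{X,x+Y}$ via Theorem \ref{CBIRepThm} and transfer the deterministic dichotomy of Proposition \ref{DeterministicExplosionProposition} path by path, separating jumps to $\infty$ (killing of $X$ or $Y$) from continuous explosion (explosion of $c$). The genuine divergence is in how you tie continuous explosion to ``$\Psi$ explosive'' in part (2). The paper never converts the Ogura--Grey criterion into an almost-sure pathwise statement; instead it compares $C$ with auxiliary pure-branching problems --- $\imf{\ivp}{x+Y_{\tau-}+\eps+X,0}$ for the ``only if'' direction and $\imf{\ivp}{x/2+X,0}$ for the ``if'' direction --- and invokes Ogura--Grey at the level of the $\cb$ \emph{process}, so that only ``positive probability of explosion'' is ever needed. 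You instead upgrade Ogura--Grey to the equivalence ``$\Psi$ explosive $\iff$ almost every path of $X$ is an explosive reproduction function,'' via a tail $0$--$1$ law (using that explosiveness forces $\imf{\Psi'}{0+}=-\infty$, hence $X_t\to\infty$, so that $\int^\infty ds/X_s^+$ and $\int^\infty ds/\paren{X_s-X_t}^+$ converge or diverge together and the event is a.s.\ tail-measurable). That step is correct but is real work you have rightly flagged; it buys you a cleaner, purely deterministic application of Proposition \ref{DeterministicExplosionProposition}(2) on the event $A=\set{x+Y_\infty>\sup_t\paren{-X_t}}$, whereas the paper's comparison argument avoids the $0$--$1$ law entirely. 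Two small points to tighten: in part (1) the inequality $\proba{\zeta_X<\imf{c}{\delta}}>0$ should be argued with $c$ computed from the \emph{unkilled} path and a deterministic threshold (e.g.\ pick $\eps$ with $\proba{\imf{c^0}{1}>\eps}>0$ and use independence of $\zeta_X$ from $X^0$ and $Y$), since $\delta$ and $c$ as written depend on the killed process; and in the ``only if'' case $\imf{\Psi}{0}=0$, $\Psi$ non-explosive, your a.s.\ divergence of $\int^\infty ds/X_s^+$ also needs the (easy) observation that it holds automatically when $X$ does not drift to $+\infty$.
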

%

We mainly use stochastic integration by parts in our proof of Theorem~\ref{CBIRepThm}; however, a weak convergence type of proof, following
the case $\Phi=0$ presented by \citet{MR2592395}, could also be
achieved in conjunction with a stability result, based on the
forthcoming Theorem~\ref{StabilityTheorem}.

The following result deals with stability of ${\ivp( f,g
)}$ under
changes in $f$ and $g$ and even includes a discretization of the
initial value problem, itself. Indeed, consider the following
approximation procedure: given $\sigma>0$, called the \textit{span},
consider the partition
\[
t_i=i\sigma,\qquad i=0,1,2,\ldots,
\]
and construct a function $c^\sigma$ by the recursion
\[
{c^\sigma( 0 )}=0
\]
and for $t\in[t_{i-1},t_i)$,
\[
{c^\sigma( t )}={c^\sigma( t_{i-1} )}+ (
t-t_{i-1} ) \bigl[ {f\circ c^\sigma( t_{i-1} )}+{g (
t_{i-1} )} \bigr]^+.
\]
Equivalently, the function $c^\sigma$ is the unique solution to the equation
\[
{\ivp_\sigma( f,g )}: 
{c^\sigma( t )}=\int
_0^t \bigl[ {f\circ c^{\sigma} \bigl(
\lfloor s/\sigma\rfloor\sigma\bigr)}+{g \bigl( \lfloor s/\sigma\rfloor
\sigma
\bigr)} \bigr]^+ \,ds. 
\]
We will write ${\ivp_0 ( f,g )}$ to mean ${\ivp(
f,g )}$. Let
$D_+$ denote the right-hand derivative.

The stability result is stated in terms of the usual \textit{Skorohod
$J_1$ topology} for c\`adl\`ag functions: a sequence $f_n$ converges to
$f$\vadjust{\goodbreak} if there exist a sequence of homeomorphisms of $[0,\infty)$ into
itself such that
\[
f_n-f\circ\lambda_n \quad\mbox{and}\quad \lambda_n-\id
\qquad\mbox{converge to zero uniformly on compact sets}
\]
(where $\id$ denotes the identity function on $[0,\infty)$). However,
part of the theorem uses another topology on nonnegative c\`adl\`ag
functions introduced by \citet{MR2592395}, which we propose to
call the
\textit{uniform $J_1$ topology}. Consider a distance $d$ on $[0,\infty
]$ which makes it homeomorphic to $[0,1]$. Then the uniform $J_1$
topology is characterized by the following: a sequence $f_n$ converges
to $f$ if there exist a sequence of homeomorphisms of $[0,\infty)$
into itself such that
\[
{d ( f_n,f\circ\lambda_n )}\to0 \quad\mbox{and}\quad
\lambda_n-\id\to0\qquad\mbox{uniformly on $[0,\infty)$}.
\]

%
%
\begin{theorem}
\label{StabilityTheorem}
Let $ ( f,g )$ be an admissible breadth-first pair
and suppose there is a unique nondecreasing function $c$ which
satisfies ${c ( 0 )}=0$ and (\ref{IVPWithInequalities})
[and is
therefore the unique solution to ${\ivp( f,g )}$]; define its
explosion time by
\[
\tau=\inf\bigl\{ t\geq0\dvtx{c ( t )}=\infty\bigr\}\in(0,\infty].
\]

Let $ ( f_n,g_n )$ be admissible breadth-first pairs. Suppose
$f_n\to f$ and $g_n\to g$ in the Skorohod $J_1$ topology and that
$\sigma_n$ is a sequence of nonnegarive real numbers which tend to
zero. Let $c_n$ be the unique solution to ${\ivp_{\sigma_n} (
f_n,g_n )}$ when $\sigma_n>0$ and any solution to ${\ivp
( f_n,g_n )}$ when $\sigma_n=0$. Then $c_n\to c$ pointwise and uniformly
on compact sets of $[0,\tau)$.

Furthermore, if $f\circ c$ and $g$ do not jump at the same time, then
\mbox{$D_+ c_n\to D_+ c$}:

\begin{longlist}[(2)]
\item[(1)] in the Skorohod $J_1$ topology if $\tau=\infty$, and
\item[(2)] in the uniform $J_1$ topology if $\tau<\infty$ if we
additionally assume that ${f_n ( x )},{f ( x
)}\to\infty$ as
$x\to\infty$ uniformly in $n$.
\end{longlist}
\end{theorem}

It is not very hard to show that the jumping condition of Theorem \ref
{StabilityTheorem} holds in a stochastic setting.
%
%
\begin{pro}
\label{JumpingConditionProposition}
Let $X$ be a spLp, $Y$ an independent subordinator with Laplace
exponents $\Psi$ and $\Phi$ and, for $x\geq0$, let $Z$ the unique
process such that
\[
Z_t=x+X_{C_t}+Y_t\qquad\mbox{where
}C_t=\int_0^t Z_s \,ds.
\]
Almost surely, the processes $X\circ C$ and $Y$ do not jump at the same time.
\end{pro}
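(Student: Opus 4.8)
The plan is to decouple the two sources of jumps and use the independence of $X$ and $Y$ together with the fact that a subordinator has at most countably many jumps. First I would fix a deterministic enumeration $(q_n)_{n\geq 1}$ of a set that is almost surely dense enough to capture all jump times of $Y$: namely, since $Y$ is a subordinator, its jump times can be exhausted by a countable family of stopping times $(\tau_n)$ (for instance the successive jump times of magnitude in $(1/(k+1),1/k]$ for each $k$), so that $\Delta Y_t \neq 0$ implies $t = \tau_n$ for some $n$. Thus
\begin{esn}
\set{\exists t>0 : \Delta (X\circ C)_t \neq 0 \text{ and } \Delta Y_t \neq 0} \subseteq \bigcup_{n} \set{\Delta (X\circ C)_{\tau_n} \neq 0}.
\end{esn}
It then suffices to show $\proba{\Delta (X\circ C)_{\tau_n} \neq 0} = 0$ for each fixed $n$.

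The key step is to understand when $X\circ C$ jumps at a given time $t$. Since $C$ is continuous, nondecreasing and absolutely continuous (with $C_t' = Z_t$ while finite), the process $X\circ C$ jumps at $t$ precisely when $X$ jumps at the point $C_t$, i.e. when $C_t$ is one of the (countably many) jump times of the spectrally positive Lévy process $X$. So I would write, with $(s_m)$ a countable family of stopping times exhausting the jump times of $X$,
\begin{esn}
\set{\Delta (X\circ C)_{\tau_n} \neq 0} \subseteq \bigcup_m \set{C_{\tau_n} = s_m}.
\end{esn}
Now condition on $Y$ (equivalently on $\tau_n$): because $X$ and $Y$ are independent, the random time $C_{\tau_n}$ is — conditionally on $Y$ — measurable with respect to $X$ only through the solution map, but more usefully $s_m$ is a jump time of the Lévy process $X$, whose law conditionally on $Y$ is diffuse. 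The cleanest route is to argue the other way: fix $n$ and condition on the $\sigma$-field generated by $X$. Then $\tau_n$ is a fixed time relative to... no — here I have to be careful, since $C$ depends on both processes.

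The main obstacle is exactly this entanglement: $C_{\tau_n}$ is not independent of $X$, so I cannot naively say "a Lévy process does not jump at an independent time." The remedy is to use a time-change / Fubini argument on the occupation measure. I would proceed as follows. Conditionally on $Y$, the time $\tau_n$ is a constant, say $\tau_n = u$. The question becomes whether the (now only $X$-randomized) process $t\mapsto X_{C_t}$, with $C$ the solution driven by $X$ and the fixed function $s\mapsto Y_s$, jumps at $t=u$. Equivalently, writing $A = C_u \in [0,\infty]$ (an $X$-measurable random variable), whether $\Delta X_A \neq 0$. Now I invoke the structure of the Lamperti-type equation near a fixed time: by Proposition \ref{uniquenessForLevy} the solution $Z$ (hence $C$) is adapted and, crucially, on the event $\Delta X_{C_u}\neq 0$ the jump of $X$ at location $C_u$ would force $Z$ to jump at time $u$, and one checks from the equation $Z_u = x + X_{C_u} + Y_u$ together with right-continuity that the set of times $u$ at which $X\circ C$ has a jump has Lebesgue measure zero (it is countable). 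Combining: the set $N$ of "bad" times for $X\circ C$ is a countable $X$-measurable (random) subset of $[0,\infty)$, and we must show $u = \tau_n \notin N$ a.s. Condition on $X$: then $N$ is a fixed countable set and $\tau_n$ is, conditionally on $X$, a genuine jump time of the subordinator $Y$ which is independent of $X$. A subordinator's jump times have a diffuse conditional law given $X$ — more precisely, for each $m$, $\proba{\tau_m \in N \mid X} = 0$ because, conditionally on $X$, $\tau_m$ has a density (or at worst a law with no atoms) on $(0,\infty)$; hence $\proba{\tau_n \in N} = \esp{\proba{\tau_n\in N\mid X}} = 0$. Summing over $n$ and $m$ finishes the proof. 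I expect the delicate bookkeeping to be in justifying the measurability and the "no atoms" claim for the jump times $\tau_m$ of $Y$ conditionally on $X$ (which follows from independence and the fact that, unconditionally, each $\tau_m$ — being a hitting-type functional of a subordinator with infinite or positive-drift activity, or an exponential waiting time in the compound Poisson case — has a diffuse law), and in the alternative compound-Poisson case one argues directly that the finitely many jump epochs of $Y$ are spread out by independent exponentials, again diffuse given $X$.
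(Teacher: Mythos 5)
Your reduction to the countably many jump times $\tau_n$ of $Y$ is fine and matches the paper's starting point, but the core of your argument — showing that $X$ is almost surely continuous at the random time $C_{\tau_n}$ — has a genuine gap. You correctly identify the obstacle (the entanglement of $C$ with both $X$ and $Y$), but the proposed remedy falls back into it: the set $N$ of jump times of $X\circ C$ is \emph{not} measurable with respect to $X$ alone, since $C$ solves an equation driven by both $X$ and $Y$. Consequently, conditioning on $X$ does not freeze $N$, and the claim $\proba{\tau_n\in N\mid X}=0$ cannot be deduced from the diffuseness of the conditional law of $\tau_n$: when $\tau_n$ and $N$ are not conditionally independent, a diffuse law is no protection (if $U$ is uniform and $N=\set{U}$, then $U\in N$ surely). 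The same problem blocks the symmetric route: conditionally on $Y$, the time $C_{\tau_n}$ is an $X$-measurable random time, and an $X$-measurable random time can perfectly well sit on a jump of $X$ (the first jump time of $X$ is the obvious example). Countability of the two sets of jump times plus a Fubini argument is simply not enough; the whole content of the proposition is ruling out this coincidence, and that requires a structural input you have not supplied.

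The paper supplies that input via quasi-left-continuity. One works in the filtration $\G_y=\F^X_y\vee\sag{Y}$, in which $X$ is still a L\'evy process by independence, and shows that $C_{T_i}$ (with $T_i$ a jump time of $Y$ of size $>\eps$) is a $\G$-stopping time \emph{announced} by the strictly smaller stopping times $C_{(T_i-1/n)^+}$ when $C$ is strictly increasing; quasi-left-continuity of L\'evy processes then gives $\jump{X}{C_{T_i}}=0$ a.s. In the compound Poisson case, on the event where $C$ is flat just before $T_i$, the time $C_{T_i}$ is instead a first-passage time of the spectrally positive process $X$ below a level, which is announced by the passage times below slightly higher levels, and the same conclusion follows. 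If you want to complete your proof, this predictability-plus-quasi-left-continuity step is the missing ingredient; the conditioning/Fubini scheme as written does not close.
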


From Theorem~\ref{StabilityTheorem} and Propositions \ref
{uniquenessForLevy} and~\ref{JumpingConditionProposition}, we deduce
the following weak continuity result.
%
%
\begin{corollary}
\label{ContinuityOfCBILawsCorollary}
Let $\Psi_n,\Psi$ be Laplace exponents of spLps and $\Phi_n,\Phi$
be Laplace exponents of subordinators and suppose that $\Psi_n\to\Psi
$ and $\Phi_n\to\Phi$ pointwise. If $ ( x_n )$ is a
sequence in
$[0,\infty]$ converging to\vadjust{\goodbreak} $x$ and $Z_n$ (resp., $Z$) are CBIs with
branching and immigration mechanisms $\Psi_n$ and $\Phi_n$ (resp.,
$\Psi$ and $\Phi$) and starting at $x_n$ (resp., $x$) then $Z_n\to Z$
in the Skorohod $J_1$ topology on c\`adl\`ag paths on $[0,\infty]$ if
$\Psi$ is nonexplosive and in the uniform $J_1$ topology if $\Psi$
is explosive.
\end{corollary}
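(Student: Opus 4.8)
The plan is to reduce the statement to a deterministic one via the representation in Theorem~\ref{CBIRepThm} and then read it off from the stability result Theorem~\ref{StabilityTheorem}. First I would put all the processes on one probability space. By Theorem~\ref{CBIRepThm} write $Z_n=x_n+X_n\circ C_n+Y_n$ and $Z=x+X\circ C+Y$, where $X_n$ (resp.\ $X$) is a spectrally positive L\'evy process with Laplace exponent $\Psi_n$ (resp.\ $\Psi$), $Y_n$ (resp.\ $Y$) is an independent subordinator with Laplace exponent $\Phi_n$ (resp.\ $\Phi$), and $\imf{C_n}{t}=\int_0^t\imf{Z_n}{s}\,ds$, $\imf{C}{t}=\int_0^t\imf{Z}{s}\,ds$. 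Pointwise convergence of Laplace exponents, including at $\lambda=0$, i.e.\ of the killing rates, is equivalent to convergence, in the Skorohod $J_1$ topology, of the associated (possibly killed) L\'evy processes; since $X_n$ is independent of $Y_n$ and $X$ of $Y$, the pairs $\paren{X_n,Y_n}$ converge in law to $\paren{X,Y}$ for the product Skorohod topology, and by Skorohod's representation theorem I may assume $\paren{X_n,Y_n}\to\paren{X,Y}$ almost surely in $J_1$. The case $x=\infty$ is degenerate ($Z\equiv\infty$), so I assume $x<\infty$.

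Next I would apply Theorem~\ref{StabilityTheorem} path by path. Fix $\omega$ in the almost sure event where $\paren{X_n,Y_n}\to\paren{X,Y}$ in $J_1$ and where $X\circ C$ and $Y$ have no common jump (Proposition~\ref{JumpingConditionProposition}); set $\paren{f,g}=\paren{X,x+Y}$ and $\paren{f_n,g_n}=\paren{X_n,x_n+Y_n}$, which are admissible breadth-first pairs with $f_n\to f$ and $g_n\to g$ in $J_1$ (here $x_n\to x$ is used). By Proposition~\ref{uniquenessForLevy} (whose functional inequality is \eqref{IVPWithInequalities} written for $D_+ c$), the pair $\paren{f,g}$ has a unique non-decreasing solution of~\eqref{IVPWithInequalities}, namely $c=C$; thus the standing hypothesis of Theorem~\ref{StabilityTheorem} is met, and $C_n$ solves $\imf{\ivp}{f_n,g_n}=\imf{\ivp_0}{f_n,g_n}$. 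Taking $\sigma_n=0$, Theorem~\ref{StabilityTheorem} gives $C_n\to C$ uniformly on compacts of $[0,\tau)$, and, because $f\circ c=X\circ C$ and $g=x+Y$ do not jump at the same time, $Z_n=D_+ C_n\to D_+ C=Z$ in the Skorohod $J_1$ topology if $\tau=\infty$ and in the uniform $J_1$ topology if $\tau<\infty$. Since this holds for almost every $\omega$, the desired convergence in law follows.

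It remains to match $\tau=\infty$ with $\Psi$ non-explosive and $\tau<\infty$ with $\Psi$ explosive. If $\Psi$ is non-explosive then almost surely $\int^{\infty}1/\imf{f^+}{y}\,dy=\infty$ for $f=X$: if $X$ drifts to $-\infty$ or oscillates the set $\set{y:\imf{X}{y}\le 0}$ has infinite Lebesgue measure, while if $X$ drifts to $+\infty$ it grows too slowly for the integral to converge — the latter being exactly the classical criterion that a non-explosive $\imf{\cb}{\Psi}$ a.s.\ does not reach $\infty$ (\cite{1970Ogura,MR0408016}). Hence the first part of Proposition~\ref{DeterministicExplosionProposition} forces $\tau=\infty$ and we get Skorohod $J_1$ convergence. (Branching killing does not occur here, since a killed $\Psi$ is explosive; if only the immigration is killed, I would couple the killing clocks, run the previous step on $[0,T)$ with $T$ the killing time, $T_n\to T$, and note that the terminal jump to $\infty$ is harmless for $J_1$ convergence, cf.\ Corollary~\ref{CBIExplosionCorollary}.) If $\Psi$ is explosive I would instead use the uniform $J_1$ conclusion of Theorem~\ref{StabilityTheorem}; here lies the main obstacle, namely checking its extra hypothesis that $\imf{X_n}{y},\imf{X}{y}\to\infty$ as $y\to\infty$ uniformly in $n$. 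Explosiveness of $\Psi$ makes the positive jumps of $X$ have infinite mean, so $\imf{X}{y}\to\infty$ almost surely by the law of large numbers for L\'evy processes; uniformity along the approximating sequence requires a further lower-tail estimate on the $X_n$ near $y=\infty$, or else one stops everything at $\tau$ — where $Z$ is already absorbed at $\infty$ — and replaces each reproduction function beyond the interval it actually visits by one increasing to $\infty$, which brings matters within the scope of that hypothesis.
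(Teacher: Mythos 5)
Your reduction to Theorem \ref{StabilityTheorem} via Theorem \ref{CBIRepThm}, Skorohod's representation theorem, and Propositions \ref{uniquenessForLevy} and \ref{JumpingConditionProposition} is exactly the route the paper takes, and your treatment of the non-explosive case is correct (the paper does not even spell out why $\tau=\infty$ almost surely; your argument for that is fine). The problem is the explosive case, where you correctly identify the obstacle --- the extra hypothesis of Theorem \ref{StabilityTheorem} that $\imf{f_n}{y},\imf{f}{y}\to\infty$ as $y\to\infty$ \emph{uniformly in $n$} --- but then do not resolve it. Neither of your two suggestions works as stated. A ``further lower-tail estimate on the $X_n$'' is indeed what is needed, but you do not produce it; and the fallback of ``replacing each reproduction function beyond the interval it actually visits by one increasing to $\infty$'' is circular: the range visited by $c_n$ on $[0,\infty)$ is all of $[0,\imf{c_n}{\infty})$, so you cannot modify $f_n$ off that range without changing $c_n$, and the whole point of the uniform hypothesis is precisely to prevent $Z_n=D_+c_n$ from dipping back below level $M$ after the approximate explosion time, which pathwise $J_1$ convergence of $X_n$ to $X$ on compacts does not control.

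The paper fills this gap with a quantitative probabilistic estimate rather than a pathwise one: it shows that for every $\eps>0$ one can choose $M$ and $t$ so that $\proba{X^n_s\leq M\text{ for some }s\geq t}$ is small uniformly in large $n$, whence $\imf{d_\infty}{Z^n,Z}\to 0$ \emph{in probability} (which suffices for weak convergence). The key input is that an explosive $\Psi$ has $\imf{\Psi'}{0+}=-\infty$, so $X$ drifts to $+\infty$ and $-\inf_s X_s$ is exponential with parameter $\eta=\inf\set{\lambda>0:\imf{\Psi}{\lambda}=0}$; combining the Markov property at time $t$ with this gives $\proba{X_s\leq M\text{ for some }s\geq t}\leq\proba{X_t\leq 2M}+e^{-\eta M}$, and the same bound holds asymptotically for $X^n$ because convexity of the $\Psi_n$ together with $\Psi_n\to\Psi$ forces $\eta_n\to\eta$. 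Some argument of this kind --- a tail bound on $\inf_{s\geq t}X^n_s$ that is uniform in $n$ --- is indispensable, and without it your proof of the explosive half is incomplete.
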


Theorem~\ref{StabilityTheorem} also allows us to simulate CBI
processes. Indeed, if we can simulate random variables with
distribution $X_t$ and $Y_t$ for every $t>0$, we can then approximately
simulate the process $Z$ as the right-hand derivative of the solution
to ${\ivp_\sigma( X,x+Y )}$. (Alternatively, if we can
approximate
$X$ and~$Y$, e.g., by compound Poisson processes with drift, we can
also apply $\ivp_\sigma$ to approximate the paths of $Z$.) The
procedure ${\ivp_\sigma( X,x+Y )}$ actually corresponds
to an
Euler method of span $\sigma$ to solve ${\ivp( X,x+Y
)}$. Theorem~\ref{StabilityTheorem} implies the convergence of the Euler method as
the span goes to zero when applied to ${\ivp( X,x+Y )}$,
even with
the discontinuous driving functions $X$ and $Y$!

We also give an application of Theorem~\ref{StabilityTheorem} to
limits of Galton--Watson processes with immigration. Let $X^n$ and
$Y^n$ be independent random walks with step distributions $\mu_n$ and
$\nu_n$ supported on $ \{ -1,0,1,\ldots\}$ and $ \{
0,1,2,\ldots\}$, and for any $k_n\geq0$, define recursively
the sequences $C^n$ and
$Z^n$ by setting
\[
C^n_0= Z^n_0=k_n,\qquad
Z^n_{m+1}=k_n+X^n_{C^n_{m}}+Y^n_{m+1}\quad
\mbox{and}\quad C^n_{m+1}=C^n_m+Z^n_{m+1}.
\]
As discussed in Section~\ref{MotivationSubsection}, the sequence
$Z^n$ is a Galton--Watson process with immigration with offspring and
immigration distributions $\mu_n$ and $\nu_n$. However, if $X^n$ and
$Y^n$ are extended by constancy on $[m,m+1)$ for $m\geq0$ (keeping the
same notation), then $C^n$ is the approximation of the Lamperti
transformation with span~$1$ applied to $X^n$ and $Y^n$ and $Z^n$ is
the right-hand derivative of $C^n$. In order to apply Theorem \ref
{StabilityTheorem} to these processes, define the scaling operators
$S_{a}^b$ by
\[
{S_{a}^bf ( t )}=\frac{1}{b}{f ( at )}.
\]

%
\begin{corollary}
\label{LimitTheoremGWI}
Suppose the existence of sequences $a_n,b_n$ such that
\[
X^n_{a_n}/n \quad\mbox{and}\quad Y^n_{b_n}/n
\]
converge weakly to the infinitely divisible distributions $\mu$ and
$\nu$ corresponding to a spectrally positive L\'evy process and a
subordinator; denote by $\Psi$ and $\Phi$ their Laplace exponents.
Suppose that $b_n\to\infty$ and, for any $\alpha>0$, $a_{\lfloor
\alpha n\rfloor}/n\to\infty$. Let $k_n\to\infty$, and suppose that either
\[
\frac{k_n b_{\lfloor k_n/x\rfloor}}{x a_{\lfloor k_n/x\rfloor}}\to c\in
[0,\infty) \quad\mbox{or}\quad \frac{x a_{\lfloor k_n/x\rfloor}}{k_n
b_{\lfloor k_n/x\rfloor}}\to0
\]
as $n\to\infty$. Setting $e_n=b_{\lfloor k_n/x\rfloor}$ in the first case
and $e_n=xa_{\lfloor k_n/x\rfloor}/k_n$ in the second, we have that
\[
S^{k_n/x}_{e_n}Z^n\vadjust{\goodbreak} 
\]
converges in distribution, toward a ${\cbi( c\Psi,\Phi
)}$ in the
first case and toward a ${\cb( \Psi)}$ in the second. The
convergence takes place in the Skorohod $J_1$ topology if $\Psi$ is
nonexplosive and in the uniform $J_1$ topology, otherwise.
\end{corollary}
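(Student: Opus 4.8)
The plan is to realize $S^{k_n/x}_{e_n}Z^n$ as the right-hand derivative of the solution of a discretized problem $\imf{\ivp_{\sigma_n}}{f_n,g_n}$ with $\sigma_n\to 0$ whose data $\paren{f_n,g_n}$ converge in law, in the Skorohod $J_1$ topology, to an admissible breadth-first pair $\paren{X,x+Y}$ (resp.\ $\paren{X,x}$), and then to invoke Theorem \ref{StabilityTheorem} for the convergence of the right-hand derivatives and Theorem \ref{CBIRepThm} to identify the limit.

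\emph{Step 1: the rescaled Euler scheme.} By the discussion preceding the statement, after extending $X^n$ and $Y^n$ to \cadlag\ step functions the process $C^n$ is --- up to a deterministic translation of $X^n$ by $k_n$ and the unit reindexing of the immigration increments, both negligible after rescaling --- the span-$1$ Euler scheme $\imf{\ivp_1}{X^n,k_n+Y^n}$, and $Z^n=D_+C^n$; the positive part in $\ivp_\sigma$ never acts since $Z^n\geq 0$. Using that $S$ commutes with $D_+$ up to an adjustment of the space-scale and that, if $c$ solves $\imf{\ivp_\sigma}{f,g}$, then $t\mapsto\gamma^{-1}\imf{c}{\alpha t}$ solves $\imf{\ivp_{\sigma/\alpha}}{\tilde f,\tilde g}$ with $\imf{\tilde f}{u}=(\alpha/\gamma)\imf{f}{\gamma u}$ and $\imf{\tilde g}{s}=(\alpha/\gamma)\imf{g}{\alpha s}$, one checks that $S^{k_n/x}_{e_n}Z^n=D_+c^n$ where $c^n$ is the unique solution of $\imf{\ivp_{\sigma_n}}{f_n,g_n}$ with $\sigma_n=1/e_n$, $\imf{f_n}{u}=\tfrac{x}{k_n}X^n\paren{\tfrac{e_nk_n}{x}u}$ and $\imf{g_n}{s}=x+\tfrac{x}{k_n}Y^n\paren{e_ns}$. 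In particular $\sigma_n\to 0$ (since $e_n\to\infty$ under the hypotheses), and the constant part of $g_n$ is exactly $x$, so the limit process will start at $x$.

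\emph{Step 2: convergence of the driving pair.} The hypotheses $X^n_{a_n}/n\Rightarrow\mu$ and $Y^n_{b_n}/n\Rightarrow\nu$ are promoted, by the classical functional limit theorem for triangular arrays of random walks (Skorohod; see also Grimvall and Kawazu--Watanabe), to functional $J_1$-convergence of the correspondingly rescaled walks towards the spLp $X$ of exponent $\Psi$ and the subordinator $Y$ of exponent $\Phi$. The side conditions are precisely calibrated so that the time- and space-scalings built into $f_n$ and $g_n$ match one such rescaling: $a_{\floor{\alpha n}}/n\to\infty$ guarantees that the deterministic time change inside $f_n$ sweeps out all of $[0,\infty)$, and the two regimes yield, in the first case, $f_n\Rightarrow$ a spLp of exponent $c\Psi$ (the constant $c$ coming from the ratio between the branching and immigration scales absorbed into $e_n=b_{\floor{k_n/x}}$) together with $g_n\Rightarrow x+Y$, and in the second case $f_n\Rightarrow$ a spLp of exponent $\Psi$ while the immigration fluctuation $\tfrac{x}{k_n}Y^n(e_n\cdot)$ tends to $0$, so $g_n\Rightarrow x$. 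Since $X^n$ and $Y^n$ are independent, $\paren{f_n,g_n}\Rightarrow\paren{f,g}$ holds jointly in $J_1\times J_1$, the limit being an admissible breadth-first pair.

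\emph{Step 3: stability, identification, and the main obstacle.} By Skorohod's representation theorem we may assume $\paren{f_n,g_n}\to\paren{f,g}$ almost surely in $J_1\times J_1$. For the limit pair, Proposition \ref{uniquenessForLevy} yields a.s.\ uniqueness of the solution to $\imf{\ivp}{f,g}$ and of \eqref{IVPWithInequalities}, and Proposition \ref{JumpingConditionProposition} gives that $f\circ c$ and $g$ a.s.\ share no jump times, $c$ being that solution. Theorem \ref{StabilityTheorem} then gives, pathwise, $D_+c^n\to D_+c$ a.s., in the Skorohod $J_1$ topology when $\Psi$ is non-explosive and in the uniform $J_1$ topology when $\Psi$ is explosive --- in the latter case using also the uniform-in-$n$ divergence of $\imf{f_n}{u}$ as $u\to\infty$, inherited from the uniformly diverging drift of the rescaled reproduction walks. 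Undoing the coupling, $S^{k_n/x}_{e_n}Z^n\Rightarrow D_+c$ in the stated topology; and by Theorem \ref{CBIRepThm}, $D_+c$ is the Lamperti transform of $\paren{X,x+Y}$, hence a $\imf{\cbi}{c\Psi,\Phi}$ started at $x$, in the first case, while in the second case $g\equiv x$ is constant so $D_+c$ is the Lamperti transform of $X+x$, hence a $\imf{\cb}{\Psi}$ started at $x$. The substantive work is Step 2: translating the three numerical sequences $\paren{a_n}$, $\paren{b_n}$, $\paren{k_n}$ and the two alternatives into genuine $J_1$-convergence of $\paren{f_n,g_n}$, in particular controlling the composition with the random time change $c^n$ (which is exactly why one routes everything through $\imf{\ivp_\sigma}{\cdot,\cdot}$ rather than manipulating $X^n\circ C^n$ directly) and verifying the uniform growth of $f_n$ needed in the explosive regime; everything downstream of a clean Step 2 is a direct application of Theorem \ref{StabilityTheorem}, Propositions \ref{uniquenessForLevy} and \ref{JumpingConditionProposition}, and Theorem \ref{CBIRepThm}.
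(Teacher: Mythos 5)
Your overall route is the same as the paper's: rewrite $S^{k_n/x}_{e_n}Z^n$ as $\imf{h^{\sigma_n}}{f_n,g_n}$ via the scaling identity for the Euler scheme (the paper's Lemma \ref{ScalingAndDiscretizationLemma}), upgrade the one-dimensional hypotheses to functional $J_1$ convergence of the rescaled walks, couple almost surely by Skorohod representation, and conclude with Propositions \ref{uniquenessForLevy} and \ref{JumpingConditionProposition} and Theorems \ref{StabilityTheorem} and \ref{CBIRepThm}. For the non-explosive case this is complete modulo the classical Skorohod/Grimvall functional limit theorem, which you correctly identify and may cite.

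There is, however, a genuine gap in your treatment of the explosive case. To apply the uniform $J_1$ part of Theorem \ref{StabilityTheorem} you need, in effect, that for every $M$ the rescaled walks eventually stay above $M$, with probability close to one \emph{uniformly in $n$}; you dispose of this by appealing to ``the uniformly diverging drift of the rescaled reproduction walks.'' This does not work as stated: when $\Psi$ is explosive one has $\imf{\Psi'}{0+}=-\infty$, so the limit (and hence the approximating walks, after rescaling) has infinite mean and there is no finite drift to invoke. The correct mechanism is different. Since the increments of $X^n$ are bounded below by $-1$, the negative of the all-time infimum of $X^n$ is geometric with parameter $e^{-\eta_n}$, where $\eta_n$ is the positive root of the moment generating function of the step distribution; the required uniform control amounts to $n\eta_n\to\eta$, $\eta$ being the positive root of $\Psi$. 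This in turn needs the convergence of Laplace transforms
\begin{esn}
\esp{e^{-\lambda X^n_{a_n}/n}}\longrightarrow e^{\imf{\Psi}{\lambda}},
\end{esn}
which is \emph{not} a consequence of the assumed weak convergence of $X^n_{a_n}/n$ (exponential moments are not continuous under weak convergence) and must be proved separately; the paper devotes Lemma \ref{ConvergenceOfGeneratingFunctionsLemma} to it, using first-passage times, optional sampling for the exponential martingale, and the convergence of the walks' infima over independent exponential horizons. Without this ingredient your Step 3 only delivers the Skorohod $J_1$ convergence on compacts of $[0,\tau)$, not the uniform $J_1$ convergence claimed in the explosive regime.
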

%
When $\Psi$ is nonexplosive and $\Phi=0$, the above theorem was
proved by \citet{MR0362529}. He also proved the convergence of
finite-dimensional distributions in the explosive case, which we
complement with a limit theorem. For general $\Phi$, but nonexplosive
$\Psi$, a similar result was proven by \citet{MR2225068}. However, as
will be seen in the proof (which relies on the stability of the
Lamperti transformation stated in Theorem~\ref{StabilityTheorem}), if
the convergence of $S^n_{a_n}X^n$ and $S^n_{b_n}Y^n$ takes place almost
surely, then $S^n_{e_n}Z^n$ also converges almost surely.


The stability result of Theorem~\ref{StabilityTheorem} applies not
only in the Markovian case of CBI processes. As an example, we
generalize work of \citet{MR1681110} who considers the scaling limits
of conditioned Galton--Watson processes in the case of the Poisson
offspring distribution. Let $\mu$ be an offspring distribution with
mean $1$ and suppose that $Z^{n}$ is a Galton--Watson process started
at $k_n$ and conditioned on
\[
\sum_{i=0}^\infty Z^{n}_i=n.
\]
We shall consider the scaling limit of $Z^{n}$ as $n\to\infty$
whenever the shifted reproduction law $\tilde\mu_k=\mu_{k+1}$ is in
the domain of attraction of a stable law without the need of centering.
The scaling limit of a random walk with step distribution $\tilde\mu$
is then a spectrally positive stable law of index $\alpha\in(1,2]$
with which one can define, for every $l>0$ the first passage bridge
$F^{l}$ starting at $l$ and ending at $0$ of length $1$ of the
associated L\'evy process. Informally this is the stable process
started at $l$, conditioned to be above $0$ on $[0,1]$ and conditioned
to end at $0$ at time $1$. This intuitive notion was formalized by
\citet{MR2534486}. The Lamperti transform of $F^l$ will be the
right-hand derivative of the unique solution to ${\ivp(
F^l,0 )}$.
%
%
\begin{theorem}
\label{ExtensionOfPitmansTheorem}
Let $Z^{n}$ be a Galton--Watson process with critical offspring law
$\mu$ which starts at $k_n$ and is conditioned on $\sum_{i=1}^\infty
Z^{n}_i=n$. Let $S$ be a random walk with step distribution $\mu$ and
suppose there exist constants $a_n\to\infty$ such that $ (
S_n-n )/a_n$ converges in law to a spectrally positive stable
distribution with Laplace exponent $\Psi$. Let $X$ be a L\'evy process
with Laplace exponent $\Psi$ and $F^l$ its first passage bridge from
$l>0$ to $0$ of length $1$. If $k_n/a_n\to l$, then the sequence
\[
S_{n/a_n}^{a_n}Z^{n} 
\]
converges in law to the Lamperti transform of $F^{l}$ in the Skorohod
$J_1$ topology.
\end{theorem}
When $\alpha=2$, the process $F^l$ is a Bessel bridge of dimension $3$
between $l$ and $0$ of length $1$, up to a normalization factor. In
this case, Pitman [(\citeyear{MR1681110}), Lemma 14] tells us that the Lamperti
transform $Z^l$ of $F^l$ satisfies the SDE
\[
\cases{ dZ^l_v=2\sqrt{Z^l_v}
\,dB_v+ \biggl[ 4-\dfrac{ ( Z^l_v
)^2}{1-\int_0^v Z^l_u \,du} \biggr] \,dv,
\vspace*{2pt}\cr
Z^l_0=l,}
\]
driven by a Brownian motion $B$, and it is through stability theory for
SDEs that \citet{MR1681110} obtains Theorem \ref
{ExtensionOfPitmansTheorem} when $\mu$ is a Poisson distribution with
mean $1$.
Theorem~\ref{ExtensionOfPitmansTheorem} is a complement to the
convergence of Galton--Watson forests conditioned on their total size
and number of trees given in
\citet{MR2534486}.
When $l=0$, our techniques cease to work. Indeed, the corresponding
process $F^0$ would be a normalized Brownian excursion above zero, and
the problem ${\ivp( F^0,0 )}$ does not have a unique
solution, as
discussed at the beginning of Section~\ref{ODESection}. Hence, even if
our techniques yield tightness in the corresponding limit theorem with
$l=0$, we would have to give further arguments to prove that any
subsequential limit is the correct solution $\ivp( F^0,0
)$. The
limit theorem when $l=0$ and $\alpha=2$ was conjectured by \citet
{MR1166406}, and proved by \citet{MR1608230} by analytic methods. For
any $\alpha\in(1,2]$, the corresponding statement was stated and
proved by \citet{kersting1998height} by working with the usual Lamperti
transformation, which chooses a particular solution to ${\ivp(
F^0,0 )}$.

The paper is organized as follows. Theorem~\ref{ExistenceTheorem},
Proposition~\ref{uniquenessForLevy} and Corollary \ref
{LambertProblemCorollary} are proved in Section~\ref{ODESection} which
focuses on the analytic aspects of the Lamperti transformation and its
basic probabilistic implications. The representation CBI processes of
Theorem~\ref{CBIRepThm} is then proved in Section \ref
{CBIRepSection}, together with Proposition \ref
{JumpingConditionProposition}, Corollaries~\ref{LIL} and \ref
{CBIExplosionCorollary}. Finally, Section~\ref{StabilitySection} is
devoted to the stability of the Lamperti transformation with a proof of
Theorem~\ref{StabilityTheorem}, Proposition \ref
{UniquenessForIVPWithInequalitiesProposition},
Corollaries~\ref{ContinuityOfCBILawsCorollary},
\ref{LimitTheoremGWI} and Theorem \ref
{ExtensionOfPitmansTheorem}. (Corollaries \ref
{KawazuWatanabeCorollary} and~\ref{NoDownwardJumpsCorollary} are
considered to follow immediately from Theorem~\ref{CBIRepThm}; proofs
have been omitted.)

\section{The generalized Lamperti
transformation as an initial value problem}
\label{ODESection}
Let $ ( f,g )$ be an admissible breadth-first pair, meaning that
$f$ and $g$ are c\`adl\`ag functions with $g$ increasing, $f$ without
negative jumps and ${f ( 0 )}+{g ( 0 )}\geq0$.
We begin by
studying the existence of a nonnegative c\`adl\`ag function $h$ which
satisfies
%
%
\begin{equation}
\label{genLampTrans} {h ( t )}={f \biggl( \int_0^t
{h ( s )} \,ds \biggr)}+{g ( t )};
\end{equation}
a priori there might be many solutions.

When $g$ is identically equal to zero, a solution is found by the
method of time-changes: let $\tau$ be the first hitting time of zero
by $f$, let
\[
i_t=\int_0^t \frac{1}{{f ( s\wedge\tau)}} \,ds\vadjust{\goodbreak}
\]
and consider its right-continuous inverse $c$ so that
\[
h=f\circ c
\]
satisfies (\ref{genLampTrans}) with $g=0$, and it is the only solution
for which zero is absorbing. A generalization of this argument is found
in \citet{MR838085}, Chapter 6, Section 1. In this case the
transformation which takes $f$ to $h$ is called the Lamperti
transformation, introduced by \citet{1967LampertiCSBP}. There is a
slight catch: if $f$ is never zero and goes to infinity, then $h$
exists up to a given time (which might be infinite) when it also goes
to infinity. After this time, which we call the explosion time, we set
$h=\infty$. With this definition, note that $c$ and $h$ become
infinite at the same time.

Solutions to (\ref{genLampTrans}) are not unique even when $g=0$ as
the next example shows: take ${f ( x )}=\sqrt{
\llvert1-x\rrvert}$, $l>0$,
and consider
\[
{h_1 ( t )}=\frac{ ( 2-t )^+}{2} \quad\mbox{and}\quad {h_2 ( t )}=
\cases{ \dfrac{2-t}{2}, &\quad if $t\leq2$,
\vspace*{2pt}\cr
0, &\quad if $2\leq t\leq2+l$,
\vspace*{2pt}\cr
\dfrac{t-2-l}{2}, &\quad if $t\geq2+l$.}
\]
Then $h_1$ and $h_2$ are both solutions to (\ref{genLampTrans}). As we
discussed in the \hyperref[introSection]{Introduction},
a~probabilistically relevant example of nonuniqueness is obtained when
$g=0$ and $f$ is the typical sample path of a normalized Brownian
excursion $e= ( e_t,t\geq1 )$. [See Chapter 11, Section 3 of
\citet{MR1725357} for its definition as a \mbox{$3$-dimensional} Bessel
bridge.] Indeed, with probability $1$, $e$ has a continuous trajectory
which is positive exactly on $(0,1)$. Hence, $0$ is a solution to
${\ivp( e,0 )}$. However, its link with the $3$-dimensional Bessel
process (and time reversal) allows one to prove that $\sqrt{s}={o ( e_s
)}$ as $s\to0+$ (and a corresponding statement as $s\to1{-}$) so that
almost surely
\[
\int_0^1\frac{1}{e_s} \,ds<\infty.
\]
Hence, one can define the Lamperti transform of $e$, which is a
nontrivial solution to ${\ivp( e,0 )}$. The Lamperti
transformation is well defined under more general excursion laws as
discussed by \citet{MR2018924}.

We propose to prove Theorem~\ref{ExistenceTheorem} by the following
method: we first use the solution for the case $g=0$ to establish the
theorem when $g$ is piecewise constant. When $g$ is strictly
increasing, we approximate it by a strictly decreasing sequence of
piecewise constant functions $g_n> g$ and let $h_n$ be the solution to
(\ref{genLampTrans}) which uses~$g_n$. We then consider the primitive
$c_n$ of $h_n$ starting at zero, show that it converges, and this is
enough to prove the existence of a function whose right-continuous
derivative exists and solves (\ref{genLampTrans}). Actually, it is by
using primitives that one can compare the different solutions to (\ref
{genLampTrans}) (and study uniqueness), and this is the point of view
adopted in what follows. To this end, we generalize\vadjust{\goodbreak} (\ref
{genLampTrans}) into an initial value problem for the function $c$.
\[
{\ivp( f,g,x )}=\cases{ {c'_+ ( t )}={f\circ c ( t )}+{g ( t )},
\vspace*{1pt}\cr
{c ( 0 )}=x.}
\]
[The most important case for us is $x=0$, and we will write ${\ivp
( f,g )}$ when referring to it.] We shall term:
\begin{itemize}
\item$f$ the \textit{reproduction function},
\item$g$ the \textit{immigration function},
\item$x$ the \textit{initial cumulative population},
\item$c$ the \textit{cumulative population}, and
\item$c'_+$ the \textit{population profile}.
\item A solution $c$ to ${\ivp( f,g,x )}$ is said to have
\textit
{no spontaneous generation} if the condition ${c'_+ ( t
)}=0$ implies
that ${c ( t+s )}={c ( t )}$ as long as ${g
( t+s )}={g ( t )}$.
\end{itemize}
In the setting of Theorem~\ref{ExistenceTheorem}, spontaneous
generation is only relevant when $g$ is piecewise constant, and it will
be the guiding principle to chose solutions in this case.

A solution to ${\ivp( f,g,x )}$ without spontaneous
generation when
$g$ is a constant $\gamma$ is obtained by setting ${f_x (
s )}={f ( x+s )}+\gamma$, calling $h_x$ the Lamperti
transform of $f_x$ and setting
\[
c_t=x+\int_0^t {h_x (
s )} \,ds.
\]
We then have
\[
{c'_+ ( t )}={h_x ( t )}={f_x \biggl(
\int_0^t {h_x ( s )} \,ds \biggr)}={f
\biggl( x+\int_0^t {h_x ( s )} \,ds
\biggr)}+\gamma={f \bigl( {c ( t )} \bigr)}+{g ( t )}.
\]

Let $g$ be piecewise constant, say
\[
g=\sum_{i=1}^n \gamma_i
\si_{[t_{i-1},t_i)}
\]
with $\gamma_1<\gamma_2<\cdots<\gamma_n$ and $0=t_0<t_1<\cdots
<t_n$. Let us solve (\ref{genLampTrans}) by pasting the solutions on
each interval: let $\psi_1$ solve ${\ivp( f,\gamma_1,0
)}$ on
$[0,t_1]$ without spontaneous generation. Let $c$ equal $\psi_1$ on
$[0,t_1]$. Now, let $\psi_2$ solve ${\ivp( f,\gamma_2,{c
( t_1 )} )}$ without spontaneous generation. [If ${c
( t_1 )}=\infty$,
we set $\psi_2=\infty$.] Set ${c ( t )}={\psi_2 (
t-t_1 )}$ for
$t\in[t_1,t_2]$ so that $c$ is continuous. Also, for $t\in[t_1,t_2]$,
we have
\[
{c'_+ ( t )}={\psi'_{2+} (
t-t_1 )}={f \bigl( {\psi_2 ( t-t_1 )}
\bigr)}+\gamma_2={f \bigl( {c ( t )} \bigr)}+{g ( t )}.
\]
We continue in this manner. Note that if $c'_+$ reaches zero in
$[t_{i-1},t_i)$, say at $t$, then $c$ is constant on $[t,t_i)$ and that
$c'_+$ solves (\ref{genLampTrans}) when $g$ is piecewise constant. By
uniqueness of solutions to (\ref{genLampTrans}) which are absorbing at
zero when $g=0$, we deduce the uniqueness of solutions to ${\ivp
( f,g,0 )}$ without spontaneous generation when the immigration is
piecewise constant.\vadjust{\goodbreak}

We first tackle the nonnegativity assertion of Theorem \ref
{ExistenceTheorem}. Since $f$ is only defined on $[0,\infty)$,
negative values of $c$ do not make sense in equation (\ref
{genLampTrans}). One possible solution is to extend $f$ to $\re$ by
setting ${f ( x )}={f ( 0 )}$ for $x\leq0$.
%
%
\begin{lem}
\label{NonNegativityLemma}
Any solution $h$ to (\ref{genLampTrans}) is nonnegative.
\end{lem}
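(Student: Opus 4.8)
The plan is to reduce the claim to a statement about the reproduction function $f$ alone, using the structure $\imf{h}{t}=\imf{f}{\imf{c}{t}}+\imf{g}{t}$ where $c$ is the primitive of $h$ starting at zero. First I would observe that since $g$ is non-decreasing and $h$ solves \eqref{genLampTrans}, at $t=0$ we have $\imf{h}{0}=\imf{f}{0}+\imf{g}{0}\geq 0$ by admissibility, so the solution starts non-negative. The danger is only that $c$ could decrease into the region where $f$ has been (somewhat arbitrarily) extended by the constant value $\imf{f}{0}$; I want to rule out that $c$ ever becomes negative, for then $h=c'_+$ is genuinely driven by the original $f$ on $[0,\infty)$ and the extension is irrelevant.

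The key step is a first-passage / minimality argument. Suppose, for contradiction, that $h$ takes a negative value somewhere. Let $t_0=\inf\set{t\geq 0:\imf{h}{t}<0}$. Because $h$ is \cadlag\ and $\imf{h}{0}\geq 0$, at $t_0$ we have $\imf{h}{t_0-}\geq 0$, and since $h$ has no negative jumps downward coming from $f$ having no negative jumps and $g$ being non-decreasing — one checks that $\jump{h}{t}=\jump{(f\circ c)}{t}+\jump{g}{t}\geq 0$ — we get $\imf{h}{t_0}\geq 0$ as well. Hence $h\geq 0$ on $[0,t_0]$, so $c$ is non-decreasing on $[0,t_0]$, so $\imf{c}{t_0}\geq \imf{c}{0}=0$. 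Thus just after $t_0$, the argument $\imf{c}{t}$ of $f$ is still $\geq 0$ (by right-continuity of $c$ it stays in a neighborhood of $\imf{c}{t_0}\geq 0$ for a short time, and it cannot decrease below $0$ until $h$ has been negative for a positive length of time), so $\imf{f}{\imf{c}{t}}$ uses the original values of $f$; meanwhile $\imf{g}{t}\geq \imf{g}{t_0}$. The point is then that immediately to the right of $t_0$ the equation forces $\imf{h}{t}=\imf{f}{\imf{c}{t}}+\imf{g}{t}$ with $\imf{c}{t}\geq 0$ and $\imf{g}{t}\geq\imf{g}{t_0}\geq\imf{h}{t_0}-\imf{f}{0}$; tracking these inequalities carefully, any excursion of $h$ below zero would require $f$ to have jumped down, which it cannot, or would contradict $t_0$ being the infimum. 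So no such $t_0$ exists.

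An alternative, perhaps cleaner, route is to compare with the pure-$f$ (no immigration) Lamperti construction: on a maximal interval where $c$ stays in $[0,\infty)$, write $h$ via a time-change of $f\circ(\text{something})+g$ and invoke the case $g=0$ already discussed, where absorption at zero is the selected behavior; since $g\geq 0$ only pushes $h$ upward relative to that case, $h$ stays above the non-negative pure-branching solution. Either way, the main obstacle is handling the first instant at which $h$ could reach $0$ from above: one must argue that the dynamics do not permit $h$ to cross strictly below zero, which rests precisely on the two defining features of an admissible breadth-first pair — $f$ has no negative jumps and $g$ is non-decreasing (so $\imf{g}{0}\geq -\imf{f}{0}$ is propagated, in the sense that $\imf{g}{t}\geq\imf{g}{0}$ keeps the forcing term from dropping). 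The rest is routine bookkeeping with \cadlag\ functions and right-hand derivatives.
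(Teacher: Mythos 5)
Your setup (extend $f$ by constancy below $0$, pass to the primitive $c$, locate a first time $t_0$ at which $h$ would go negative, note $\imf{h}{t_0}\geq 0$ and $\imf{c}{t_0}\geq 0$) matches the paper's, but the heart of the proof is missing: you never actually derive a contradiction from $\imf{h}{t}<0$ for $t$ just after $t_0$. The phrase ``tracking these inequalities carefully, any excursion of $h$ below zero would require $f$ to have jumped down'' points at the wrong mechanism --- no downward jump of $f$ is involved --- and the auxiliary inequality $\imf{g}{t_0}\geq\imf{h}{t_0}-\imf{f}{0}$ is false in general, since $\imf{g}{t_0}=\imf{h}{t_0}-\imf{f}{\imf{c}{t_0}}$ and $\imf{f}{\imf{c}{t_0}}$ need not be comparable to $\imf{f}{0}$. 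The idea that actually closes the argument (and that the paper uses) is that when $h<0$ the primitive $c$ \emph{decreases}, hence revisits a level it already attained at an earlier time $t_1$ at which $h$ was strictly positive; if $t_2>t_1$ lies in the negative excursion and $\imf{c}{t_2}=\imf{c}{t_1}$, then $f\circ c$ takes the \emph{same} value at both times while $g$ has not decreased, yielding $0<\imf{h}{t_1}=\imf{f\circ c}{t_2}+\imf{g}{t_1}\leq \imf{f\circ c}{t_2}+\imf{g}{t_2}=\imf{h}{t_2}<0$. Producing such a pair $\paren{t_1,t_2}$ is where the work lies: the paper works at a zero $\tau$ of $h$ that is followed by an interval of negativity, takes $\tau_1$ to be the last time before $\tau$ that $c$ is below $\imf{c}{\tau+\eps}$ and $\tau_2$ the first subsequent time $c$ reaches $\imf{c}{\tau}$, and extracts $t_1$ from $\int_{\tau_1}^{\tau_2}\imf{h}{r}\,dr>0$. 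Your first-passage time $t_0$ alone does not suffice, because $h$ may oscillate in sign just after $t_0$, so that $\imf{c}{t}$ can exceed $\imf{c}{t_0}$; the level $\imf{c}{t}$ is then one never visited on $[0,t_0]$ and no comparison with the pre-$t_0$ history is available.

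Your alternative route is not viable as stated either: the assertion that ``$g\geq 0$ only pushes $h$ upward relative to the $g=0$ case'' is exactly the kind of comparison that fails here without strict inequalities --- the paper notes after Lemma \ref{strictMonotonicityOfIVPWRTImmigration} that $c\leq\tilde c$ cannot be deduced from $g\leq\tilde g$ with the same reproduction function, owing to non-uniqueness --- and admissibility only requires $\imf{f}{0}+\imf{g}{0}\geq 0$, not $g\geq 0$.
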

\begin{pf}
Let $h$ solve (\ref{genLampTrans}) where $f$ is extended by constancy
on $(-\infty,0]$, and define
\[
{c ( t )}=\int_0^t{h ( s )} \,ds,
\]
so that $c$ solves ${\ivp( f,g )}$. 
We prove that $h\geq0$ by contradiction. Assume there exists $t\geq0$
such that ${h ( t )}<0$. Note that since $h$ has no
negative jumps,
$h$ can only reach negative values continuously, and, since $h$ is
right-continuous, if it is negative at a given $t$, then there exists
$t'>t$ such that $h$ is negative on $[t,t')$. Hence there exists $\eps
>0$ such that
\[
\bigl\{ t\geq0\dvtx{h ( t )}=0\mbox{ and }h<0\mbox{ on }(t,t+\eps)
\bigr\}
\neq
\varnothing.
\]
Let $\tau$ be its infimum. We assert that $\tau>0$ and ${c (
\tau)}>0$. Indeed, if $\tau=0$, then $c$ would be strictly
decreasing and
negative on $(0,\eps)$, which would imply that
\[
{h ( t )}={f\circ c ( t )}+{g ( t )}={f ( 0 )}+{g ( t )}\geq{f ( 0
)}+{g ( 0 )}
\geq0 \qquad\mbox{for } t\in(0,\eps),
\]
a contradiction. A similar argument tells us that ${c ( \tau
)}>0$.
We finish the proof by showing the existence of $t_1\leq\tau$ and
$t_2\in(\tau,\tau+\eps)$ such that ${h ( t_1 )}>0$ and
${c ( t_1 )}={c ( t_2 )}$, implying the contradiction
\[
0< {h ( t_1 )}={f\circ c ( t_1 )}+{g ( t_1
)}={f\circ c ( t_2 )}+{g ( t_1 )}\leq{f\circ c (
t_2 )}+{g ( t_2 )}\leq0.
\]
Indeed, given that ${c ( \tau)}>0$ we can assume that ${c
( \tau+\eps)}>0$ by choosing a smaller~$\eps$, and then
let $\tau_1$ be the last time before $\tau$ that $c$ is below ${c ( \tau
+\eps)}$ and $\tau_2$ the first instant after $\tau_1$ that $c$
equals ${c ( \tau)}$. Note that $\tau_2\leq\tau$.
Since
\[
\int_{\tau_1}^{\tau_2}{h ( r )} \,dr={c (
\tau_2 )}-{c ( \tau_1 )}={c ( \tau)}-{c ( \tau+\eps)}>0,
\]
there exists $r\in(\tau_1,\tau_2)$ such that $h ( r
)>0$ and by
construction ${c ( r )}\in c ( (\tau,\break\tau+\eps) )$.
\end{pf}

%
\subsection{Monotonicity and existence}
We now establish a basic comparison lemma for solutions to ${\ivp
( f,g )}$ which will lead to the existence assertion of
Theorem \ref
{ExistenceTheorem}.
%
%
\begin{lem}
\label{strictMonotonicityOfIVPWRTImmigration}
Let $c$ and $\tilde c$ solve ${\ivp( f,g )}$ and $\ivp
(\tilde
f,\tilde g)$
. If
\[
g ( 0 )+{f ( 0 )}<\tilde g ( 0 )+{\tilde f ( 0 )},\qquad f\leq\tilde f, g\leq
\tilde g
\]
and either $g_-<\tilde g_-$ or $f_-<\tilde f_-$, then $c_t<\tilde c_t$
for every $t$ that is strictly positive and strictly smaller than the
explosion time of $c$.\vadjust{\goodbreak}
\end{lem}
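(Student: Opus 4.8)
The plan is to argue by contradiction using the first passage time of $c$ at $\tilde c$. Recall that, being primitives of the \cadlag\ functions $f\circ c+g$ and $\tilde f\circ\tilde c+\tilde g$, both $c$ and $\tilde c$ are continuous and, by Lemma \ref{NonNegativityLemma}, non-decreasing on the intervals where they are finite, with right derivatives $c'_+=f\circ c+g$ and $\tilde c'_+=\tilde f\circ\tilde c+\tilde g$. The hypothesis $g(0)+f(0)<\tilde g(0)+\tilde f(0)$ says $c'_+(0)<\tilde c'_+(0)$; since $c(t)/t\to f(0)+g(0)$ and $\tilde c(t)/t\to\tilde f(0)+\tilde g(0)$ as $t\downarrow0$ (by continuity of $c,\tilde c$ and right-continuity of $f,g,\tilde f,\tilde g$), we obtain $c_t<\tilde c_t$ for all small $t>0$. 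Put $\tau=\inf\{t>0:c_t\ge\tilde c_t\}$. If $\tau$ is at least the explosion time of $c$, then $c<\tilde c$ throughout the range in question and we are done; so assume $\tau$ is strictly smaller. A short argument (the solutions are finite before their explosion times) then shows $\tilde c$ is finite on $[0,\tau]$, hence by continuity $c_\tau=\tilde c_\tau=:a$, where $a>0$ (since $\tilde c$ is positive on $(0,\tau]$) and $c<\tilde c$ on $(0,\tau)$. The whole point is to contradict this configuration.

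The first observation is that in fact $c<a$ on $(0,\tau)$: if $c$ reached $a$ at some $\tau_0<\tau$, then $c\equiv a$ on $[\tau_0,\tau]$ and, being squeezed between $c$ and $\tilde c_\tau=a$, so would $\tilde c$ be, making $\tau_0$ a passage time smaller than $\tau$. In particular $c_r\uparrow a$ as $r\uparrow\tau$, so $f(c_r)\to f_-(a)$, while of course $g(r)\to g_-(\tau)$. Now there are two cases, according to the behaviour of $\tilde c$ near $\tau$.

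\emph{Case 1: $\tilde c<a$ on all of $(0,\tau)$.} Then likewise $\tilde c_r\uparrow a$ and $\tilde f(\tilde c_r)\to\tilde f_-(a)$, so, writing $\delta=\tilde c-c$,
\[
\delta'_+(r)=\bigl(\tilde f(\tilde c_r)-f(c_r)\bigr)+\bigl(\tilde g(r)-g(r)\bigr)\ \longrightarrow\ \bigl(\tilde f_-(a)-f_-(a)\bigr)+\bigl(\tilde g_-(\tau)-g_-(\tau)\bigr)=:\Delta\qquad(r\uparrow\tau).
\]
Both summands are non-negative, and one is strictly positive under whichever of $g_-<\tilde g_-$ or $f_-<\tilde f_-$ is assumed, so $\Delta>0$; hence $\delta'_+\ge\Delta/2$ on some $[s^*,\tau)$, and there $\delta_s=-\int_s^\tau\delta'_+\le-\frac{\Delta}{2}(\tau-s)<0$, contradicting $\delta_s>0$. \emph{Case 2: $\tilde c\equiv a$ on $[\tilde\tau_0,\tau]$ for some $\tilde\tau_0\in(0,\tau)$.} Then $\tilde c'_+=0$ on $[\tilde\tau_0,\tau)$ forces $\tilde g(r)=-\tilde f(a)$ there, so $\tilde g_-(\tau)=-\tilde f(a)$, while $0\le c'_+(r)=f(c_r)+g(r)\le\tilde f(c_r)-\tilde f(a)$ on that interval. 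Letting $r\uparrow\tau$ gives $0\le f_-(a)+g_-(\tau)\le\tilde f_-(a)-\tilde f(a)\le0$, so $f_-(a)+g_-(\tau)=0$ and $\tilde f_-(a)=\tilde f(a)$; combining this with $g_-(\tau)\le\tilde g_-(\tau)=-\tilde f(a)$ and $f_-(a)\le\tilde f_-(a)$ yields $f_-(a)=\tilde f_-(a)$ and $g_-(\tau)=\tilde g_-(\tau)$, which contradicts $f_-<\tilde f_-$ at $a$ (resp.\ $g_-<\tilde g_-$ at $\tau$), whichever is assumed.

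The delicate point, and the main obstacle, is precisely the analysis at the passage time $\tau$, and above all Case 2, where one of the solutions has already plateaued at the common touching value; this is where the left-limit hypotheses are used in an essential way (and why they cannot be dropped). The remaining ingredients—continuity and monotonicity of the primitives, the separation estimate at $0$, and the reduction of a plateau of $c$ to an earlier passage time—are routine.
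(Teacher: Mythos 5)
Your proof is correct and follows essentially the same route as the paper's: establish strict separation near $0$, pass to the first meeting time $\tau$ of $c$ and $\tilde c$, and derive a contradiction by comparing the left limits of the right-hand derivatives there. You are in fact more careful than the paper at the one delicate point: the paper simply writes $c'_-(\tau)=f(c(\tau)-)+g(\tau-)<\tilde f(\tilde c(\tau)-)+\tilde g(\tau-)$, implicitly taking both curves to approach the common value strictly from below, whereas your Case~2 treats the plateau possibility explicitly.
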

It is important to note that the inequality $c\leq\tilde c$ cannot be
obtained from the hypothesis $g\leq\tilde g$ using the same
reproduction function $f$. Indeed, we would otherwise have uniqueness
for ${\ivp( f,g )}$ which, as we have seen, is not the
case even
when $g=0$. Also, since both $c$ and $\tilde c$ begin at $0$ and equal
$\infty$ after their explosion time, we always have the inequality
$c\leq\tilde c$ under the conditions of Lemma \ref
{strictMonotonicityOfIVPWRTImmigration}. 
%
\begin{pf*}{Proof of Lemma~\ref{strictMonotonicityOfIVPWRTImmigration}}
Let $\tau=\inf\{ t>0\dvtx{c ( t )}={\tilde c (
t )} \}$. Since
\[
{c'_+ ( 0 )}={f ( 0 )}+{g ( 0 )}<{f ( 0 )}+{\tilde g ( 0 )}={
\tilde c'_+ ( 0 )},
\]
and the right-hand derivatives of $c$ and $\tilde c$ are
right-continuous, then $\tau>0$ and $c<\tilde c$ on $(0,\tau)$. Note
then that the explosion time of $c$ cannot be smaller than $\tau$,
since this would force $\tilde c$ to explode before $\tau$ and so $c$
would equal $\tilde c$ before $\tau$.

We now argue by contradiction. If $\tau$ were finite, we know that
\[
{c ( \tau)}={\tilde c ( \tau)},
\]
leaving us with two cases,
\[
{c ( \tau)}={\tilde c ( \tau)}=\infty
\quad\mbox{and}\quad {c ( \tau)}={\tilde
c ( \tau
)}<\infty.
\]
In the former, we see that $\tau$ is the explosion time of $c$ and so
the statement of Lemma~\ref{strictMonotonicityOfIVPWRTImmigration}
holds. In the latter case,
\begin{eqnarray*}
{c'_- ( \tau)}&=&{f \bigl( c ( \tau)- \bigr)}+{g ( \tau- )}={f
\bigl( {\tilde c ( \tau)}- \bigr)}+{g ( \tau- )}\\
&<&{\tilde f \bigl(
{\tilde c (
\tau)}- \bigr)}+{\tilde g ( \tau- )}={\tilde c'_- ( \tau)}.
\end{eqnarray*}
It follows that $c'_-<\tilde c'_-$ in some interval $(\tau-\eps,\tau
)$. However, for $0<t<\tau$, we have ${c ( t )}<{\tilde c
( t )}$,
and this implies the contradiction
\[
{c ( \tau)}<{\tilde c ( \tau)}.
\]
\upqed\end{pf*}
\begin{pf*}{Proof of Theorem~\ref{ExistenceTheorem}, Existence}
Consider a sequence of piecewise constant c\`adl\`ag functions $g_n$
satisfying ${g_{n+1} ( 0 )}<{g_n ( 0 )}$,
$g_{n+1-}<g_{n-}$ and
such that $g_n\to g$ pointwise. Let $c_n$ solve ${\ivp(
f,g_n )}$
with no spontaneous generation. By Lemma \ref
{strictMonotonicityOfIVPWRTImmigration}, the sequence of nonnegative
functions $c_n$ is decreasing, so that it converges to a limit $c$. Let
\[
\tau=\inf\bigl\{ t\geq0\dvtx{c ( t )}=\infty\bigr\}=\liminf_{n\to
\infty}\bigl
\{ t\geq0\dvtx{c_n ( t )}=\infty\bigr\}.
\]
Since $f$ is right-continuous and $c<c_n$, $f\circ c_n+g_n$ converges
pointwise to $f\circ c+g$ on $[0,\tau)$. By bounded convergence, for
$t\in[0,\tau)$,
\[
{c ( t )}=\lim_{n\to\infty}{c_n ( t )}=\lim_{n\to\infty}\int
_0^t {f\circ c_n ( s
)}+{g_n ( s )} \,ds=\int_0^t {f\circ
c ( s )}+{g ( s )} \,ds.
\]
Hence, $h=c'_+$ proves the existence part of Theorem~\ref{ExistenceTheorem}.
\end{pf*}

\subsection{Uniqueness}
To study uniqueness of ${\ivp( f,g )}$, we use the
following lemma.\vspace*{-2pt}
%
%
\begin{lem}
\label{strictlyIncreasingImmigrationLemma}
If $g$ is strictly increasing, and $c$ solves $\ivp( f,g
)$, then
$c$ is strictly increasing.\vspace*{-2pt}
\end{lem}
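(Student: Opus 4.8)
The plan is a short proof by contradiction. First I would recall that any solution $c$ to $\imf{\ivp}{f,g}$ is, by construction, the primitive of its population profile $h=c'_+=f\circ c+g$, which is non-negative by Lemma \ref{NonNegativityLemma}; hence $c$ is non-negative and non-decreasing. Consequently, if $c$ fails to be strictly increasing on the set where it is finite, there must exist reals $0\le a<b$ with $\imf{c}{a}=\imf{c}{b}<\infty$, and by monotonicity $c$ is then constant, equal to some value $v$, on all of $[a,b]$.

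The key observation is that for every $t\in[a,b)$ the function $c$ is constant on $[t,b]$, so its right-hand derivative at $t$ vanishes; plugging this into the defining equation of $\imf{\ivp}{f,g}$ gives
\[
0=\imf{c'_+}{t}=\imf{f}{\imf{c}{t}}+\imf{g}{t}=\imf{f}{v}+\imf{g}{t},\qquad t\in[a,b).
\]
Evaluating this identity at $t=a$ and at an arbitrary $t\in(a,b)$ and subtracting yields $\imf{g}{t}=\imf{g}{a}$, which contradicts the strict monotonicity of $g$. Therefore $c$ is strictly increasing on $\set{t:\imf{c}{t}<\infty}$; since $c\equiv\infty$ beyond its explosion time, this is the same as saying that $c$ is strictly increasing in the extended-valued sense.

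I do not expect a genuine obstacle here: the argument is elementary once one uses that solutions to $\imf{\ivp}{f,g}$ are non-decreasing (a consequence of $h\ge 0$ from Lemma \ref{NonNegativityLemma}) together with the trivial fact that a non-decreasing function which is not strictly increasing must be constant on some nondegenerate interval. The only point deserving a careful word is that the right-hand derivative of $c$ vanishes at each point of such an interval of constancy, which is immediate from the definition of $c'_+$.
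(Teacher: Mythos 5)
Your argument is correct and is essentially identical to the paper's proof: both use Lemma \ref{NonNegativityLemma} to get that $c$ is non-decreasing, locate a nondegenerate interval of constancy, note that $c'_+=f\circ c+g$ vanishes there while $f\circ c$ is constant, and contradict the strict monotonicity of $g$. No issues.
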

\begin{pf}
Note that by Lemma~\ref{NonNegativityLemma}, the right-hand derivative
of $c$ is nonnegative, so that $c$ is nonnegative and nondecreasing.
By contradiction, if $c$ had an interval of constancy $[s,t]$, with
$t>s$, then
\begin{eqnarray*}
0 &=& {c'_+ \biggl( \frac{t+s}{2} \biggr)}
\\
&=& {f\circ c \biggl( \frac{t+s}{2} \biggr)}+{g \biggl( \frac
{t+s}{2}
\biggr)}
\\
&>& {f\circ c ( s )}+{g ( s )}
\\
&=& 0.\vspace*{-2pt}
\end{eqnarray*}
\upqed\end{pf}
\begin{remark*}
As we shall see in the proof of the uniqueness assertion of
Theorem~\ref{ExistenceTheorem}, if we can guarantee that all solutions
to ${\ivp( f,g )}$ are strictly increasing, then uniqueness holds for
${\ivp( f,g )}$. Note that if $f+{g ( 0 )}$ is strictly positive, then
${f ( x )}+{g ( t )}>0$ for all $x\geq0$ and $t\geq0$, so that all
solutions to ${\ivp( f,g )}$ are strictly increasing.\vspace*{-2pt}
\end{remark*}
\begin{pf*}{Proof of Theorem~\ref{ExistenceTheorem}, Uniqueness}
Let $c$ and $\tilde c$ solve ${\ivp( f,g )}$.\vspace*{1pt} To show that
$c=\tilde c$, we argue by contradiction by studying their inverses $i$
and~$\tilde i$.

Suppose that $c$ and $\tilde c$ are strictly increasing. Then $i$ and
$\tilde i$ are continuous. If $c\neq\tilde c $, then $ i\neq\tilde
i$, and we might\vspace*{1pt} without loss of generality suppose there is $x_1$ such
that ${i ( x_1 )}<{\tilde i ( x_1 )}$. Let
\[
x_0=\sup\bigl\{ x\leq x_1\dvtx{i ( x )}\geq{\tilde i ( x
)} \bigr\},
\]
and note that, by continuity of $ i$ and $\tilde i$, $x_0<x_1$ and $
i\leq\tilde i$ on $(x_0,x_1]$. Since $ i$ and $\tilde i$ are
continuous, they satisfy
\[
{i ( y )}=\int_0^y\frac{1}{{f ( x )}+{g\circ
i ( x )}} \,dx.
\]
There must exist $x\in[x_0,x_1]$ such that ${i' ( x )}$
and ${\tilde i' ( x )}$ both exist, and the former is
strictly smaller since
otherwise the inequality $\tilde i\leq i$ would hold on $[x_0,x_1]$.
For this value of $x$,
\[
{f ( x )}=\frac{1}{{\tilde i' ( x )}}-{g\circ\tilde i ( x )}<\frac
{1}{{i' ( x )}}-{g\circ
i ( x )}={f ( x )},
\]
which is a contradiction.

Note that all solutions to ${\ivp( f,g )}$ are strictly increasing
whenever $g$ is strictly increasing (by Lemma \ref
{strictlyIncreasingImmigrationLemma}) or $f$ is strictly positive,
which implies uniqueness to ${\ivp( f,g )}$ in these cases.\vadjust{\goodbreak}

When $g$ is constant, and $f+g$ is absorbed at $0$, meaning that if ${f
( s )}+{g ( 0 )}=0$, then ${f ( t )}+{g ( 0 )}=0$ for all $t\geq s$, we
can directly use the Lamperti transformation to obtain uniqueness.
Indeed, solutions to ${\ivp( f,g )}$ do not have spontaneous generation
and, as stated in the introduction to Section
\ref{ODESection} (cf. page \pageref{ODESection}), there is an unique
solution to ${\ivp( f+{g ( 0 )},0 )}$ without spontaneous generation.
\end{pf*}
%
\subsection{Uniqueness in the stochastic setting}
We now verify that solutions to (\ref{genLampRepDef}) are unique even
if the subordinator $Y$ is compound Poisson.
\begin{pf*}{Proof of Proposition~\ref{uniquenessForLevy}}
Let $X$ be a spLp and $Y$ an independent subordinator. We first prove
that there is an unique process $Z$ which satisfies
\[
Z_t=x+{X \biggl( \int_0^t
Z_s \,ds \biggr)}+Y_t.
\]

When $Y$ is an infinite activity subordinator (its L\'evy measure is
infinite or equivalently it has jumps in any nonempty open interval) or
it has positive drift, then its trajectories are strictly increasing,
and so uniqueness holds, thanks to Theorem~\ref{ExistenceTheorem}.

It then suffices to consider the case when $Y$ is a compound Poisson
process. There is a simple case we can establish: if $X$ is also a
subordinator, and $x>0$, then all solutions to ${\ivp(
X,x+Y )}$
are strictly increasing, and so uniqueness holds (again by Theorem \ref
{ExistenceTheorem}). It remains to consider two cases: when $X$ is a
subordinator and $x=0$ and when $X$ is not a subordinator. In the
first, note that zero solves ${\ivp( X,0 )}$, and since every
solution is nonnegative, zero is the smallest one. To prove
uniqueness, let $C^x$ be the (unique) solution to ${\ivp(
X,x )}$,
so that $C^x$ is greater than any solution to ${\ivp( X,0
)}$ by
Lemma~\ref{strictMonotonicityOfIVPWRTImmigration}. If we prove that as
$x\to0$, $C^x\to0$, then all solutions to ${\ivp( X,0
)}$ are
zero, and so uniqueness holds. For this, use the fact that as $t\to0$,
$X_t/t$ converges almost to the drift coefficient of $X$, say $d\in
[0,\infty)$ [cf. \citet{MR1406564}, Chapter III, Proposition 8, page
84] so that
\[
\int_{0+}\frac{1}{X_s} \,ds=\infty.
\]
Let $I^x$ be the (continuous) inverse of $C^x$ (note that $C^x$ is
strictly increasing). Since
\[
{I^x ( t )}=\int_0^t
\frac{1}{x+X_s} \,ds,
\]
we see, by Fatou's lemma, that $I^x\to\infty$ as $x\to0$, so that
$C^x\to0$. Now with $X$ still a subordinator and $Y$ compound Poisson,
the preceding case implies that the solution to ${\ivp(
X,Y )}$ is
unique until the first jump time of $Y$; after this jump time, all
solutions are strictly increasing, and hence uniqueness holds.\vadjust{\goodbreak}

The only remaining case is when $Y$ is compound Poisson and $X$ is not
a subordinator. The last hypothesis implies that $0$ is regular for
$(-\infty,0)$, meaning that on every interval $[0,\eps)$, $X$ visits
$(-\infty,0)$; cf. \citet{MR1406564}, Chapter VII, Theorem~1, page
189. From this, it follows that if $T$ is any stopping time with
respect to the filtration $\sigma( X_s,s\leq t )\vee
\sigma( Y ),t\geq0$,
then $X$ visits $(-\infty,X_{T})$ on any interval to the right of $T$.
Let $C$ be any solution to ${\ivp( X,x+Y )}$; we will
show that it
has no spontaneous generation. Since there is an unique solution
without spontaneous generation when $Y$ is piecewise constant (as
discussed in the introduction
to Section~\ref{ODESection}), we get
uniqueness. Let
\[
[T_{i-1},T_i),\qquad i=1,2,\ldots,
\]
be the intervals of constancy of $Y$; if $C$ has spontaneous generation
on one of these, say $[T_{i-1},T_i)$, then $X$ reaches the level
$-Y_{T_{i-1}}$ and then increases, which we know does not happen since
the hitting time of $ \{ -Y_{T_{i-1}} \}$ by the process $X$
is a
stopping time with respect to the filtration $\sigma( X_s,s\leq
t )\vee
\sigma( Y ), t\geq0$.

We end the proof by showing that any c\`adl\`ag process $Z$ satisfying
%
%
\begin{equation}
\label{DifferentialInequality} x+{X_- \biggl( \int_0^t
Z_s \,ds \biggr)}+Y_t\leq Z_t\leq x+{X \biggl(
\int_0^t Z_s \,ds
\biggr)}+Y_t
\end{equation}
actually satisfies
\[
Z_t= x+{X \biggl( \int_0^t
Z_s \,ds \biggr)}+Y_t.
\]
Let
\[
C_t=\int_0^t Z_s \,ds.
\]

When $Y$ is strictly increasing, an argument similar to the proof of
the Monotonicity lemma (Lemma \ref
{strictMonotonicityOfIVPWRTImmigration}) tells us that $C$ is strictly
increasing, so that $C$ actually satisfies ${\ivp( X,x+Y )}$.

When $Y=0$, the previous argument shows that, as long as $Z$ has not
reached $0$, $C$ coincides with the solution to ${\ivp(
X,x )}$. If
$Z$ is such that
\[
\inf\{ t\geq0\dvtx Z_t=0 \}=\inf\{ t\geq0\dvtx Z_{t-}=0 \},
\]
then $C$ solves ${\ivp( X,x )}$, which has an unique
solution, so
that (\ref{DifferentialInequality}) has an unique solution. We then
see that the only way in which $Z$ can cease to solve ${\ivp(
X,x )}$ is if $X$ is such that
\[
T_{0+}=\inf\{ t\geq0\dvtx x+ X_{t-}=0 \}<\inf\{ t\geq0\dvtx
x+X_t=0 \}=T_0,
\]
which is ruled out almost surely by quasi left-continuity of $X$.
Indeed, $T_{0+}$ is the increasing limit of the stopping times
\[
T_\eps=\inf\{ t\geq0\dvtx x+X_t<\eps\},
\]
which satisfy $T_{\eps}<T_{\eps'}$ if $\eps<\eps'$ since $X$ has no
negative jumps. Hence $X$ is almost surely continuous at $T_{0+}$ which
says that $x+X_{T_{0+}}=0$ almost surely. In the remaining case when
$Y$ is a (nonzero) compound Poisson process, we condition on $Y$ and
argue similarly on constancy intervals of $Y$.
\end{pf*}
%
\subsection{Explosion}

We now turn to the explosion criteria of solutions of ${\ivp(
f,g )}$ of Proposition~\ref{DeterministicExplosionProposition}.
\begin{pf*}{Proof of Proposition~\ref{DeterministicExplosionProposition}}
(1) If $\int^\infty1/{f^+ ( x )}=\infty$, let
$c$ be any
solution to ${\ivp( f,g )}$. We show that $c$ is finite
at every
$t>0$. Indeed, using the arguments of Lemma \ref
{strictMonotonicityOfIVPWRTImmigration}, we see that $c$ is bounded by
any solution to ${\ivp( f,1+{g ( t )} )}$ on
the interval
$[0,t]$. A particular solution to ${\ivp( f,1+{g (
t )} )}$ is
obtained by taking the right-continuous inverse of
\[
y\mapsto\int_0^y \frac{1}{{f ( x )}+1+{g ( t
)}} \,dx.
\]
Since
\[
\int_0^\infty\frac{1}{{f^+ ( x )}+1+{g ( t
)}} \,dx=\infty,
\]
the particular solution we have considered is everywhere finite.

(2) Let $c$ be a solution to ${\ivp( f,g )}$
where $f$ is
an explosive reproduction function, $\lim_{x\to\infty}{f (
x )}=\infty$ and ${g ( \infty)}$ exceeds the
maximum of $-f$. To
prove that $c$ explodes, choose $T>0$ such that ${f ( x
)}+{g ( t )}>0$ for all $x\geq0$ and $t\geq T$. Then
$f\circ c+g>0$ on
$[T,\infty)$. Let $M={c ( T )}$. We then consider the
right-continuous inverse $i$ of $c$ (which is actually an inverse on
$[M,\infty)$) and note that for $y>M$,
\[
{i ( y )}-{i ( M )}=\int_M^y
\frac{1}{{f
( x )}+{g\circ i ( x )}} \,dx\leq\int_M^y
\frac
{1}{{f ( x )}} \,dx.
\]
Hence, ${i ( y )}$ converges to a finite limit as $y\to
\infty$ so
that $c$ explodes.
%
\end{pf*}
%
\subsection{Application of the analytic theory}
We now pass to a probabilistic application of Theorem~\ref{ExistenceTheorem}.
\begin{pf*}{Proof of Corollary~\ref{LambertProblemCorollary}}
We consider first the case where $Y$ is deterministic. Since $Y$ is
assumed to be strictly increasing, we can consider the unique
nonnegative stochastic process $Z$ which satisfies
\[
Z_t=x+X_{\int_0^t Z_s \,ds}+Y_t.
\]
(The reader can be reassured by Lemma~\ref{MeasurabilityDetailsLemma}
regarding any qualms on measurability issues.) Since $Z$ is
nonnegative, Theorems 4.1 and 4.2 of \citet{MR1158024} imply the
existence of a stochastic process $\tilde X$ with the same law as $X$
such that
\[
Z_t=x+\int_0^t
Z_s^{1/\alpha} \,d\tilde X_s+Y_t.
\]
Hence $Z$ is a weak solution to (\ref{SDEStablespLp}). 

Conversely, if $Z$ is a solution to (\ref{SDEStablespLp}), we apply
Theorems 4.1 and 4.2 of \citet{MR1158024} to deduce the existence
of a
stochastic process $\tilde X$ with the same law as $X$ such that
\[
Z_t=x+\tilde X_{\int_0^t Z_s \,ds}+Y_t.
\]
Considering the mapping $ ( f,g )\mapsto{F (
f,g )}$ that
associates to every admissible breadth-first pair the solution $h$ to
(\ref{genLampTrans}), we see that $Z$ has the law of ${F (
\tilde X,x+Y )}$. Hence, weak uniqueness holds for (\ref
{SDEStablespLp}).

When $Y$ is not deterministic but independent of $X$, we just reduce to
the previous case by conditioning on $Y$ [or by augmenting the
filtration with the $\sigma$-field $\sigma( Y_t\dvtx t\geq0 )$].
\end{pf*}

\section{CBI processes as Lamperti transforms}
\label{CBIRepSection}

We now move on to the analysis of Theorem~\ref{CBIRepThm}. Let $X$ and
$Y$ be independent L\'evy processes such that $X$ is spectrally
positive and $Y$ is a subordinator under the probability measure~$\p$.
Call $\Psi$ and $\Phi$ their Laplace exponents (taking care to have
$\Phi\geq0$ as for subordinators). Note that the trajectories of $Y$
are either zero, piecewise constant (in the compound Poisson case), or
strictly increasing.

Let $Z$ be the stochastic process that solves
\[
Z_t=x+X_{\int_0^t Z_s \,ds}+Y_t
\]
and has no spontaneous generation (when $Y$ is compound Poisson). To
prove that $Z$ is a ${\cbi( \Psi,\Phi)}$, we should
see that it
is a c\`adl\`ag and homogeneous Markov process and that there exist
functions ${u_t\dvtx(0,\infty)\to(0,\infty)}$ and ${v_t\dvtx
(0,\infty)\to(0,\infty)}$, satisfying
%
%
\begin{equation}
\label{DifferentialEquationsForCBIExponents}
\cases{ \dfrac{\partial
}{\partial t}{u_t
( \lambda)}=-{\Psi\circ u_t ( \lambda)},
\vspace*{2pt}\cr
{u_0 (
\lambda)}=\lambda,} \quad\mbox{and}\quad
\cases{ \dfrac{\partial}{\partial
t}{v_t (
\lambda)}={\Phi\bigl( {u_t ( \lambda)} \bigr)},
\vspace*{2pt}\cr
{v_0
( \lambda)}=0,}
\end{equation}
and such that for all $\lambda,t\geq0$,
\[
{\se\bigl( e^{-\lambda Z_t} \bigr)}=e^{-x{u_t ( \lambda
)}-{v_t ( \lambda)}.}
\]
[At this point it should be clear that the equation for $u$
characterizes it and that, actually, for fixed $\lambda>0$, $t\mapsto
{u_t ( \lambda)}$ is the inverse function to
\[
x\mapsto\int_x^\lambda\frac{1}{{\Psi( y )}} \,dy.\mbox{$\bigg]$}
\]
%

\subsection{A characterization lemma
and a short proof of Lamperti's theorem}
The way to compute the Laplace transform of $Z$ is by showing, with
martingale arguments to be discussed promptly,
that
%
%
\begin{equation}
\label{ExpectationForSolToIVP} {\se\bigl( e^{-\lambda Z_t} \bigr)}=\int
_0^t{\se\bigl( \bigl[ {\Psi( \lambda
)}Z_s-{\Phi( \lambda)} \bigr]e^{-\lambda Z_s} \bigr)} \,ds.
\end{equation}
We are then in a position to apply the following result.
%
%
\begin{lem}[(Characterization lemma)]
\label{CharacterizationLemma}
If $Z$ is a nonnegative homogeneous Markov process with c\`adl\`ag
paths starting at $x$ and satisfying (\ref{ExpectationForSolToIVP})
for all $\lambda>0$, then $Z$ is a ${\operatorname{CBI} ( \Psi,\Phi)}$ that
starts at $x$.
\end{lem}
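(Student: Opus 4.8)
The plan is to show that the Laplace functional identity \eqref{ExpectationForSolToIVP} forces $\imf{\se}{e^{-\lambda Z_t}}$ to have the exponential-affine form $e^{-x\imf{u_t}{\lambda}-\imf{v_t}{\lambda}}$ with $u,v$ solving \eqref{DifferentialEquationsForCBIExponents}, and then to invoke the classical fact (due to \cite{MR0290475}) that this property characterizes the $\imf{\cbi}{\Psi,\Phi}$ law. The starting point is to fix $\lambda>0$ and $t>0$ and, using the homogeneous Markov property, to write $\imf{\se_x}{e^{-\lambda Z_{t}}}=F_t(\lambda,x)$; applying \eqref{ExpectationForSolToIVP} at time $s$ started from an arbitrary state and conditioning, one gets a Chapman--Kolmogorov-type integral relation. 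The cleanest route: define $m_t(\lambda)=\imf{\se_x}{e^{-\lambda Z_t}}$ and, more generally, for the process started at an arbitrary point, differentiate the identity \eqref{ExpectationForSolToIVP} (which has an integrand that is right-continuous in $s$ because $Z$ is \cadlag) to get the right-hand derivative
\begin{esn}
D_+ m_t(\lambda) = \imf{\se_x}{\bra{\imf{\Psi}{\lambda}Z_t - \imf{\Phi}{\lambda}}e^{-\lambda Z_t}} = -\imf{\Psi}{\lambda}\frac{\partial}{\partial\lambda}m_t(\lambda) - \imf{\Phi}{\lambda}m_t(\lambda).
\end{esn}
This is a first-order linear PDE for $m_t$ as a function of $(t,\lambda)$, of exactly the transport type whose characteristics are governed by $\dot\lambda = \imf{\Psi}{\lambda}$, i.e. by the ODE for $u_t$.

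The key steps, in order, are: (i) record that $t\mapsto m_t(\lambda)$ is absolutely continuous with the stated right derivative, using right-continuity of $s\mapsto \se_x[(\Psi(\lambda)Z_s-\Phi(\lambda))e^{-\lambda Z_s}]$, which follows from \cadlag\ paths plus dominated convergence (the integrand is bounded on bounded intervals since $xe^{-\lambda x}$ and $e^{-\lambda x}$ are bounded); (ii) solve the transport equation by the method of characteristics, which amounts to checking that $t\mapsto m_{t}(u_{T-t}(\lambda))\exp(v_{T-t}(\lambda))$ is constant on $[0,T]$ — differentiate in $t$ and use both the PDE for $m$ and the ODEs \eqref{DifferentialEquationsForCBIExponents}, recalling that $t\mapsto u_t(\lambda)$ is the inverse of $x\mapsto\int_x^\lambda \imf{\Psi}{y}^{-1}\,dy$ so $\frac{\partial}{\partial t}u_t(\lambda)=-\Psi(u_t(\lambda))$; (iii) evaluate at $t=0$ and $t=T$ to get $m_T(\lambda)=m_0(u_T(\lambda))e^{-v_T(\lambda)}=e^{-xu_T(\lambda)-v_T(\lambda)}$, using $m_0(\mu)=e^{-x\mu}$ since $Z_0=x$; (iv) conclude via \cite{MR0290475} that a \cadlag\ homogeneous Markov process whose one-dimensional Laplace transforms have this form is a $\imf{\cbi}{\Psi,\Phi}$ started at $x$. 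Existence and finiteness of $u_t,v_t$ for all $t$ (when $\Psi$ may have a zero rendering $u$ eventually $0$, or be explosive, and similarly $v$ possibly infinite) needs a remark, but these are exactly the standard CBI exponents, so I would simply cite their basic properties.

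The main obstacle I anticipate is step (i)–(ii) executed with full rigour in $\lambda$: one needs $m_t(\lambda)$ to be differentiable in $\lambda$ (so that $\frac{\partial}{\partial\lambda}m_t$ appears), which is immediate for Laplace transforms of nonnegative random variables for $\lambda>0$ by differentiation under the expectation, but then one must justify that the map $(t,\lambda)\mapsto m_t(\lambda)$ is jointly regular enough for the characteristic computation — in particular that the composition $t\mapsto m_t(u_{T-t}(\lambda))$ is absolutely continuous and that the chain rule applies with the one-sided derivatives. A clean way around subtleties is to avoid PDEs entirely: fix $T$, set $\phi(t)=m_t(u_{T-t}(\lambda))e^{v_{T-t}(\lambda)}$, plug the \emph{integrated} form \eqref{ExpectationForSolToIVP} into $\phi$, and show directly that $\phi$ has zero right-derivative using only the single-variable absolute continuity of $m_\cdot(\mu)$ for each fixed $\mu$ together with the smoothness of $u,v$ in their time variable; the $\lambda$-derivative of $m$ then only enters through the relation $\se_x[\Psi(\mu)Z_t e^{-\mu Z_t}] = -\Psi(\mu)\partial_\mu m_t(\mu)$, which is unambiguous for $\mu>0$. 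The remaining bookkeeping — that $Z$ is genuinely a homogeneous \cadlag\ Markov process, needed to even state the Lemma's hypotheses and to get \eqref{ExpectationForSolToIVP} from its pointwise-in-time version — is assumed as input to the Lemma, so within the Lemma itself there is nothing further to prove once the characteristic identity is established.
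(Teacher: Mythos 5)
Your proposal is correct and follows essentially the same route as the paper: the paper likewise shows that $s\mapsto\esp{e^{-\imf{u_{t-s}}{\lambda}Z_s-\imf{v_{t-s}}{\lambda}}}$ has vanishing derivative on $[0,t]$ by combining the integrated identity \eqref{ExpectationForSolToIVP} with the ODEs \eqref{DifferentialEquationsForCBIExponents}, which is exactly your ``clean way around subtleties'' of checking directly that $\phi$ has zero right-derivative. The only slip is the sign in your definition $\phi(t)=m_t(u_{T-t}(\lambda))e^{v_{T-t}(\lambda)}$, which should carry $e^{-v_{T-t}(\lambda)}$ to be constant, consistent with the conclusion you state in step (iii).
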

\begin{remark*}
Note that the hypotheses on the process $Z$ of Lemma \ref
{CharacterizationLemma} do not allow us to use generator arguments
which would shorten the proof, for example, by using the
characterization of the infinitesimal generator of a CBI process
through exponential functions.
%
\end{remark*}
\begin{pf*}{Proof of Lemma~\ref{CharacterizationLemma}}
Let us prove that the function
\[
{G ( s )}=\se\bigl( e^{-{u_{t-s} ( \lambda
)} Z_s-{v_{t-s} ( \lambda)}} \bigr)
\]
satisfies ${G' ( s )}=0$ for $s\in[0,t]$, so that it is
constant on
$[0,t]$, implying the equality
\[
\se\bigl( e^{-\lambda Z_t} \bigr)={G ( t )}={G ( 0 )}=e^{-x{u_t (
\lambda)}-{v_t ( \lambda
)}}.
\]
We then see that $Z_t$ has the same one-dimensional distributions as a
${\cbi( \Psi,\Phi)}$ that starts at $x$, so that by
the Markov
property, $Z$ is actually a ${\cbi( \Psi,\Phi)}$.

To see that $G'=0$, we first write
%
%
\begin{eqnarray}
\label{DecompositionOfExpectationInCharacterizationLemma} {G ( s+h
)}-{G ( s )}
&=& \bigl( {G ( s+h )}-\se\bigl( e^{-{u_{t-s-h} (
\lambda)}Z_s-{v_{t-s-h} ( \lambda)}} \bigr) \bigr)
\nonumber\\[-8pt]\\[-8pt]
&&{} + \bigl( \se\bigl( e^{-{u_{t-s-h} ( \lambda
)}Z_s-{v_{t-s-h} ( \lambda)}} \bigr)-{G ( s )} \bigr).
\nonumber
\end{eqnarray}
We now analyze both summands to later divide by $h$ and let $h\to0$.

For the first summand, use (\ref{ExpectationForSolToIVP}) to get
\begin{eqnarray*}
&&{G ( s+h )}-\se\bigl( e^{-Z_s {u_{t-s-h} ( \lambda
)}-{v_{t-s-h} ( \lambda)}} \bigr)
\\
&&\qquad=e^{-{v_{t-s-h} ( \lambda)}} \int_s^{s+h} \se\bigl(
e^{-Z_{r}{u_{t-s-h} ( \lambda)}} \bigl[ Z_r {\Psi\circ u_{t-s-h} (
\lambda
)}-{\Phi\circ u_{t-s-h} ( \lambda)} \bigr] \bigr) \,dr,
\end{eqnarray*}
so that, since $Z$ has c\`adl\`ag paths, we get
\begin{eqnarray*}
&&\lim_{h\to0}\frac{1}{h} \bigl[ {G ( s+h )}-\se\bigl(
e^{-Z_s
{u_{t-s-h} ( \lambda)}-{v_{t-s-h} ( \lambda
)}} \bigr) \bigr]
\\
&&\qquad=\se\bigl( e^{-{u_{t-s} ( \lambda)}Z_{s}-{v_{t-s}
( \lambda)}} \bigl[ Z_s{\Psi\circ u_{t-s}
( \lambda)}-{\Phi\circ u_{t-s} ( \lambda)} \bigr] \bigr).
\end{eqnarray*}

For the second summand in the right-hand side of (\ref
{DecompositionOfExpectationInCharacterizationLemma}), we differentiate
under the expectation to obtain
\begin{eqnarray*}
&&\lim_{h\to0}\frac{1}{h}\se\bigl( e^{-{u_{t-s-h} ( \lambda
)}
Z_s-{v_{t-s-h} ( \lambda)}}-e^{-{u_{t-s} ( \lambda
)} Z_s-{v_{t-s} ( \lambda)}}
\bigr)
\\
&&\qquad=\se\biggl( e^{-{u_{t-s} ( \lambda)} Z_s-{v_{t-s}
( \lambda)}} \biggl[ Z_s\frac{\partial{u_{t-s} (
\lambda)}}{\partial s} +
\frac{\partial{v_{t-s} (
\lambda)}}{\partial
s} \biggr] \biggr).
\end{eqnarray*}
We conclude that ${G' ( s )}=0$ for all $s\in[0,t]$, using
(\ref
{DifferentialEquationsForCBIExponents}).
\end{pf*}

A simple case of our proof of Theorem~\ref{CBIRepThm} arises when
$Y=0$. Recall from Proposition~\ref{JumpingConditionProposition} the notation
\[
C_t=\int_0^t Z_s \,ds.
\]
\begin{pf*}{Proof of Theorem~\ref{CBIRepThm} when $\Phi=0$}
This is exactly the setting of Lamperti's theorem stated by \citet
{1967LampertiCSBP}.

When $\Phi=0$ (or equivalently, $Y$ is zero), then $C_t$ is a stopping
time for $X$ [since the inverse of $C$ can be obtained by integrating
$1/(x+X)$]. Since $Z$ is the time-change of $X$ using the inverse of an
additive functional, $Z$ is a homogeneous Markov process. [Another
proof of the Markov property of~$Z$, based on properties of ${\ivp
( X,x+Y )}$ is given in~\ref{HomogeneousMarkovProperty}
of Lemma \ref
{MeasurabilityDetailsLemma}.]
Also, we can transform the martingale
\[
e^{-\lambda X_t}-{\Psi( \lambda)}\int_0^te^{-\lambda
X_s}
\,ds
\]
by optional sampling into the martingale
\[
e^{-\lambda Z_t}-{\Psi( \lambda)}\int_0^te^{-\lambda
Z_s}Z_s
\,ds.
\]
We then take expectations and apply Lemma~\ref{CharacterizationLemma}.
\end{pf*}
%
\subsection{The general case}
For all other cases, we need the following measurability details.
Consider the mapping $F_t$ which takes a c\`adl\`ag function $f$ with
nonnegative jumps and starting at zero, a c\`adl\`ag $g$ starting at
zero (either piecewise constant or strictly increasing), and a
nonnegative real $x$ to ${c'_{+} ( t )}$ where $c$ solves
${\ivp( f,x+g )}$ and has no spontaneous generation (if
$g$ is piecewise
constant). [Note that these conditions uniquely specify a solution to
${\ivp( f,x+g )}$.]
Then
%
%
\begin{equation}
\label{EvolutionRuleForZ} Z_{t+s}={F_t (
X_{C_s+\cdot}-X_{C_s},Y_{s+\cdot}-Y_s,Z_s
)}.
\end{equation}
The mapping $F_t$ is measurable. Indeed, we can view it as the
composition of three measurable mappings. The first one is the mapping
that associates to $ ( f,g+x )$ the unique solution to
${\ivp( f,g )}$ (without spontaneous generation), from
the space of admissible
breadth-first pairs equipped with the $\sigma$-fields generated by the
projections $ ( f,g )\mapsto{f ( t )}$ and
$ (
f,g )\mapsto
{g ( t )}$ for any $t\geq0$ to the space of nondecreasing
continuous functions with c\`adl\`ag derivative (equipped also with the
$\sigma$-field generated by projections). This mapping is measurable
when $g=0$ by measurability of the Lamperti transformation. Next, when
$g$ is piecewise constant this follows by concatenation of Lamperti
transforms as in the introduction to Section
\ref{ODESection}, and for
strictly increasing $g$, this follows since the unique solution to
${\ivp( f,g )}$ is the limit of solutions to ${\ivp
( f,g_n )}$
with piecewise constant functions $g_n$, as seen in the proof of
Theorem~\ref{ExistenceTheorem}. The second mapping sends a continuous
function with c\`adl\`ag derivative to its derivative, which is
measurable by approximation of the derivative by a sequence of
differential quotients. The third mapping is simply the projection of a
c\`adl\`ag function to its value at time $t$; its measurability is
proved in Theorem 12.5, page 134 of \citet{MR1700749}.

We suppose that our probability space $ ( \Omega,\F,\p)$
is complete, and let
$\mathscr{T}$ stand for the sets in $\F$ of probability zero. For
fixed $y,t\in
[0,\infty]$, let $\G^t_y=\F^X_y\vee\F^Y_t\vee\mathscr{T}$.
%
%
\begin{lem}[(Measurability details)]
\label{MeasurabilityDetailsLemma}
\textup{(1)} The filtration $ ( \G^t_y,y\geq0 )$ satisfies the
usual hypotheses.\vspace*{-6pt}
{
\renewcommand\thelonglist{(\arabic{longlist})}
\renewcommand\labellonglist{\thelonglist}
\begin{longlist}\setcounter{longlist}{1}
\item $C_t$ is a stopping time for the filtration $ ( \G^t_y,y\geq
0 )$, and we can therefore define the $\sigma$-field
\[
\G^t_{C_t}= \bigl\{ A\in\F\dvtx A\cap\{ C_t\leq y
\}\in\G^t_y \bigr\}.
\]
\item\label{HomogeneousMarkovProperty} $Z$ is a homogeneous
Markov process with respect to the filtration $ ( \G^t_{C_t}, y\geq0 )$.
\end{longlist}}
\end{lem}
\begin{pf}
(1) We just need to be careful to avoid \textit{one of the worst
traps involving $\sigma$-fields} by using independence; cf. \citet
{2003ChaumontYor}, Example 25, page~29.

(2) We are reduced to verifying
%
%
\begin{equation}
\label{WeirdStoppingTimeProperty} \{ C_t< y \}\in
\G^t_y.
\end{equation}
We prove (\ref{WeirdStoppingTimeProperty}) in two steps, first when
$Y$ is piecewise constant, then when $Y$ is strictly increasing.

Let $Y$ be piecewise constant, jumping at the stopping times
$T_1<T_2<\cdots\,$, and set $T_0=0$. We first prove that
%
%
\begin{equation}
\label{WeirdStoppingTimePropertyAtStoppingTime} \{ C_{T_n}<y \}\cap\{
T_n\leq t \}\in\G_y^t
\end{equation}
and this result and a similar argument will yield (\ref
{WeirdStoppingTimeProperty}). The membership in (\ref
{WeirdStoppingTimePropertyAtStoppingTime}) is proved by induction using
the fact that $C$ can be written down as a Lamperti transform on each
interval of constancy of $Y$. Let $I_t$ be the functional on the
subspace of Skorohod space consisting of functions with nonnegative
jumps that aids in defining the Lamperti transformation: when applied
to a given function $f$, we first define
\[
{T_0 ( f )}=\inf\bigl\{ t\geq0\dvtx{f ( t )}=0 \bigr\}
\]
and then
\[
{I_t ( f )}=\int_0^{t\wedge{T_0 ( f )}}
\frac
{1}{{f ( s )}} \,ds.
\]
We then have
\[
\{ C_{T_1}<y \}\cap\{ T_1\leq t \}= \bigl\{
{I_y ( X+Y_0 )}>T_1\wedge t \bigr\}\cap\{
T_1\leq t \} \in\G_y^t.
\]
If we suppose that
\[
\{ C_{T_n}<y \}\cap\{ T_n\leq t \}\in
\G_y^t,
\]
%
then the decomposition
\begin{eqnarray*}
&& \{ C_{T_{n+1}}<y \}\cap\{ T_{n+1}\leq t \}
\\
&&\qquad=\bigcup_{q\in(0,y)\cap\ra}\bigcup_{m=1}^\infty
\bigcup_{k=0}^{2^{-m}\lfloor2^m ( y-q )\rfloor} \biggl\{
\frac{k}{2^m}\leq C_{T_n}< \frac{k+1}{2^m} \biggr\}\cap\{
T_{n+1}\leq t \}
\\
&&\hspace*{114pt}\qquad\quad{}\cap\bigl\{ {I_{q} ( x+X_{({{k}/{2^m}})+\cdot
}+Y_{T_n}
)}>T_{n+1}-T_n \bigr\}
\nonumber
\end{eqnarray*}
allows us to obtain (\ref{WeirdStoppingTimePropertyAtStoppingTime}).
Then the decomposition
\begin{eqnarray*}
\{ C_t<y \} &=&\bigcup_{n=0}^\infty
\bigcup_{q\in(0,y)\cap\ra}\bigcup_{m=1}^\infty
\bigcup_{k=0}^{2^{-m}\lfloor2^m ( y-q
)\rfloor} \{ T_n\leq
t<T_{n+1} \}\\
&&\qquad\quad\hspace*{97.1pt}{}\cap\biggl\{ \frac{k}{2^m}\leq C_{T_n}<
\frac{k+1}{2^m} \biggr\}
\\
&&\qquad\quad\hspace*{97.1pt}{}\cap\bigl\{ {I_q ( x+X_{({k}/{2^m})+\cdot
}+Y_{T_n}
)}>t-T_n \bigr\}
\end{eqnarray*}
gives (\ref{WeirdStoppingTimeProperty}) when $Y$ is piecewise constant.

When $Y$ is strictly increasing, consider a sequence $\eps_n$
decreasing strictly to zero and a decreasing sequence $ ( \pi_n )$
of partitions of $[0,t]$ whose norms tend to zero, with
\[
\pi_n= \bigl\{ t_0^n=0<t_1^n<
\cdots<t_{k_n}^n=t \bigr\}.
\]
Consider the process $Y^n= ( Y^n_s )_{s\in[0,t]}$ defined by
\[
Y^n_s=\eps_n+\sum
_{i=1}^{k_n}Y_{t_i^n}{\si_{[t_{i-1}^n,t_i^n)} ( s
)}+Y_t\si_{s=t}.
\]
Since $\pi^{n}$ is contained in $\pi^{n+1}$ and $\eps_n>\eps_{n+1}$,
$Y^n>Y^{n+1}$. If $C^n$ is the solution to ${\ivp(
X,x+Y^n )}$ with no spontaneous\vadjust{\goodbreak} generation (defined only on $[0,t]$),
then Lemma~\ref{strictMonotonicityOfIVPWRTImmigration} gives
$C^{n}>C^{n+1}$. Hence, $ ( C^n )$ converges as $n\to\infty
$, and
since the limit is easily seen to be a solution to ${\ivp(
X,x+Y )}$, the limit must equal $C$ by the uniqueness statement in
Theorem~\ref{ExistenceTheorem}. To obtain (\ref
{WeirdStoppingTimeProperty}), we note that
\[
\bigl\{ C^n_t<y \bigr\}\in\F^X_y
\vee\F^{Y^n}_t\subset\F^X_y\vee
\F^{Y}_t
\]
and
\[
\{ C_t<y \}=\bigcup_{n} \bigl\{
C^n_t<y \bigr\}.
\]

(3) Mimicking the proof of the Strong Markov Property for
Brownian motion [as in \citet{MR1876169}, Theorem 13.11] and using
(\ref{WeirdStoppingTimeProperty}), one proves that the process
\[
( X_{C_t+s}-X_{C_t},Y_{t+s}-Y_t
)_{s\geq0}
\]
has the same law as $ ( X,Y )$ and is independent of $\G^t_{C_t}$,
which we can restate as
\[
\begin{tabular}{p{320pt}}
$ ( X_{C_t+s}-X_{C_t},Y_{t+s}-Y_t )_{s\geq0}$ has the same
law as
$ ( X,Y )$ and\vspace*{2pt} is independent of $ ( X^{C_t},Y^t
)$ where
$X^{C_t}_s=X_{C_t\wedge s}$ and $Y^t_s=Y_{t\wedge s}$.
\end{tabular}
\]
Equation (\ref{EvolutionRuleForZ}) implies that the conditional law of
$Z_{t+s}$ given $\G^s_{C_s}$ is actually $Z_s$ measurable, implying
the Markov property. The transition semigroup is homogeneous and in $t$
units of time is given by the law ${P_t ( x,\cdot)}$ of
${F_t ( X,Y,x )}$ under $\p$. Note that this semigroup is
conservative on
$[0,\infty]$.
\end{pf}

We will need Proposition~\ref{JumpingConditionProposition} for our
proof of Theorem~\ref{CBIRepThm}.
\begin{pf*}{Proof of 
Proposition~\ref{JumpingConditionProposition}}
Consider the filtration $\G= ( \G_t )$ given by
\[
\G_t=\sigma( X_s\dvtx s\leq t )\vee\sigma(
Y_s\dvtx s\geq0 )\vee\mathscr{T}.
\]

If $Y$ is strictly increasing, then $C$ is strictly increasing and
continuous. For fixed $\eps>0$, let $T_1<T_2<\cdots$ be the jumps of
$Y$ of magnitude greater than $\eps$. Arguing as in Lemma \ref
{MeasurabilityDetailsLemma}, we see that $C_{T_i}$ is a $\G$-stopping
time which is the almost sure limit of the $\G$-stopping times
$C_{ ( T_i-1/n )^+}$ as $n\to\infty$. Since $X$ is a $\G
$-L\'evy
process and
$C_{ ( T_i-1/n )^+}<C_{T_i}$ for all $n$, quasi
left-continuity of
$X$ implies that $X\circ C$ does not jump at $T_i$ almost surely. Since
this is true for any $\eps>0$, then $X\circ C$ and $Y$ do not jump at
the same time.

If $Y$ is compound Poisson, we argue on its constancy intervals,
denoted $[T_{i-1},T_i)$, $i=1,2,\ldots\,$. On the set
$\{C_{s}<C_{T_i}$ for all $s<T_i\}$, we can argue as above, using
quasi left-continuity. On the set $\{ C_s=C_{T_i}$
for some $s<T_i\}$, we note that $X$ reaches $-Y_{T_i}$ for the first
time at
$C_{T_i}$. The hitting time of $-Y_{T_i}$ by $X$ is a $\G$-stopping
time which is the almost sure limit of the hitting times of
$-Y_{T_i}+1/n$ as $n\to\infty$. The latter are strictly smaller than
the former since $X$ has no negative jumps. Hence, by quasi
left-continuity, $X$ is almost surely continuous at $C_{T_i}$.\vadjust{\goodbreak}
\end{pf*}
\begin{pf*}{Proof of Theorem~\ref{CBIRepThm}}
Since
\[
\biggl( e^{-\lambda X_y}-{\Psi( \lambda)}\int_0^ye^{-\lambda
X_s}
\,ds \biggr)_{t\geq0}
\]
is a $ ( \G^t_y )_{y\geq0}$-martingale, it follows that
$M= ( M_t )_{t\geq0}$, given by
\[
M_t=e^{-\lambda X_{C_t}}-{\Psi( \lambda)}\int_0^t
e^{-\lambda
X_{C_s}}Z_s \,ds,
\]
is a $ ( \G^t_{C_t} )_{t\geq0}$-local martingale. With
respect to
the latter filtration, the stochastic process $N= ( N_t
)_{t\geq
0}$ given by
\[
N_t=e^{-\lambda Y_t}+{\Phi( \lambda)}\int_0^t
e^{-\lambda Y_s} \,ds
\]
is a martingale. Hence $e^{-\lambda X\circ C}$ and $e^{-\lambda(
x+Y )}$ are semimartingales to which
we may apply integration by parts
to get
\begin{eqnarray*}
e^{-\lambda Z_t} 
&=&\mbox{local martingale}+\int_0^t
e^{-\lambda Z_s} \bigl[ {\Psi( \lambda)}Z_s-{\Phi( \lambda)}
\bigr] \,ds\\
&&{}+ \bigl[ e^{-\lambda
X\circ C},e^{-\lambda x-\lambda Y} \bigr]_t,
\end{eqnarray*}
where the local martingale part is
\[
t\mapsto\int_0^{t} e^{-\lambda( x+Y_s )}
\,dM_s+\int_0^t e^{-\lambda X\circ C_s}
\,dN_s.
\]
Since $X\circ C$ and $Y$ do not jump at the same time by Proposition
\ref{JumpingConditionProposition} and $Y$ is of finite variation, we
see that
\[
\bigl[ e^{-\lambda X\circ C},e^{-\lambda x-\lambda Y} \bigr]=0;
\]
cf. \citet{MR1876169}, Theorem 26.6(vii).

We deduce that
\[
e^{-\lambda Z_t}-\int_0^t e^{-\lambda Z_s}
\bigl[ {\Psi( \lambda)}Z_s-{\Phi( \lambda)} \bigr] \,ds
\]
is a martingale, since it is a local martingale whose sample paths are
uniformly bounded on compacts thanks to the nonnegativity of $Z$.
Taking expectations, we get (\ref{ExpectationForSolToIVP}), and we
conclude by applying Lemma~\ref{CharacterizationLemma} since $Z$ is a
Markov process thanks to Lemma~\ref{MeasurabilityDetailsLemma}.
\end{pf*}

\subsection{Translating a law of the iterated logarithm}

\mbox{}

\begin{pf*}{Proof of Corollary~\ref{LIL}}
Let $X$ be a spLp with Laplace exponent $\Psi$, $\tilde\Phi$ be the
right-continuous inverse of $\Psi$ and
\[
{\alpha( t )}=\frac{{\log}\llvert{\log t}\rrvert}{{\tilde
\Phi( {t^{-1}\log}\llvert{\log t}\rrvert)}}.\vadjust{\goodbreak}
\]
Recall that $\tilde\Phi$ is the Laplace exponent of the subordinator
$T= ( T_t,t\geq0 )$ where
\[
T_t=\inf\{ s\geq0\dvtx X_s\leq-t \};
\]
cf. \citet{MR1406564}, Chapter VII, Theorem 1. If $\tilde d$ is the
drift coefficient of $\tilde\Phi$, then Proposition 1 of
Bertoin [(\citeyear{MR1406564}), Chapter III] gives
\[
\lim_{\lambda\to\infty}\frac{{\tilde\Phi( \lambda
)}}{\lambda}=\tilde d.
\]
Hence,
\[
\mbox{as $t\to0+$}\qquad \cases{ {\alpha( t )}\sim t/\tilde d, &\quad if
$\tilde d>0$,
\vspace*{1pt}\cr
t={o \bigl( {\alpha( t )} \bigr)}, &\quad if $\tilde d=0$.}
\]
We now assert that if $a_t\to1$ as $t\to0$, then
\[
\lim_{t\to0}\frac{{\alpha( a_t t )}}{{\alpha(
t )}}=1.
\]
This is clear when $\tilde d>0$, so suppose that $\tilde d=0$. Since
$t\mapsto{\log}\llvert{\log t}\rrvert$ is slowly varying at zero, it
suffices to
show that if $b_\lambda\to1$ as $\lambda\to\infty$, then
%
%
\begin{equation}
\label{LimitForLIL} \lim_{\lambda\to\infty}\frac{{\tilde\Phi(
b_\lambda
\lambda)}}{{\tilde\Phi( \lambda)}}=1.
\end{equation}
However, concavity of $\tilde\Phi$, increasingness and nonnegativity
give (if $b>1$)
\[
{\tilde\Phi( b\lambda)}/b\leq{\tilde\Phi( \lambda)}\leq{\tilde\Phi
( b
\lambda)},
\]
which implies (\ref{LimitForLIL}).

As noted by \citet{MR1312150}, \citet{MR0292163} prove the
existence of
a constant $\zeta\neq0$
such that
\[
\liminf_{t\to0}\frac{X_t}{{\alpha( t )}}=\zeta.
\]
%

Let $Z$ be the unique solution to
\[
Z_t=x+X_{\int_0^t Z_s \,ds}+Y_t
\]
with $x>0$, where $X$ and $Y$ are independent L\'evy processes, with
$X$ spectrally positive of Laplace exponent $\Psi$ and $Y$ a
subordinator with Laplace exponent $\Phi$. Since $Z_0=x$, and $Z$ is
right-continuous, then
\[
\lim_{t\to0+}\frac{1}{t}\int_0^t
Z_s \,ds=x
\]
almost surely. Hence
\[
\lim_{t\to0+}\frac{{\alpha( \int_0^t Z_s \,ds
)}}{{\alpha( xt )}}=1
\]
and so
\[
\liminf_{t\to0+} \frac{X_{\int_0^t Z_s \,ds}}{{\alpha(
xt )}}=\zeta. 
\]
On the other hand, if $d$ is the drift of $\Phi$, then
\[
\lim_{t\to0}\frac{Y_t}{t}=d
\]
[cf. \citet{MR1406564}, Chapter III, Proposition 8] so that if
$\tilde
d=0$, $Y_t={o ( {\alpha( t )} )}$ and
\[
\liminf_{t\to0+}\frac{Z_t-x}{{\alpha( xt )}}=\zeta.
\]
If $\tilde d>0$, then by Proposition 8 of
Bertoin [(\citeyear{MR1406564}), Chapter III], we actually have
\[
\liminf_{t\to0}\frac{X_t}{{\alpha( t )}}=-1
\]
so that
\[
\liminf_{t\to0+}\frac{Z_t-x}{{\alpha( xt )}}=-1+\frac
{d\tilde d}{x}.
\]
\upqed\end{pf*}

\subsection{Explosion criteria for CBI}
As a probabilistic application of the deterministic explosion criteria
of Proposition~\ref{DeterministicExplosionProposition}, we prove
Corollary~\ref{CBIExplosionCorollary}.
\begin{pf*}{Proof of Corollary~\ref{CBIExplosionCorollary}}
Let $x>0$, and consider a spectrally positive L\'evy process $X$ with
Laplace exponent $\Psi$ independent of a subordinator $Y$ with Laplace
exponent $\Phi$. Let $Z$ be the unique solution to
\[
Z_t=x+X_{\int_0^t Z_s \,ds}+Y_t,
\]
which is a ${\cbi( \Psi,\Phi)}$ that starts at $x$.
Also, let
\[
C_t=\int_0^t Z_s \,ds.
\]

\begin{longlist}[(3)]
\item[(1)] Let $Y$ be a nonzero subordinator. Path by path, we see
that $Z$ jumps to infinity if and only if either $X$ jumps to infinity
or $Y$ does. However, the probability that either $X$ or $Y$ jumps to
infinity is positive if and only if either ${\Psi( 0
)}>0$ or ${\Phi( 0 )}>0$. When $Y$ is zero, $Z$ jumps to
infinity if $X$ jumps to
infinity and never reaches $-x$, which has positive probability.

\item[(2)] The Ogura--Grey explosion criterion for continuous state
branching processes (as stated just before Corollary \ref
{CBIExplosionCorollary}) can be restated as follows: a ${\cbi(
\Psi,0 )}$ started at $x$ reaches $\infty$ continuously at a finite
time with positive probability if and only if ${\Psi( 0
)}=0$, and
$\Psi$ is an explosive branching mechanism. It is also simple to see
that a ${\cbi( \Psi,0 )}$ jumps to $\infty$ at a finite
time with
positive probability if and only if ${\Psi( 0 )}>0$.\vadjust{\goodbreak}

Path by path,\vspace*{1pt} we see that if $Z$ reaches $\infty$ continuously (say at
time $\tau$), then $Y$ does not jump to infinity on $[0,\tau)$. Also,
if we let $\tilde C$ be the unique\vspace*{1pt} solution to ${\ivp(
x+Y_{\tau-}+\eps+X,0 )}$ and $\tilde Z$ as the right-hand
derivative of $\tilde
C$, where $\eps>0$, then $C<\tilde C$ on $(0,\tau)$ (as follows from
the argument proving Lem\-ma~\ref
{strictMonotonicityOfIVPWRTImmigration}). Hence $\tilde C$ explodes on
$[0,\tau)$. We conclude that the branching mechanism of $\tilde Z$ is
explosive by the Ogura--Grey explosion criterion. Hence, the assumption
$\sip( Z\mbox{ reaches $\infty$ continuously} )>0$
implies that
${\Psi( 0 )}=0$ and that $\Psi$ is an explosive
branching mechanism.

On the other hand, if ${\Psi( 0 )}=0$ and $\Psi$ is
explosive, let
$\tilde\Phi=\Phi-{\Phi( 0 )}$, $Y$ be a subordinator
independent
of $X$ with Laplace exponent $\tilde\Phi$, so that sending $Y$ to
infinity at an exponential random variable with parameter ${\Phi
( 0 )}$ (independent of both $X$ and $Y$) leaves us with a subordinator
with Laplace exponent $\Phi$ independent of $X$. Let $C^1$ be a
solution to ${\ivp( x/2+X,0 )}$ and $C^2$ be a solution
to ${\ivp( x+X,Y )}$ so that $C^1\leq C^2$, and hence
$C^2$ explodes if
$C^1$ does. Let $Z^i$ be the right-hand derivative of $C^i$. $Z^1$ is a
${\cbi( \Psi,0 )}$ starting at $x/2$ while $Z^2$ is a
$\cbi(\Psi,\tilde\Phi)$ started at $x$; notice that the process $Z$
obtained by
sending $Z^2$ to infinity at the same exponential as $Y$ leaves us with
a ${\cbi( \Psi,\Phi)}$. By assumption, $X$ cannot jump to
infinity and $Z^1$ explodes with positive probability. Hence, $Z^2$
explodes with positive probability and can only do so continuously. Hence,
\begin{eqnarray*}
&&\sip( Z\mbox{ reaches $\infty$ continuously} )\\
&&\quad\geq e^{-t{\Phi( 0
)}}\sip\bigl(
Z^2\mbox{ reaches $\infty$ continuously by time $t$} \bigr)
\end{eqnarray*}
and the right-hand side is positive for $t$ large enough.

\item[(3)] We also deduce that
\[
\sip( Z\mbox{ reaches $\infty$ continuously} )=1
\]
if and only if ${\Phi( 0 )}=0$ and $\sip( Z^2\mbox
{ reaches
$\infty$ continuously} )=1$. A necessary and sufficient
condition for
the latter is that, additionally, $\Phi$ is not zero. Indeed, when
$\Phi$ is not zero, then $Y_t\to\infty$ as $t\to\infty$. Since
$\Psi$ is explosive and ${\Phi( 0 )}=0$, then $\lim_{t\to\infty
}X_t=\infty$ and so Proposition \ref
{DeterministicExplosionProposition} implies that the solution to
${\ivp( X,x+Y )}$ explodes. If $\Phi=0$, then $Z^2$ is a
${\operatorname{CBI} ( \Psi,0 )}$, which cannot explode continuously
almost surely since
the probability that $Z^2$ is absorbed at zero is the probability that
$X$ goes below $-x$, which is positive.\qed
\end{longlist}
%
\noqed\end{pf*}

\section{Stability of the generalized Lamperti transformation}
\label{StabilitySection}
We now turn to the proof of Theorems~\ref{StabilityTheorem} and \ref
{ExtensionOfPitmansTheorem}, and of
Corollaries~\ref{ContinuityOfCBILawsCorollary} and
\ref{LimitTheoremGWI}, which summarize the stability theory for
${\ivp( f,g )}$.

\subsection{Proof of the analytic assertions}
\label{ProofOfAnalyticAssertionsSubsection}
In order to compare the initial value problem ${\ivp( f,g
)}$ with
the functional inequality (\ref{IVPWithInequalities}), we now
construct an example of an admissible breadth-first pair $ (
f,g )$
such that ${\ivp( f,g )}$\vadjust{\goodbreak} has an unique solution, but
(\ref
{IVPWithInequalities}) has at least two. Indeed, consider $g=0$, and take
\[
{f ( x )}= \cases{ \sqrt{1-x}, &\quad if $x<1$,
\cr
1, &\quad if $x\geq1$.}
\]
Then ${\ivp( f,g )}$ has a unique solution, by Theorem
\ref
{ExistenceTheorem}, since $f$ is strictly positive. The solution is the
function $c$ given by
\[
{c ( t )}= \cases{ t-t^2/4, &\quad if $t\leq2$,
\cr
c(2)+t-2, &\quad if $t
\geq2$.}
\]
Since $c$ is strictly increasing, it also solves (\ref
{IVPWithInequalities}). However, the function
\[
{\tilde c ( t )}= \cases{ {c ( t )}, &\quad if $t\leq2$,
\cr
c(2), &\quad if $t\geq2$,}
\]
is also a solution to (\ref{IVPWithInequalities}). Hence, the
assumption of Theorem~\ref{StabilityTheorem} is stronger than just
uniqueness of ${\ivp( f,g )}$ although related (as seen by
comparing Theorem~\ref{ExistenceTheorem} and Proposition \ref
{UniquenessForIVPWithInequalitiesProposition}).

%

We start with a proof of Proposition \ref
{UniquenessForIVPWithInequalitiesProposition}.
\begin{pf*}{Proof of Proposition \ref
{UniquenessForIVPWithInequalitiesProposition}}
Let $c$ be any nondecreasing solution to
\[
\int_s^t {f_-\circ c ( r )}+{g ( r )} \,dr\leq
{c ( t )}-{c ( s )}\leq\int_s^t {f\circ c ( r
)}+{g ( r )} \,dr\qquad \mbox{for $s\leq t$}
\]
such that ${c ( 0 )}=0$. This automatically implies
continuity of $c$
and so $f\circ c+g$ is c\`adl\`ag and does not jump downwards.

Note that $c$ is strictly increasing if $f_-+{g ( 0 )}$ is strictly
positive or $g$ is strictly increasing, we have equalities in (\ref
{IVPWithInequalities}), implying that $c$ solves ${\ivp(
f,g )}$
which has a unique solution with these hypotheses. Indeed, if $f_-+{g
( 0 )}$ is a positive function, then the lower bound
integrand is
strictly positive, and so $c$ cannot have a constancy interval. If on
the other hand $g$ is strictly increasing, note first that the
nondecreasing character of $c$ implies, through (\ref
{IVPWithInequalities}), that $f\circ c+g$ is nonnegative (first almost
everywhere, but then everywhere since it is c\`adl\`ag). Also, $f\circ
c+g$ can only reach zero continuously since it lacks negative jumps. If
$ c$ had a constancy interval $[s,t]$ with $s<t$,
there would exist $r\in(s,t)$ such that
\[
{f_-\circ c ( s )}+{g ( s )}={f_-\circ c ( s )}+{g ( r )}=0,
\]
which implies that $g$ has a constancy interval on $[0,t]$, a
contradiction. Hence, $c$~has no constancy intervals.

When $g$ is a constant and $f_-+g$ is absorbed at zero, then also $f+g$
is absorbed at zero and at the same time. Hence, $c$ is strictly
increasing until it is absorbed, so that again both bounds for the
increments of $c$ are equal. Then $c$ solves ${\ivp( f,g
)}$ which
has a unique solution under this hypothesis.
\end{pf*}

We now continue with a proof of Theorem~\ref{StabilityTheorem}. It is
divided in two parts: convergence of the cumulative population which is
then used to prove convergence of population profiles. The strategy is
simple: we first use the functional equations satisfied by $ (
c_n )$ to prove that $c_n\wedge K$ is uniformly bounded and
equicontinuous. Then, we pass to the limit in the functional equations
satisfied by $c_n$ to see that any subsequential limit of $c_n\wedge K$
equals $c\wedge K$. [This is where the assumption that (\ref
{IVPWithInequalities}) has an unique solution comes into play.] Having
established convergence of $c_n$ to $c$, we then verify some technical
hypotheses enabling us to apply results on continuity of composition
and addition on adequate subspaces of Skorohod space and deduce that
$f_n\circ c_n+g_n$ converges to $f\circ c+g$.
\begin{pf*}{Proof of Theorem~\ref{StabilityTheorem}, convergence of cumulative
populations}
Let $K,\eps>0$ and consider the sequence $c_n\wedge K$ consisting of
nondecreasing functions with c\`adl\`ag right-hand derivatives. Since
\begin{eqnarray*}
0&\leq&{D_+ c_n\wedge K ( t )} \\
&=& \si_{{c_n ( t )}\leq K}\times\cases{
\displaystyle \bigl[{f_n\circ c_n \bigl( \lfloor t/\sigma_n
\rfloor\sigma_n \bigr)}+{g_n ( t )}\bigr]^+, &\quad if $
\sigma_n>0$,
\vspace*{2pt}\cr
\displaystyle {f_n\circ c_n ( t
)}+{g_n ( t )}, &\quad if $\sigma_n=0$,}
\\
& \leq&\sup_{y\leq K} {f ( y )}+{g ( t )}+\eps
\end{eqnarray*}
for $n$ large enough (by the convergence of $f_n\to f$ on $[0,K]$ with
the $J_1$ topology and $g_n\to g$ on $[0,t]$ with the $J_1$ topology),
we see that the sequence $c_n\wedge K$ is uniformly bounded and
equicontinuous on compacts. To prove convergence of \mbox{$c_n\wedge K$}
(uniformly on compact sets), it is enough to prove by Arzel\`a--Ascoli
that any subsequential limit is the same. Let $t>0$ and $c_{n_k}\wedge
K$ be a uniformly convergent subsequence on $[0,t]$. Denote by $\tilde
c$ its uniform limit, which is then nondecreasing. If $s\in[0,t]$ is
such that ${\tilde c ( s )}<x$, then ${c_{n_k} (
s )}<x$ for $k$
large enough.
Since $f$ has no negative jumps, then
\[
\liminf_{x\to y}{f ( x )}={f_- ( y )} \quad\mbox{and}\quad
\limsup_{x\to y}{f
( x )}={f ( y )}
\]
so that
\[
f_-\circ\tilde c\leq\liminf f_{n_k}\circ c_{n_k}\leq\limsup
f_{n_k}\circ c_{n_k}\leq f\circ\tilde c.
\]
By Fatou's lemma, for any $s_1\leq s_2\leq s$,
\[
\int_{s_1}^{s_2} \bigl[ {f_-\circ\tilde c ( r )}+{g
( r )} \bigr]^+ \,dr\leq{\tilde c ( s_2 )}-{\tilde c ( s_1
)}\leq\int_{s_1}^{s_2} \bigl[ {f\circ\tilde c ( r
)}+{g ( r )} \bigr]^+ \,dr.
\]
As $\tilde c$ is nondecreasing, we might remove the positive parts in
the above display and conclude, from uniqueness to (\ref
{IVPWithInequalities}), that $\tilde c=c$ on $[0,s]$. If, on the other
hand, ${\tilde c ( s )}=K$, then ${c_{n_k}\wedge K (
s )}\to K$
which implies that $c_{n_k}\wedge K\to c\wedge K$ uniformly on compact sets.

Let $\tau$ be the explosion time of $c$. If $t<\tau$, then ${c
( t )}<\infty$, and so [choosing $K>{c ( t )}$ in
the paragraph
above] we see that $c_n\to c$ uniformly on\vadjust{\goodbreak} $[0,t]$. If $t\geq\tau$,
then ${c ( t )}=\infty$, and so ${c_n ( t
)}\wedge K\to K$ for any
$K>0$. Hence ${c_n ( t )}\to\infty={c ( t )}$.
\end{pf*}
\begin{pf*}{Proof of Theorem~\ref{StabilityTheorem}, convergence of population
profiles}
Let
\[
h_n=D_+ c_n \quad\mbox{and}\quad h=D_+ c.
\]
We now prove that $h_n\to h$ in the Skorohod $J_1$ topology if the
explosion time $\tau$ is infinite. Recall that $h=f\circ c+g$ and that
\[
h_n=\cases{ f_n\circ c_n+g_n, &\quad
if $\sigma_n=0$,
\vspace*{2pt}\cr
\displaystyle \bigl[ {f_n\circ c_n
\bigl( \lfloor t/\sigma_n\rfloor\sigma_n
\bigr)}+{g_n \bigl( \lfloor t/\sigma_n\rfloor
\sigma_n \bigr)} \bigr]^+, &\quad if $\sigma_n>0$.}
\]

Assume that $\sigma_n=0$ for all $n$, the case $\sigma_n>0$ being
analogous. Then the assertion $h_n\to h$ is reduced to proving that
$f_n\circ c_n\to f\circ c$, which is related to the continuity of the
composition mapping on (adequate subspaces~of) Skorohod space, and then
deducing that $f_n\circ c_n+g_n\to f\circ c+g$, which is related to
continuity of addition on (adequate subspaces of) Skorohod space. Both
continuity assertions require conditions to hold: the convergence
$f_n\circ c_n\to f\circ c$ can be deduced from Wu
[(\citeyear{MR2479479}), Theorem 1.2] if we prove that $f$ is continuous at every point at which
$c^{-1}$ is discontinuous, and then the convergence of $f_n\circ
c_n+g_n$ will hold because of Whitt
[(\citeyear{1980Whitt}), Theorem~4.1]
since we
assumed that $f\circ c$ and $g$ do not jump at the same time. Hence,
the convergence $h_n\to h$ is reduced to proving that $f$ is continuous
at discontinuities of $c^{-1}$.

If $c$ is strictly increasing [which happens when $g$ is strictly
increasing or $f+{g ( 0 )}>0$], then $c^{-1}$ is
continuous. (This is
the most important case in the stochastic setting, since otherwise
immigration is compound Poisson, therefore piecewise constant, and one
might argue by pasting together Lamperti transforms.)


Suppose that $c$ is not strictly increasing, and let $x$ be a
discontinuity of~$c^{-1}$. Let
\[
s={c^{-1} ( x- )}<{c^{-1} ( x )}=t,
\]
so that $c=x$ on $[s,t]$ while $c<x$ on $[0,s)$ and $c>x$ on $(t,\infty
)$. Since $D_+c=f\circ c+g=0$ on $[s,t)$, we see that $g$ is constant
on $[s,t)$. We assert that
\[
\inf\bigl\{ y\geq0\dvtx{f ( y )}=-{g ( s )} \bigr\}=x.
\]
Indeed, if $f$ reached $-{g ( s )}$ at $x'<x$, there would exist
$s'<s$ such that
\[
0={f\circ c \bigl( s' \bigr)}+{g ( s )}\geq{f\circ c \bigl(
s' \bigr)}+{g \bigl( s' \bigr)}\geq0,
\]
so that actually $g$ is constant on $[s',t)$. Hence, $c$ has
spontaneous generation which implies there are at least two solutions
to ${\ivp( f,g )}$: one that is constant on $(s',t]$, and
$c$. This
contradicts the assumed uniqueness to (\ref{IVPWithInequalities}).
Since $f$ has no negative jumps and reaches the level $-{g (
s )}$ at
time $x$, then $f$ is continuous at $x$.\vadjust{\goodbreak}

Finally, we assume that the explosion time $\tau$ is finite but that
${f_n ( x )},{f ( x )}\to\infty$ as $x\to
\infty$ uniformly in
$n$ and prove that $h_n\to h$ in the uniform $J_1$ topology. Let $\eps
>0$, $d$ be a bounded metric on $[0,\infty]$ that makes it
homeomorphic to $[0,1]$, and consider $M>0$ such that ${d (
x,y )}<\eps$ if $x,y\geq M$. Let $K>0$ be such that ${f (
x )},{f_n ( x )}>M$ if $x>K$ and $n$ is large enough.
Let $T<\tau$
be such that $f$ is continuous at ${c ( T )}$ and $K<{c
( T )}$.
Then $f_n\to f$ in the usual $J_1$ topology on $[0,{c ( T
)}]$ and,
arguing as in the nonexplosive case, we see that
\[
h_n=f_n\circ c_n+g_n\to f\circ
c+g=h
\]
in the usual $J_1$ topology on $[0,T]$. Hence, there exists a sequence
$ ( \lambda_n )$ of increasing homeomorphisms of $[0,T]$ into
itself such that $h_n-h\circ\tilde\lambda_n\to0$ uniformly on
$[0,T]$. Define now $\lambda_n$ to equal $\tilde\lambda_n$ on
$[0,T]$ and the identity on $[T,\infty)$. Then $ ( \lambda_n )$
is a sequence of homeomorphisms of $[0,\infty)$ into itself which
converges uniformly to the identity, and since $K<{c ( T
)}$, then
$K<{c_n ( T )}$ eventually and so $M<h_n,h$ eventually
thanks to the
choice of $K$, so that ${d ( {h_n ( t )},{h (
t )} )}<\eps$ on
$[T,\infty)$ eventually. Hence, $h_n\to h$ in the uniform $J_1$ topology.
\end{pf*}

In order to apply Theorem~\ref{StabilityTheorem} to
Galton--Watson-type processes, we need a lemma relating the
discretization of the Lamperti transformation and scaling. Define the
scaling operators $S_{a}^b$ by
\[
{S_{a}^bf ( t )}=\frac{1}{b}{f ( at )}.
\]
Let also $c^\sigma$ be the approximation of span $\sigma$ to ${\ivp
( f,g )}$, which is the unique function satisfying
\[
{c^\sigma( t )}=\int_0^t \bigl[ {f
\circ c^\sigma\bigl( \sigma\lfloor s/\sigma\rfloor\bigr)}+{g \bigl(
\sigma\lfloor s/\sigma\rfloor\bigr)} \bigr]^+ \,ds.
\]
We shall denote ${c^\sigma( f,g )}$ to make the
dependence on $f$
and $g$ explicit in the following lemma and denote by ${h^\sigma
( f,g )}$ the right-hand derivative of ${c^\sigma(
f,g )}$.

%
\begin{lem}
\label{ScalingAndDiscretizationLemma}
We have
\[
{S_a^b c^\sigma( f,g )}={c^{\sigma/a}
\bigl( S_b^{b/a}f, S_a^{b/a}g \bigr)}
\quad\mbox{and}\quad {S_{a}^{b/a} h^\sigma( f,g
)}={h^{\sigma/a} \bigl( S_b^{b/a}f, S_a^{b/a}g
\bigr)}.
\]
\end{lem}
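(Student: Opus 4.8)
The plan is to derive the first identity from the uniqueness property that characterizes the span-$\sigma$ approximation $\imf{c^\sigma}{f,g}$, and then to obtain the second one by differentiating the first from the right. Throughout I take the scaling parameters $a,b$ to be strictly positive, so that $S_b^{b/a}f$ still has no negative jumps, $S_a^{b/a}g$ is still non-decreasing and $\paren{S_b^{b/a}f,\,S_a^{b/a}g}$ is again an admissible breadth-first pair, while $\sigma/a\geq 0$, so that the objects appearing on the right-hand sides of the asserted identities are well defined (and, for $\sigma>0$, uniquely determined). For brevity write $\tilde f=S_b^{b/a}f$, $\tilde g=S_a^{b/a}g$ and $\tilde\sigma=\sigma/a$.

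First I would set $\imf{C}{t}=\imf{S_a^b c^\sigma}{f,g}(t)=\tfrac{1}{b}\,\imf{c^\sigma}{f,g}(at)$ and check that $C$ satisfies the equation defining $\imf{c^{\tilde\sigma}}{\tilde f,\tilde g}$. The one point that really matters is that a floor at span $\tilde\sigma$ in the rescaled time variable matches a floor at span $\sigma$ in the original one: with $u=\tilde\sigma\floor{s/\tilde\sigma}$ one has $au=a\cdot\tfrac{\sigma}{a}\floor{as/\sigma}=\sigma\floor{as/\sigma}$. Combining this with $\imf{\tilde f}{x}=\tfrac{a}{b}\imf{f}{bx}$, $\imf{\tilde g}{x}=\tfrac{a}{b}\imf{g}{ax}$ and $\imf{C}{u}=\tfrac{1}{b}\,\imf{c^\sigma}{f,g}(au)$ gives
\begin{esn}
\imf{\tilde f}{\imf{C}{u}}+\imf{\tilde g}{u}=\frac{a}{b}\paren{\imf{f}{\imf{c^\sigma}{f,g}(\sigma\floor{as/\sigma})}+\imf{g}{\sigma\floor{as/\sigma}}};
\end{esn}
since $a/b>0$ the positive part commutes with multiplication by $a/b$, so the substitution $r=as$ yields
\begin{esn}
\int_0^t\bra{\imf{\tilde f}{\imf{C}{\tilde\sigma\floor{s/\tilde\sigma}}}+\imf{\tilde g}{\tilde\sigma\floor{s/\tilde\sigma}}}^+\,ds=\frac{1}{b}\int_0^{at}\bra{\imf{f}{\imf{c^\sigma}{f,g}(\sigma\floor{r/\sigma})}+\imf{g}{\sigma\floor{r/\sigma}}}^+\,dr=\frac{1}{b}\,\imf{c^\sigma}{f,g}(at)=\imf{C}{t}.
\end{esn}
Hence $C$ solves $\imf{\ivp_{\tilde\sigma}}{\tilde f,\tilde g}$; when $\sigma>0$ this problem has a unique solution, built by a finite recursion over the intervals $[i\tilde\sigma,(i+1)\tilde\sigma)$, so that $C=\imf{c^{\tilde\sigma}}{\tilde f,\tilde g}$, which is the first identity. (When $\sigma=0$ the very same computation shows, without invoking uniqueness, that $S_a^b$ carries any solution of $\imf{\ivp}{f,g}$ to a solution of $\imf{\ivp}{\tilde f,\tilde g}$.)

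For the second identity I would differentiate the first one from the right. A one-line computation of right-hand derivatives gives $D_+\paren{S_a^b\phi}=S_a^{b/a}\paren{D_+\phi}$ for every $\phi$ possessing a right-hand derivative; applying this with $\phi=\imf{c^\sigma}{f,g}$ to the left-hand side of the first identity and using $D_+\imf{c^{\tilde\sigma}}{\tilde f,\tilde g}=\imf{h^{\tilde\sigma}}{\tilde f,\tilde g}$ on the right-hand side gives $\imf{S_a^{b/a}h^\sigma}{f,g}=\imf{h^{\sigma/a}}{\tilde f,\tilde g}$, as claimed. I do not expect any genuine obstacle in this lemma; the only points calling for care are keeping track of the three distinct scaling operators $S_a^b$, $S_b^{b/a}$ and $S_a^{b/a}$, and the elementary identity $a\,\tilde\sigma\floor{s/\tilde\sigma}=\sigma\floor{as/\sigma}$, which is exactly the reason the span has to be divided by $a$ when time is multiplied by $a$.
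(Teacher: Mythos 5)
Your proof is correct and is exactly the "elementary change of variables" that the paper invokes without writing out: the substitution $r=as$ together with the floor identity $a\,\tilde\sigma\floor{s/\tilde\sigma}=\sigma\floor{as/\sigma}$ and the uniqueness of the span-$\tilde\sigma$ recursion, followed by taking right-hand derivatives. No gaps.
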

The proof is an elementary change of variables.
%

\subsection{Weak continuity of CBI laws}

\mbox{}

\begin{pf*}{Proof of Corollary~\ref{ContinuityOfCBILawsCorollary}}
Let $X_n$ and $X$ be spLps with Laplace exponents $\Psi_n$ and $\Psi$
and $Y_n$ and $Y$ be subordinators with Laplace exponents $\Phi_n$ and
$\Phi$ such that $X_n$ (resp., $X$) is independent of $Y_n$ (resp.,~$Y$).

The hypotheses $\Psi_n\to\Psi$ and $\Phi_n \to\Phi$ imply that
$ ( X_n,Y_n )$ converges weakly to $ ( X,Y )$ in
the Skorohod
$J_1$ topology. By Skorohod's representation theorem, we can assume
that the convergence takes place almost surely on an adequate
probability space.\vadjust{\goodbreak}

Let $Z_n$ (resp., $Z$) be the Lamperti transform of $ (
X_n,x_n+Y_n )$ [resp., $ ( X,x+Y )$]. When $X$ is nonexplosive,
Propositions~\ref{uniquenessForLevy} and \ref
{JumpingConditionProposition} and Theorem~\ref{StabilityTheorem} then
imply that $Z_n$ converges almost surely to $Z$, which is a ${\cbi
( \Psi,\Phi)}$, thanks to Theorem~\ref{CBIRepThm}.

When $X$ is explosive, let $\rho$ be a distance on $[0,\infty]$ which
makes it homeomorphic to $[0,1]$ and, for any $\eps>0$, choose $M_\eps
$ such that ${\rho( x,y )}<\eps$ if $x,y\geq M_\eps$.
Recall that
$d_\infty$ stands for the uniform $J_1$ topology. Since the $X^n\to X$
and $Y^n\to Y$ in the usual Skorohod topology as $n\to\infty$ almost
surely, then reasoning as in the proof of uniform $J_1$ convergence of
Theorem~\ref{StabilityTheorem}, we see that, for any $\eps>0$,
\[
\sip\bigl( {d_\infty\bigl( Z^n,Z \bigr)}>\eps,
X^n_s,X_s>M_\eps\mbox{ for all
}s\geq t \bigr)\to0 \qquad\mbox{as $n\to\infty$.}
\]
However, choosing $t$ and $M$ big enough, we can make
\[
\sip\bigl( X^n_s\leq M\mbox{ for some }s\geq t \bigr)
\]
arbitrarily small for all $n$ large enough, so that ${d_\infty(
Z^n,Z )}\to0$ in probability, which is enough to guarantee that
$Z^n\to Z$ weakly in the uniform $J_1$ topology. Indeed, since $X$ is
explosive, we have that ${\Psi' ( 0+ )}=-\infty$ [cf.
\citet{MR2466449}, proof of Theorem 2.2.3.2, page 95] which means that $X$
drifts to $\infty$; cf. \citet{MR1406564}, Chapter VII, Corollary
2.ii. Since the latter result implies that the negative of the infimum
of $X$ has an exponential distribution of parameter $\eta$, where
\[
\eta=\inf\bigl\{ \lambda>0\dvtx{\Psi( \lambda)}=0 \bigr\},
\]
we see that
\begin{eqnarray*}
&&
\sip( X_s\leq M\mbox{ for some }s\geq t ) \\
&&\qquad\leq \sip(
X_t\leq2M )+\sip( X_t>2M\mbox{ and }X_s\leq
M\mbox{ for some }s\geq t )
\\
&&\qquad\leq \sip( X_t\leq2M )+e^{-\eta M}.
\end{eqnarray*}
Since $X$ drifts to infinity, the term $\sip( X_t\leq2M
)$ goes to
zero as $t\to\infty$. Asymptotically, the same bounds hold for $X^n$
since $\Psi^n\to\Psi$ and hence, by convexity of~$\Psi$,
\[
\lim_{n\to\infty} \bigl( \inf\bigl\{ \lambda>0\dvtx{\Psi^n ( \lambda
)}=0 \bigr\} \bigr)=\inf\bigl\{ \lambda>0\dvtx{\Psi( \lambda)}=0 \bigr\}
=\eta.
\]
\upqed\end{pf*}
%
\subsection{A limit theorem for Galton--Watson processes with immigration}

\mbox{}

\begin{pf*}{Proof of Corollary~\ref{LimitTheoremGWI}}
By Skorohod's theorem, if $X$ and $Y$ are L\'evy processes whose
distributions at time $1$ are $\mu$ and $\nu$, then
\[
S^{n}_{a_n}X^n\to X \quad\mbox{and}\quad
S_{b_n}^{n}Y^n\to Y,
\]
where the convergence is in the $J_1$ topology. Assume first that $X$
is nonexplosive.\vadjust{\goodbreak}

We can apply Lemma~\ref{ScalingAndDiscretizationLemma} to get either
\[
S_{b_{\lfloor k_n/x\rfloor}}^{k_n/x}Z^n={h^{1/b_{\lfloor k_n/x\rfloor
}} \bigl(
S^{k_n/x}_{{k_n}b_{\lfloor k_n/x\rfloor
}/{x}}X^n,x+S^{k_n/x}_{b_{\lfloor k_n/x\rfloor}}Y^n
\bigr)}
\]
or
\[
S_{{x}a_{\lfloor{k_n/x}\rfloor}/{k_n}}^{k_n/x}Z^n={h^{
{k_n}/({xa_{\lfloor{k_n/x}\rfloor}})} \bigl(
x+S^{k_n/x}_{a_{\lfloor
k_n/x\rfloor}}X^n,S^{k_n/x}_{{x}a_{\lfloor{k_n/x}\rfloor
}/{k_n}}Y^n
\bigr)}.
\]

Let $Z$ be the unique process satisfying
\[
Z_t=x+X_{c\int_0^t Z_s \,ds}+Y_t
\]
as in Proposition~\ref{uniquenessForLevy}. If $\frac
{k_n}{x}b_{\lfloor k_n/x\rfloor}/a_{\lfloor k_n/x\rfloor}\to c\in
[0,\infty)$, we see that
\[
S_{b_{\lfloor k_n/x\rfloor}}^{k_n/x}Z^n\to Z,
\]
thanks to Propositions~\ref{uniquenessForLevy} and \ref
{JumpingConditionProposition} and Theorems~\ref{CBIRepThm} and \ref
{StabilityTheorem}.

When $\frac{k_n}{x}b_{\lfloor k_n/x\rfloor}/a_{\lfloor k_n/x\rfloor
}\to\infty$,
let $Z$ instead be the unique solution to
\[
Z_t=x+X_{\int_0^t Z_s \,ds}.
\]
Then
\[
S_{{x}a_{\lfloor{k_n/x}\rfloor}/{k_n}}^{k_n/x}Z^n\to Z.
\]

When $X$ is explosive, the arguments in the proof of Corollary \ref
{ContinuityOfCBILawsCorollary} show that, in order to obtain the stated
convergence in the uniform $J_1$ topology, it is enough to prove that
for all $M>0$,
\[
\lim_{M\to\infty}\lim_{t\to\infty}\limsup_n \sip\biggl(
\frac
{1}{n}X^n_{\lfloor s a_n\rfloor}\leq M \mbox{ for some }s\geq t
\biggr)=0.
\]
Since $X$ drifts to infinity if it is explosive, $\Psi$ has an unique
positive root which we denote $\eta$.

Let
\[
{G_n ( \lambda)}=\se\bigl( e^{-\lambda X^n_1} \bigr).
\]
Recall that since the increments of $X^n$ are bounded below by $-1$,
minus the random variable
\[
I_n=\min_{m\geq0} X^n_m
\]
has a geometric distribution with parameter $e^{-\eta_n}$ where $\eta
_n$ is the greatest nonnegative real number at which $G_n$ achieves
the value $1$; cf. Asmussen [(\citeyear{MR1978607}), Part B, Chapter VIII,
Section 5, Corollary 5.5, page 235] or the forthcoming Lemma \ref
{ConvergenceOfGeneratingFunctionsLemma}. By log-convexity of $G_n$,
$\eta_n=\inf\{ \lambda>0\dvtx{G_n ( \lambda
)}>1 \}$. If
we assume
the convergence of $n\eta_n$ to $\eta$ as $n\to\infty$, we see that
\[
\limsup_n\sip\biggl( -\frac{1}{n}I_n\geq M
\biggr)=e^{-\eta M}.
\]
We now use the Markov property to conclude that if the distribution of
$X_1$ is continuous at $M$, then
\[
\limsup_n \sip\biggl( \frac{1}{n}X^n_{\lfloor s a_n\rfloor}
\leq M \mbox{ for some }s\geq t \biggr)\leq\sip( X_t\leq2M )+\sip(
X_t\geq2M )e^{-\eta M}.
\]

To conclude, we should prove that $n\eta_n\to\eta$. This, however,
is implied by the following convergence of Laplace transforms:
\[
\se\bigl( e^{-\lambda/n X^n_{a_n}} \bigr) \to\se\bigl(
e^{-\lambda X_1} \bigr)= e^{{\Psi( \lambda
)}}.
\]
Indeed, recall that $\se( e^{-\lambda X_1} )<1$ exactly
on $(0,\eta)$
and that $\se( e^{-\lambda/ nX^n_{a_n}} )<1$ exactly on
$(0,n\eta_n)$. If we consider $\lambda<\eta$ then $\se( e^{-\lambda/
nX^n_{a_n}} )<1$ for large enough $n$, so that $\lambda\leq
n\eta_n$
for large enough $n$. This implies $\eta\leq\liminf_{n}n\eta_n$;
the upper bound is proved similarly. Convergence of Laplace transforms
is actually the condition imposed by \citet{MR2225068} to prove limit
theorems for Galton--Watson processes with immigration. That this
already follows from our hypotheses is the content of the following
lemma, which concludes the proof of Corollary~\ref{LimitTheoremGWI}.
\end{pf*}
%
%
\begin{lem}
\label{ConvergenceOfGeneratingFunctionsLemma}
Let $X^n$ be a sequence of random walks with jumps in $ \{
-1,0,\break1,\ldots\}$ satisfying the conditions of Corollary \ref
{LimitTheoremGWI}, and suppose that $X$ is not a subordinator. Then
\[
\se\bigl( e^{-\lambda/n X^n_{a_n}} \bigr)\to e^{{\Psi(
\lambda)}}
\]
for all $\lambda>0$.
\end{lem}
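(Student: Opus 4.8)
The plan is to reduce the assertion to the classical theory of convergence of row sums of triangular arrays to infinitely divisible laws, the only non-standard ingredient being the a priori bound $\xi^n\geq -1$ on the individual steps $\xi^n$ of $X^n$ (with law $\mu_n$ on $\set{-1,0,1,\dots}$), which is precisely what keeps the exponential $x\mapsto e^{-\lambda x}$ under control. First, since $X^n_{a_n}=X^n_{\floor{a_n}}$ is a sum of $\floor{a_n}$ independent copies of $\xi^n$, one has the exact identity $\esp{e^{-\lambda X^n_{a_n}/n}}=G_n(\lambda/n)^{\floor{a_n}}$ with $G_n(s)=\esp{e^{-s\xi^n}}$, and this is finite because $e^{-sk}\leq e^{s}$ for $k\geq -1$ forces $G_n(s)\leq e^{s}$. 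So it suffices to prove $\floor{a_n}\paren{G_n(\lambda/n)-1}\to\imf{\Psi}{\lambda}$; then, since $\floor{a_n}\to\infty$ forces $G_n(\lambda/n)\to 1$, one has $\floor{a_n}\log G_n(\lambda/n)\to\imf{\Psi}{\lambda}$ and hence $G_n(\lambda/n)^{\floor{a_n}}\to e^{\imf{\Psi}{\lambda}}$.

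To prove this I would unpack the hypothesis $X^n_{a_n}/n\Rightarrow\mu$ through the Gnedenko--Kolmogorov criterion for the array with summands $\xi^n/n$ and row length $\floor{a_n}\to\infty$. This yields the vague convergence $\floor{a_n}\,\proba{\xi^n/n\in dx}\to\imf{\nu}{dx}$ on $\re\setminus\set{0}$, with limiting L\'evy measure $\nu$ carried by $(0,\infty)$ --- consistently with spectral positivity, since any value of $\xi^n/n$ bounded away from $0$ is positive for $n$ large --- together with the convergence of the truncated variances to $\sigma^2+\int_{|x|\leq\delta}x^2\,\imf{\nu}{dx}$ and of the truncated means to the appropriate centering constant; a killing term $-\kappa$ in $\Psi$ shows up as an extra mass $\kappa$ of $\floor{a_n}\proba{\xi^n/n\in\cdot}$ escaping to $+\infty$. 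Then one computes $\floor{a_n}\esp{e^{-\lambda\xi^n/n}-1}$ directly, splitting according to $|\xi^n/n|\leq\delta$ or $|\xi^n/n|>\delta$: on the first event use $e^{-\lambda x}-1+\lambda x\indi{|x|\leq\delta}=\imf{O}{x^2}$ together with the convergence of truncated means and variances; on the second observe that, for $n$ large, $\set{\xi^n/n<-\delta}$ is empty (because $\xi^n/n\geq -1/n$), so the integral runs over $x>\delta$ only, where $e^{-\lambda x}-1$ is bounded and continuous, and it converges to $\int_{(\delta,\infty)}\paren{e^{-\lambda x}-1}\,\imf{\nu}{dx}-\kappa$. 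Letting $\delta\downarrow 0$ recombines the pieces into the L\'evy--Khintchine expression for $\imf{\Psi}{\lambda}$.

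The step I expect to be the bottleneck is this last one: controlling the small-jump remainder uniformly in $n$ while sending $n\to\infty$ and then $\delta\downarrow 0$, and matching the centering/drift (and killing) contributions so that the limit is exactly $\imf{\Psi}{\lambda}$ and not $\imf{\Psi}{\lambda}$ modulo a linear term. This is entirely classical --- it is the very computation proving convergence of the array, carried out with $e^{-\lambda x}$ in place of $e^{i\theta x}$ --- and it goes through precisely because the steps are bounded below, so the region where $e^{-\lambda x}$ blows up is never charged by $\mu_n$. The standing hypothesis that $X$ is not a subordinator merely places us in the genuinely downward-fluctuating regime (if $\mu_n$ were supported on $\set{0,1,\dots}$ along a subsequence, $\mu$ would be the law of a subordinator) and is not otherwise used in this argument. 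As a consistency check, the lower bound $\liminf_n\esp{e^{-\lambda X^n_{a_n}/n}}\geq e^{\imf{\Psi}{\lambda}}$ comes for free: it is Fatou's lemma (portmanteau for the lower semicontinuous map $e^{-\lambda\,\cdot\,}$) applied to $X^n_{a_n}/n\Rightarrow\mu$, so only the matching upper bound is genuinely at stake.
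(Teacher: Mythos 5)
Your proof is correct in outline, but it is a genuinely different argument from the one in the paper. You reduce the lemma to the classical Gnedenko--Kolmogorov theory of null triangular arrays: extract the vague convergence $\floor{a_n}\,\proba{\xi^n/n\in\cdot}\to\nu$ together with convergence of truncated means and variances from the hypothesis $X^n_{a_n}/n\Rightarrow\mu$, then compute $\floor{a_n}\paren{\imf{G_n}{\lambda/n}-1}$ directly, the bound $\xi^n\geq-1$ guaranteeing that the region where $e^{-\lambda x}$ blows up is never charged. This is essentially Grimvall's original proof of the statement (the paper cites it as Theorem 2.1 of that reference and explicitly sets out to give a different one). The paper instead argues by fluctuation theory: it applies optional sampling to the exponential martingale $e^{-\lambda X^n_m}\imf{G_n}{\lambda}^{-m}$ at the first passage time of $-k$, obtains $\esp{s^{T^n_k}}=e^{-k\imf{F_n}{s}}$, identifies the law of the running minimum up to an independent geometric (resp.\ exponential) time as geometric (resp.\ exponential with parameter $\imf{\tilde\Phi}{\lambda}$), and deduces $n\imf{F_n}{e^{-\lambda/a_n}}\to\imf{\tilde\Phi}{\lambda}$ from the Skorohod convergence $S^n_{a_n}X^n\to X$ and continuity of the minimum functional; inverting gives the claim for $\lambda>\imf{\tilde\Phi}{0}$, and an $L_p$-boundedness bootstrap extends it to all $\lambda>0$. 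What each approach buys: yours is self-contained at the level of one-dimensional limit theory and, as you note, does not actually need the standing assumption that $X$ is not a subordinator (the paper's proof uses it to ensure $\proba{\xi^n=-1}>0$ so that $F_n$ and the first passage times are well defined); the cost is the bookkeeping you yourself flag --- verifying the null-array/infinitesimality hypothesis before invoking the criterion, and matching the truncation-dependent centering constants so the limit is exactly $\imf{\Psi}{\lambda}$ with the paper's normalization of the L\'evy--Khintchine formula. The paper's route trades that analysis for exact distributional identities made available by spectral positivity, and reuses machinery (Skorohod representation, continuity of the infimum) already deployed elsewhere in the paper. Both are sound; yours should be written up with the infinitesimality check and the truncation conventions made explicit.
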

This is the content of Theorem 2.1 of \citet{MR0362529}; we
present a
proof using basic fluctuation theory for independent increment processes.
\begin{pf*}{Proof of Lemma~\ref{ConvergenceOfGeneratingFunctionsLemma}}
Using Skorohod's theorem again, we assume that $X^n_{\lfloor a_n\cdot
\rfloor}/n$ converges almost surely to $X$ in the Skorohod $J_1$ topology.
Also, enlarge the probability space so that it admits an exponential
random variable $R_\lambda$ of parameter $\lambda$ which is
independent of $X$ and $X^n$.

Let
\[
{G_n ( \lambda)}=\se\bigl( e^{-\lambda X^n_{1}} \bigr). %
\]
Since $X$ is not a subordinator, then $\sip( X^n_1=-1
)>0$ for large
enough $n$, and we can assume that this happens for every $n$. Hence,
${G_n ( \lambda)}\to\infty$ as $\lambda\to\infty$,
and we can define
\[
{F_n ( s )}=\inf\bigl\{ \lambda>0\dvtx{G_n ( \lambda)}>1/s
\bigr\} \qquad\mbox{for $s\in(0,1]$.}
\]
Using optional sampling at the first time $T^n_k$ at which $X^n$
reaches $-k$ for the first time, applied to the martingale
\[
e^{-\lambda X^n_m}{G_n ( \lambda)}^{-m},
\]
we obtain
\[
\se\bigl( s^{T^n_k} \bigr)=e^{-k{F_n ( s )}}
\]
for $s\in(0,1]$. Define the random variables
\[
I_\lambda=\min_{s\leq R_\lambda} X_s \quad\mbox{and}\quad
I^n_\lambda=\min_{s\leq R_\lambda} \frac{1}{n}X^n_{\lfloor a_n
s\rfloor}.
\]
Since $\lfloor a_nR_\lambda\rfloor$ has a geometric distribution of
parameter $e^{-\lambda/a_n}$, it follows that
\[
\sip\bigl( -nI^n_\lambda\geq k \bigr)=\sip\bigl(
T^n_k<\lfloor a_n R_\lambda\rfloor
\bigr)=\se\bigl( e^{-\lambda/a_n T^n_k} \bigr)=e^{-k{F_n ( e^{-\lambda
/ a_n} )}}
\]
so that $-nI^n_\lambda$ has a geometric distribution. Also, from
Corollary 2 in Bertoin
[(\citeyear{MR1406564}), Chapter VII], $I_\lambda$ has an
exponential distribution of parameter ${\tilde\Phi( \lambda
)}$ where
\[
{\tilde\Phi( \lambda)}=\inf\bigl\{ \tilde\lambda>0\dvtx{\Psi( \tilde
\lambda)}>
\lambda\bigr\}.
\]
However, since $X$ does not jump almost surely at $R_\lambda$ and the
minimum is a continuous functional on Skorohod space (on the interval
$[0,R_\lambda]$), we see that $I^n_\lambda$ converges weakly to
$I_\lambda$. This implies
\[
n{F_n \bigl( e^{-\lambda/a_n} \bigr)}\to{\tilde\Phi( \lambda)},
\]
and by passing to inverses, we get
\[
{G_n ( \lambda/n )}^{a_n}\to e^{{\Psi( \lambda
)}}
\]
for $\lambda>{\tilde\Phi( 0 )}$.

Finally, if $\lambda\in(0,{\tilde\Phi( 0 )}]$, pick
$p>1$ such
that $p\lambda>{\tilde\Phi( 0 )}$; we have just proved
that the sequence
\[
{G_n ( p\lambda/n )}^{a_n},\qquad n\geq1,
\]
and being convergent, it is bounded. Hence the sequence
\[
e^{-\lambda/n X^n_{a_n}},\qquad n\geq1,
\]
is bounded in $L_p$ and converges weakly to $e^{-\lambda X_1}$. We then get
\[
{G_n ( \lambda/n )}^{a_n}=\se\bigl( e^{-\lambda/n X^n_{a_n}}
\bigr)\to\se\bigl( e^{-\lambda X_1} \bigr)=e^{{\Psi(
\lambda)}}.
\]
\upqed\end{pf*}
%
\subsection{A limit theorem for conditioned Galton--Watson processes}

\mbox{}

\begin{pf*}{Proof of Theorem~\ref{ExtensionOfPitmansTheorem}}
Let $Z^{n}$ be a Galton--Watson process with critical offspring law
$\mu$ such that $Z^{n}_0=k_n$ and is conditioned on $\sum_{i=1}^\infty
Z^{n}_i=n$. Then, $Z^{n}$ has the law of the discrete
Lamperti transformation of the $n$ steps of a random walk with jump
distribution $\tilde\mu$ (the shifted reproduction law) which starts
at $0$ and is conditioned to reach $-k_n$ in $n$ steps; call the latter
process $X^{n}$, so that
\[
Z^{n}={h^1 \bigl( k_n+X^{n},0
\bigr)}.\vadjust{\goodbreak}
\]

Thanks to \citet{MR2534486}, if $k_n/a_n\to l$, then
\[
S_{n}^{a_n}X^{n}\to F^l.
\]
Thanks to Lemma~\ref{ScalingAndDiscretizationLemma}, we see that
\[
S_{n/a_n}^{a_n}Z^{n}={h^{a_n/n} \bigl(
S_{n}^{a_n}X^{n},0 \bigr)}.
\]
Let $\alpha\in(1,2]$ be the index of the stable process in the
statement of Theorem~\ref{ExtensionOfPitmansTheorem}, and recall that
$a_n$ is of the form $n^{1/\alpha}{L ( n )}$ where $L$ is
a slowly
varying function, so that $a_n={o ( n )}$. Since $F^l$ is
absorbed at
zero [as is easily seen by the pathwise construction of $F^l$ by
\citet{MR2534486}, Theorem~4.3], then Proposition~\ref
{UniquenessForIVPWithInequalitiesProposition} guarantees that the
Lamperti transform $Z$ of $F^l$ is the unique process which satisfies
\[
\int_s^t F^l_{\int_0^r Z_u \,du-}\leq
\int_s^t Z_r \,dr\leq\int
_s^t F^l_{\int_0^r Z_u \,du}.
\]
Theorem~\ref{StabilityTheorem} implies that
\[
S_{n/a_n}^{a_n}Z^{n}\to Z.
\]
\upqed\end{pf*}

\section*{Acknowledgments}

G. Uribe Bravo would like to thank Jim Pitman for his constant
encouragement as a postdoctoral supervisor and stimulating
conversations around conditioned Galton--Watson processes. We would
like to thank the referee for a conscientious and detailed analysis of
our work which helped us remove an important number of misprints and
clarify some obscure points.


%

\printaddresses

\end{document}